
\documentclass[fleqn,preprint,3p,a4paper]{elsarticle}




\usepackage{amssymb}
\usepackage{amsmath}
\usepackage{amsthm}
\usepackage{dcolumn}
\usepackage{endnotes}
\usepackage{tabularx}
\usepackage[matrix,arrow]{xy}
\usepackage{wasysym}

\newtheorem{theorem}{Theorem}[section]
\newtheorem{proposition}[theorem]{Proposition}
\newtheorem{lemma}[theorem]{Lemma}
\newtheorem{corollary}[theorem]{Corollary}

\theoremstyle{definition}
\newtheorem{definition}[theorem]{Definition}
\newtheorem{example}[theorem]{Example}
\newtheorem{remark}[theorem]{Remark}

\newtheorem{problem}[theorem]{Problem}



\newcommand{\ir}{{\mathsf{Irr}}}

\newcommand{\mn}{\mathbb N}

\newcommand{\ii}{{\rm int}}

\newcommand{\ua}{\mathord{\uparrow}}
\newcommand{\da}{\mathord{\downarrow}}

\newcommand{\mk}{\mathord{\mathsf{K}}}
\newcommand{\wdd}{\mathord{\mathsf{WD}}}

\begin{document}

\begin{frontmatter}



\title{$\omega$-Rudin spaces, well-filtered determined spaces and first-countable spaces\tnoteref{t1}}
\tnotetext[t1]{This research was supported by the National Natural Science Foundation of China (Nos. 11661057, 11361028,
61300153, 11671008, 11701500, 11626207); the Natural Science Foundation of Jiangxi Province, China (No. 20192ACBL20045); NSF Project of Jiangsu Province,
China (BK20170483); and NIE ACRF (RI 3/16 ZDS), Singapore}

\author[X. Xu]{Xiaoquan Xu\corref{mycorrespondingauthor}}
\cortext[mycorrespondingauthor]{Corresponding author}
\ead{xiqxu2002@163.com}
\address[X. Xu]{School of Mathematics and Statistics,
Minnan Normal University, Zhangzhou 363000, China}
\author[C. Shen]{Chong Shen}
\address[C. Shen]{School of Mathematical Sciences, Nanjing Normal University, Nanjing 210046, China}
\ead{shenchong0520@163.com}
\author[X. Xi]{Xiaoyong Xi}
\ead{littlebrook@jsnu.edu.cn}
\address[X. Xi]{School of mathematics and Statistics,
Jiangsu Normal University, Xuzhou 221116, China}
\author[D. Zhao]{Dongsheng Zhao}
\address[D. Zhao]{Mathematics and Mathematics Education,
National Institute of Education Singapore, \\
Nanyang Technological University,
1 Nanyang Walk, Singapore 637616}
\ead{dongsheng.zhao@nie.edu.sg}

\begin{abstract}
  We investigate some versions of $d$-space, well-filtered space and Rudin space concerning various countability properties. The main results include: (i) if the sobrification of a $T_0$ space $X$ is first-countable, then $X$ is an $\omega$-Rudin space; (ii) every $\omega$-well-filtered space is sober if its sobrification is first-countable; (iii) if a $T_0$ space is second-countable or first-countable and with a countable underlying set, then it is a $\omega$-Rudin space; (iv) every first-countable $T_0$ space is well-filtered determined; (v) every irreducible closed subset in a first-countable $\omega$-well-filtered space is countably-directed; (vi) every first-countable $\omega$-well-filtered $\omega^\ast$-$d$-space is sober.
\end{abstract}

\begin{keyword}
First-countability; Sober space; $\omega$-Rudin space; $\omega$-Well-filtered space; $\omega^\ast$-$d$-Space; Countably-directed set

\MSC 54D99; 54A25; 54B20; 06F30

\end{keyword}




\end{frontmatter}


\section{Introduction}

In \cite{Shenchon, xu-shen-xi-zhao2, xu-shen-xi-zhao1}, we introduced and studied the Rudin spaces, well-filtered determined spaces and $\omega$-well-filtered spaces. Some relationships and links  among these new non-Hausdorff topological properties and the well studied sobriety and well-filteredness were uncovered. In \cite{xu-shen-xi-zhao1}, it was proved that in a first-countable $\omega$-well-filtered space $X$, every irreducible closed subset of $X$ is directed under the specialization order of $X$. It follows immediately that every first-countable $\omega$-well-filtered $d$-space is sober.

In the current paper, we continue studying some aspects of $d$-space, well-filtered space and Rudin spaces concerning countability. Employing countably-directed sets, we define the $\omega^\ast$-$d$-spaces and $\omega^\ast$-well-filtered spaces. It is proved that if the sobrification of a $T_0$ space $X$ is first-countable, then $X$ is an $\omega$-Rudin space. Therefore, every $\omega$-well-filtered space is sober if it has a first countable sobrification. From these, we obtain that if a $T_0$ space $X$ is second-countable or first-countable with a countable underlying set, then $X$ is an $\omega$-Rudin space, and $X$ is sober if it is additionally $\omega$-well-filtered. Another major result obtained is that every first-countable $T_0$ space is well-filtered determined. In each first-countable $\omega$-well-filtered space, every irreducible closed subset is proved to be countably-directed, hence every first-countable $\omega$-well-filtered $\omega^\ast$-$d$-space is sober. We also prove that a $T_0$ space $Y$ is $\omega^\ast$-well-filtered iff its Smyth power space is $\omega^\ast$-well-filtered iff its Smyth power space is an $\omega^\ast$-$d$-space. The work presented here enriched the theory of non-Hausdorff topological spaces and lead to some nontrivial open problems for further investigation.

\section{Preliminary}

In this section, we briefly recall some fundamental concepts and notations that will be used in the paper. Some basic properties of irreducible sets and compact saturated sets are presented.

For a poset $P$ and $A\subseteq P$, let
$\mathord{\downarrow}A=\{x\in P: x\leq  a \mbox{ for some }
a\in A\}$ and $\mathord{\uparrow}A=\{x\in P: x\geq  a \mbox{
	for some } a\in A\}$. For  $x\in P$, we write
$\mathord{\downarrow}x$ for $\mathord{\downarrow}\{x\}$ and
$\mathord{\uparrow}x$ for $\mathord{\uparrow}\{x\}$. A subset $A$
is called a \emph{lower set} (resp., an \emph{upper set}) if
$A=\mathord{\downarrow}A$ (resp., $A=\mathord{\uparrow}A$). Let $P^{(<\omega)}=\{F\subseteq P : F \mbox{~is a nonempty finite set}\}$. For a set $X$, $|X|$ will denote the cardinality  of $X$. Let $\mn$ denotes the set of all natural numbers with the usual order and $\omega=|\mn|$.

A nonempty subset $D$ of a poset $P$ is \emph{directed} if every two
elements in $D$ have an upper bound in $D$. The set of all directed sets of $P$ is denoted by $\mathcal D(P)$. A subset $I\subseteq P$ is called an \emph{ideal} of $P$ if $I$ is a directed lower subset of $P$. Let $\mathrm{Id} (P)$ be the poset (with the order of set inclusion) of all ideals of $P$. Dually, we define the notion of \emph{filters} and denote the poset of all filters of $P$ by $\mathrm{Filt}(P)$.  A poset $P$ is called a
\emph{directed complete poset}, or \emph{dcpo} for short, if for any
$D\in \mathcal D(P)$, $\bigvee D$ exists in $P$.

As in \cite{redbook}, the \emph{upper topology} on a poset $Q$, generated
by the complements of the principal ideals of $Q$, is denoted by $\upsilon (Q)$. A subset $U$ of $Q$ is \emph{Scott open} if
(i) $U=\mathord{\uparrow}U$ and (ii) for any directed subset $D\subseteq Q$ with
$\bigvee D$ existing, $\bigvee D\in U$ implies $D\cap
U\neq\emptyset$. All Scott open subsets of $Q$ form a topology,
called the \emph{Scott topology} on $Q$ and
denoted by $\sigma(Q)$. The space $\Sigma~\!\! Q=(Q,\sigma(Q))$ is called the
\emph{Scott space} of $Q$. The upper sets of $Q$ form the (\emph{upper}) \emph{Alexandroff topology} $\alpha (Q)$.

For a $T_0$ space $X$, we use $\leq_X$ to denote the \emph{specialization order} on $X$: $x\leq_X y$ if{}f $x\in \overline{\{y\}}$). In the following, when a $T_0$ space $X$ is considered as a poset, the partial order always means the specialization order provided otherwise indicated. Let $\mathcal O(X)$ (resp., $\mathcal C(X)$) be the set of all open subsets (resp., closed subsets) of $X$, and let $\mathcal S^u(X)=\{\ua x : x\in X\}$. Let $\mathcal S_c(X)=\{\overline{{\{x\}}} : x\in X\}$ and $\mathcal D_c(X)=\{\overline{D} : D\in \mathcal D(X)\}$.

A nonempty subset $A$ of $X$ is \emph{irreducible} if for any $\{F_1, F_2\}\subseteq \mathcal C(X)$, $A \subseteq F_1\cup F_2$ implies $A \subseteq F_1$ or $A \subseteq  F_2$.  Denote by $\ir(X)$ (resp., $\ir_c(X)$) the set of all irreducible (resp., irreducible closed) subsets of $X$. Every directed subset of $X$ is irreducible. $X$ is called \emph{sober}, if for any  $F\in\ir_c(X)$, there is a unique point $a\in X$ such that $F=\overline{\{a\}}$.

\begin{remark}\label{A is closure of x then sup A is x} In a $T_0$ space $X$, if $x\in X$ and $A\subseteq X$ such that $\overline{A}=\overline{\{x\}}$, then $\bigvee A$ exists in $X$ and $x=\bigvee A$.
\end{remark}

The following two lemmas on irreducible sets are well-known.

\begin{lemma}\label{irrsubspace}
Let $X$ be a space and $Y$ a subspace of $X$. Then the following conditions are equivalent for a
subset $A\subseteq Y$:
\begin{enumerate}[\rm (1)]
	\item $A$ is an irreducible subset of $Y$.
	\item $A$ is an irreducible subset of $X$.
	\item ${\rm cl}_X A$ is an irreducible subset of $X$.
\end{enumerate}
\end{lemma}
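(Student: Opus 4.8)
The plan is to establish the cycle of implications $(1)\Rightarrow(2)\Rightarrow(3)\Rightarrow(1)$, relying on two elementary facts: that the closed subsets of the subspace $Y$ are exactly the sets $F\cap Y$ with $F\in\mathcal C(X)$, and that in any topological space the closure of a finite union of sets is the union of their closures, so that the two-closed-set covering condition defining irreducibility behaves well under closure. Throughout, recall that an irreducible set is required to be nonempty, a condition that transfers between $A$ and $\cl_X A$ without comment.

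First I would prove $(1)\Rightarrow(2)$: assume $A$ is irreducible in $Y$ and let $F_1,F_2\in\mathcal C(X)$ satisfy $A\subseteq F_1\cup F_2$; intersecting with $Y$ and using $A\subseteq Y$ gives $A\subseteq (F_1\cap Y)\cup(F_2\cap Y)$, a cover of $A$ by two closed subsets of $Y$, so irreducibility in $Y$ yields $A\subseteq F_i\cap Y\subseteq F_i$ for some $i$. Next, for $(2)\Rightarrow(3)$, assume $A$ is irreducible in $X$ and let $F_1,F_2\in\mathcal C(X)$ with $\cl_X A\subseteq F_1\cup F_2$; then $A\subseteq F_1\cup F_2$, so $A\subseteq F_i$ for some $i$, and taking closures gives $\cl_X A\subseteq \cl_X F_i=F_i$. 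Finally, for $(3)\Rightarrow(1)$, assume $\cl_X A$ is irreducible in $X$; given closed subsets $C_1,C_2$ of $Y$ with $A\subseteq C_1\cup C_2$, write $C_i=F_i\cap Y$ with $F_i\in\mathcal C(X)$, so $A\subseteq F_1\cup F_2$ and hence $\cl_X A\subseteq F_1\cup F_2$; irreducibility of $\cl_X A$ gives $\cl_X A\subseteq F_i$ for some $i$, whence $A\subseteq F_i\cap Y=C_i$.

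There is no genuine obstacle in this argument; the only thing to watch is the translation back and forth between closed sets of $X$ and closed sets of $Y$ via the operation $F\mapsto F\cap Y$, together with the nonemptiness clause in the definition of irreducibility. One could equally well present the proof as the two separate equivalences $(1)\Leftrightarrow(2)$ and $(2)\Leftrightarrow(3)$, but the single cycle above is the most economical route.
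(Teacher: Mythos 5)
Your proof is correct. The paper states this lemma without proof, labelling it as well-known, so there is no argument in the text to compare against; your cycle $(1)\Rightarrow(2)\Rightarrow(3)\Rightarrow(1)$, using the correspondence $C\mapsto F\cap Y$ between closed sets of $Y$ and traces of closed sets of $X$, is exactly the standard argument one would supply, and each implication checks out (including the tacit transfer of nonemptiness between $A$ and $\cl_X A$).
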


\begin{lemma}\label{irrimage}
	If $f : X \longrightarrow Y$ is continuous and $A\in\ir (X)$, then $f(A)\in \ir (Y)$.
\end{lemma}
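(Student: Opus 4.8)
The plan is to argue directly from the definition of irreducibility, transporting a cover of $f(A)$ by closed sets back along $f$ and invoking continuity. First I would note that $f(A)$ is nonempty since $A$ is. To verify the defining condition, I would fix an arbitrary pair $\{F_1,F_2\}\subseteq\mathcal C(Y)$ with $f(A)\subseteq F_1\cup F_2$ and aim to show that $f(A)\subseteq F_1$ or $f(A)\subseteq F_2$.

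The key step will be the pullback: from $f(A)\subseteq F_1\cup F_2$ one obtains $A\subseteq f^{-1}(F_1\cup F_2)=f^{-1}(F_1)\cup f^{-1}(F_2)$, and continuity of $f$ makes both $f^{-1}(F_1)$ and $f^{-1}(F_2)$ closed in $X$. Since $A\in\ir(X)$, irreducibility then gives $A\subseteq f^{-1}(F_1)$ or $A\subseteq f^{-1}(F_2)$; applying $f$ and using $f(f^{-1}(F_i))\subseteq F_i$ yields $f(A)\subseteq F_1$ or $f(A)\subseteq F_2$, as required. Hence $f(A)\in\ir(Y)$.

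I do not expect any real obstacle here: the argument is essentially a one-line reduction, and the only facts that need invoking are the elementary ones that preimages of closed sets under a continuous map are closed and that $f(f^{-1}(F))\subseteq F$. As an alternative one could first apply Lemma~\ref{irrsubspace} to pass to the subspace $\overline{f(A)}$ of $Y$, but this would not shorten the argument.
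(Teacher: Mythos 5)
Your argument is correct and is exactly the standard proof of this well-known fact: pulling the closed cover back along $f$, using continuity to get closedness of the preimages, applying irreducibility of $A$, and pushing forward via $f(f^{-1}(F_i))\subseteq F_i$. The paper states this lemma without proof (as well known), and your one-line reduction is precisely the intended argument, including the trivial nonemptiness check.
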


A $T_0$ space $X$ is called a \emph{d-space} (or \emph{monotone convergence space}) if $X$ (with the specialization order) is a dcpo
 and $\mathcal O(X) \subseteq \sigma(X)$ (cf. \cite{redbook, Wyler}).

\begin{definition}\label{DCspace} (\cite{xu-shen-xi-zhao1})
	A $T_0$ space $X$ is called a \emph{directed closure space}, $\mathsf{DC}$ \emph{space} for short, if $\ir_c(X)=\mathcal{D}_c(X)$, that is, for each $A\in \ir_c(X)$, there exists a directed subset of $X$ such that $A=\overline{D}$.
\end{definition}

For any topological space $X$, $\mathcal G\subseteq 2^{X}$ and $A\subseteq X$, let $\Diamond_{\mathcal G} A=\{G\in \mathcal G : G\bigcap A\neq\emptyset\}$ and $\Box_{\mathcal G} A=\{G\in \mathcal G : G\subseteq  A\}$. The symbols $\Diamond_{\mathcal G} A$ and $\Box_{\mathcal G} A$ will be simply written as $\Diamond A$  and $\Box A$ respectively if no ambiguity occurs. The \emph{lower Vietoris topology} on $\mathcal{G}$ is the topology that has $\{\Diamond U : U\in \mathcal O(X)\}$ as a subbase, and the resulting space is denoted by $P_H(\mathcal{G})$. If $\mathcal{G}\subseteq \ir (X)$, then $\{\Diamond_{\mathcal{G}} U : U\in \mathcal O(X)\}$ is a topology on $\mathcal{G}$. The space $P_H(\mathcal{C}(X)\setminus \{\emptyset\})$ is called the \emph{Hoare power space} or \emph{lower space} of $X$ and is denoted by $P_H(X)$ for short (cf. \cite{Schalk}). Clearly, $P_H(X)=(\mathcal{C}(X)\setminus \{\emptyset\}, \upsilon(\mathcal{C}(X)\setminus \{\emptyset\}))$. So $P_H(X)$ is always sober (see \cite[Corollary 4.10]{ZhaoHo} or \cite[Proposition 2.9]{xu-shen-xi-zhao1}). The \emph{upper Vietoris topology} on $\mathcal{G}$ is the topology that has $\{\Box_{\mathcal{G}} U : U\in \mathcal O(X)\}$ as a base, and the resulting space is denoted by $P_S(\mathcal{G})$.

\begin{remark} \label{eta continuous} Let $X$ be a $T_0$ space.
\begin{enumerate}[\rm (1)]
	\item If $\mathcal{S}_c(X)\subseteq \mathcal{G}$, then the specialization order on $P_H(\mathcal{G})$ is the set inclusion order, and the \emph{canonical mapping} $\eta_{X}: X\longrightarrow P_H(\mathcal{G})$, given by $\eta_X(x)=\overline {\{x\}}$, is an order and topological embedding (cf. \cite{redbook, Jean-2013, Schalk}).
    \item The space $X^s=P_H(\ir_c(X))$ with the canonical mapping $\eta_{X}: X\longrightarrow X^s$ is the \emph{sobrification} of $X$ (cf. \cite{redbook, Jean-2013}).
\end{enumerate}
\end{remark}

 A subset $A$ of a space $X$ is called \emph{saturated} if $A$ equals the intersection of all open sets containing it (equivalently, $A$ is an upper set in the specialization order). We shall use $\mathord{\mathsf{K}}(X)$ to
denote the set of all nonempty compact saturated subsets of $X$ and endow it with the \emph{Smyth preorder}, that is, for $K_1,K_2\in \mathord{\mathsf{K}}(X)$, $K_1\sqsubseteq K_2$ if{}f $K_2\subseteq K_1$. The space $P_S(\mathord{\mathsf{K}}(X))$, denoted shortly by $P_S(X)$, is called the \emph{Smyth power space} or \emph{upper space} of $X$ (cf. \cite{Heckmann, Schalk}). It is easy to verify that the specialization order on $P_S(X)$ is the Smyth order (that is, $\leq_{P_S(X)}=\sqsubseteq$). The \emph{canonical mapping} $\xi_X: X\longrightarrow P_S(X)$, $x\mapsto\ua x$, is an order and topological embedding (cf. \cite{Heckmann, Klause-Heckmann, Schalk}). Clearly, $P_S(\mathcal S^u(X))$ is a subspace of $P_S(X)$ and $X$ is homeomorphic to $P_S(\mathcal S^u(X))$.

\begin{lemma}\label{sups in Smyth}\emph{(\cite{redbook})}  Let $X$ be a $T_0$ space. For any nonempty family $\{K_i : i\in I\}\subseteq \mk (X)$, $\bigvee_{i\in I} K_i$ exists in $\mk (X)$ if{}f~$\bigcap_{i\in I} K_i\in \mk (X)$. In this case $\bigvee_{i\in I} K_i=\bigcap_{i\in I} K_i$.
\end{lemma}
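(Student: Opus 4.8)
The plan is to translate everything through the Smyth preorder and thereby reduce the statement to a near-triviality about set inclusion. Recall that $K_1 \sqsubseteq K_2$ means $K_2 \subseteq K_1$; hence a member $K$ of $\mk(X)$ is an upper bound of $\{K_i : i\in I\}$ in $\mk(X)$ exactly when $K \subseteq K_i$ for all $i$, i.e.\ $K \subseteq \bigcap_{i\in I}K_i$, and being the \emph{least} upper bound means being the set-theoretically \emph{largest} such $K$. With this dictionary in hand, both implications become short.

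For the easy direction I would assume $K := \bigcap_{i\in I} K_i$ belongs to $\mk(X)$ (nonempty, compact, saturated). Then $K$ is an upper bound of the family by the dictionary above, and any other upper bound $L \in \mk(X)$ satisfies $L \subseteq K_i$ for every $i$, hence $L \subseteq K$, i.e.\ $K \sqsubseteq L$. So $K$ is the least upper bound and $\bigvee_{i\in I} K_i = K = \bigcap_{i\in I} K_i$.

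For the converse, suppose $K := \bigvee_{i\in I} K_i$ exists in $\mk(X)$. As an upper bound, $K \subseteq K_i$ for every $i$, hence $K \subseteq \bigcap_{i\in I} K_i$, and the task is to establish the reverse inclusion. Here I would invoke the one fact that requires a (tiny) topological argument: for each $x \in X$ the set $\ua x$ lies in $\mk(X)$, since $\ua x$ is upper (hence saturated) and any open cover of $\ua x$ contains an open -- therefore saturated -- set through $x$, which already engulfs all of $\ua x$. Then for any $x \in \bigcap_{i\in I} K_i$, the saturation of each $K_i$ gives $\ua x \subseteq K_i$, so $\ua x$ is an upper bound of $\{K_i : i \in I\}$; minimality of $K$ forces $\ua x \subseteq K$, in particular $x \in K$. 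Hence $\bigcap_{i\in I} K_i \subseteq K$, so $\bigcap_{i\in I} K_i = K \in \mk(X)$ (and it is nonempty, being equal to $K$).

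I do not expect any genuine obstacle: the only substantive point is the observation that $\ua x$ is compact, so that it can serve as a witnessing upper bound in the argument above, and that is immediate from the fact that members of an open cover are upward closed. Everything else is bookkeeping with the Smyth order, and the equality $\bigvee_{i\in I}K_i=\bigcap_{i\in I}K_i$ falls out of either half of the proof.
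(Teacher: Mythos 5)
Your proof is correct. The paper itself gives no proof of this lemma (it is quoted from \cite{redbook}), and your argument is the standard one: the only non-bookkeeping step is that $\ua x\in\mk(X)$ for each $x$ in the intersection, which then serves as an upper bound forcing $\ua x\subseteq\bigvee_{i\in I}K_i$ and hence $\bigcap_{i\in I}K_i\subseteq\bigvee_{i\in I}K_i$; both directions and the identification of the sup with the intersection follow exactly as you describe.
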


\begin{lemma}\label{K union} \emph{(\cite{jia-Jung-2016, Schalk})}  Let $X$ be a $T_0$ space.
\begin{enumerate}[\rm (1)]
\item If $\mathcal K\in\mk(P_S(X))$, then  $\bigcup \mathcal K\in\mk(X)$.
\item The mapping $\bigcup : P_S(P_S(X)) \longrightarrow P_S(X)$, $\mathcal K\mapsto \bigcup \mathcal K$, is continuous.
\end{enumerate}
\end{lemma}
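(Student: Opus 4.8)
The plan is to prove the two parts separately; the only point that needs genuine care is the compactness assertion in~(1), for which one runs a two-level compactness argument, whereas all the remaining claims are immediate from the definitions.

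\emph{Part~(1).} Fix $\mathcal K\in\mk(P_S(X))$ and put $K_0=\bigcup\mathcal K=\bigcup_{K\in\mathcal K}K$. Note first that $\mathcal K\subseteq\mk(X)$, so every $K\in\mathcal K$ is a nonempty compact saturated subset of $X$. Since $\mathcal K\neq\emptyset$, $K_0\neq\emptyset$. For saturation I argue directly: if $x\in K_0$ and $x\leq_X y$, then $x\in K$ for some $K\in\mathcal K$, and since $K$ is an upper set in $X$ we get $y\in K\subseteq K_0$; hence $K_0$ is an upper set, i.e.\ saturated. The real work is compactness. Suppose $K_0\subseteq\bigcup_{i\in I}U_i$ with each $U_i\in\mathcal O(X)$. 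For each $K\in\mathcal K$ we have $K\subseteq K_0\subseteq\bigcup_{i\in I}U_i$, so by compactness of $K$ there is a finite $F_K\subseteq I$ with $K\subseteq V_K:=\bigcup_{i\in F_K}U_i\in\mathcal O(X)$; thus $K\in\Box_{\mk(X)}V_K$, a basic open subset of $P_S(X)$. Then $\{\Box_{\mk(X)}V_K : K\in\mathcal K\}$ is an open cover of $\mathcal K$ in $P_S(X)$, so by compactness of $\mathcal K$ there are $K_1,\dots,K_n\in\mathcal K$ with $\mathcal K\subseteq\bigcup_{j=1}^{n}\Box_{\mk(X)}V_{K_j}$. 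Consequently every $K\in\mathcal K$ is contained in some $V_{K_j}$, whence $K_0=\bigcup\mathcal K\subseteq\bigcup_{j=1}^{n}V_{K_j}$; since $\bigcup_{j=1}^{n}F_{K_j}$ is finite, this is a finite subcover of the original cover. Hence $K_0\in\mk(X)$.

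\emph{Part~(2).} By~(1) the map $\bigcup\colon P_S(P_S(X))\longrightarrow P_S(X)$ is well defined. The sets $\Box_{\mk(X)}U$ with $U\in\mathcal O(X)$ form a base of $P_S(X)$ (they are closed under finite intersections since $\Box_{\mk(X)}U\cap\Box_{\mk(X)}V=\Box_{\mk(X)}(U\cap V)$), so it suffices to show that the $\bigcup$-preimage of each such set is open. For $\mathcal K\in\mk(P_S(X))$ we have $\bigcup\mathcal K\subseteq U$ iff $K\subseteq U$ for every $K\in\mathcal K$, i.e.\ iff $\mathcal K\subseteq\Box_{\mk(X)}U$; therefore the preimage of $\Box_{\mk(X)}U$ under $\bigcup$ is exactly $\Box_{\mk(P_S(X))}(\Box_{\mk(X)}U)$, which is a basic open subset of $P_S(P_S(X))$ because $\Box_{\mk(X)}U\in\mathcal O(P_S(X))$. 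Hence $\bigcup$ is continuous.

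\emph{Main obstacle.} The step that requires attention is the compactness argument in~(1): for each $K\in\mathcal K$ one must bundle finitely many of the $U_i$ into a single open set $V_K$ so as to convert the given cover of $K_0$ in $X$ into a cover of $\mathcal K$ in $P_S(X)$, and then, after extracting a finite subcover of $\mathcal K$, unbundle to recover a finite subcover of $K_0$. The nonemptiness and saturation claims, as well as all of~(2), are a routine unwinding of the definitions of the Smyth power space and the upper Vietoris topology.
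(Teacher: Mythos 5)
Your proof is correct. The paper states this lemma as a known result cited from the literature and gives no proof of its own; your argument (converting a cover of $\bigcup\mathcal K$ in $X$ into a cover of $\mathcal K$ in $P_S(X)$ by bundling finite subcovers into the basic opens $\Box V_K$, and identifying $\bigcup^{-1}(\Box U)=\Box\Box U$ for continuity) is exactly the standard one found in the cited sources, so there is nothing to reconcile.
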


A $T_0$ space $X$ is called \emph{well-filtered} if it is $T_0$, and for any open set $U$ and filtered family $\mathcal{K}\subseteq \mathord{\mathsf{K}}(X)$, $\bigcap\mathcal{K}{\subseteq} U$ implies $K{\subseteq} U$ for some $K{\in}\mathcal{K}$.

\begin{remark}\label{sober implies WF implies d-space} The following implications are well-known (cf. \cite{redbook}):

$$\mbox{sobriety $\Rightarrow$ well-filteredness $\Rightarrow$ $d$-space.}$$
\end{remark}

\section{Topological Rudin's Lemma, Rudin spaces and well-filtered determined spaces}

Rudin's Lemma is a useful tool in topology and plays a crucial role in domain theory (see [1, 3-9, 20-21, 23]). Heckmann and Keimel \cite{Klause-Heckmann} presented the following topological variant of Rudin's Lemma.

\begin{lemma}\label{t Rudin} \emph{(Topological Rudin's Lemma)} Let $X$ be a topological space and $\mathcal{A}$ an
irreducible subset of the Smyth power space $P_S(X)$. Then every closed set $C {\subseteq} X$  that
meets all members of $\mathcal{A}$ contains a minimal irreducible closed subset $A$ that still meets all
members of $\mathcal{A}$.
\end{lemma}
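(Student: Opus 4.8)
The plan is to imitate the classical proof of Rudin's Lemma: first produce the set $A$ by a Zorn's Lemma argument, and then verify its irreducibility using the hypothesis that $\mathcal{A}$ is irreducible in $P_S(X)$.

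First I would consider the family $\mathcal{F}$ of all closed subsets $D$ of $X$ with $D\subseteq C$ that meet every member of $\mathcal{A}$, partially ordered by reverse inclusion. It is nonempty since $C\in\mathcal{F}$ by hypothesis. To apply Zorn's Lemma I must check that every chain $\{D_i:i\in I\}$ in $(\mathcal{F},\supseteq)$ has an upper bound, i.e.\ that $D:=\bigcap_{i\in I}D_i$ again lies in $\mathcal{F}$. The set $D$ is obviously closed and contained in $C$, so the only issue is that $D$ meets each $K\in\mathcal{A}$; this is precisely where compactness of the members of $\mathcal{A}$ is used: since $K\in\mk(X)$ is compact, the filtered family $\{K\cap D_i:i\in I\}$ of nonempty closed subsets of $K$ has the finite intersection property, hence nonempty intersection $\bigcap_{i\in I}(K\cap D_i)=K\cap D$. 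Zorn's Lemma then yields a minimal element $A$ of $\mathcal{F}$, and $A\neq\emptyset$ because $\mathcal{A}\neq\emptyset$ (irreducible sets are nonempty) while $A$ meets each member of $\mathcal{A}$.

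The crux is to show that this minimal $A$ is irreducible. Suppose not: then $A\subseteq F_1\cup F_2$ with $F_1,F_2\in\mathcal{C}(X)$ but $A\not\subseteq F_1$ and $A\not\subseteq F_2$, so $A\cap F_1$ and $A\cap F_2$ are closed subsets of $C$ strictly below $A$. By minimality of $A$ neither of them lies in $\mathcal{F}$, so there are $K_1,K_2\in\mathcal{A}$ with $K_1\cap(A\cap F_1)=\emptyset$ and $K_2\cap(A\cap F_2)=\emptyset$. The key observation is then that for $i=1,2$ the set $\Box_{\mk(X)}(X\setminus(A\cap F_i))=\{K\in\mk(X):K\cap(A\cap F_i)=\emptyset\}$ is a basic open subset of $P_S(X)$ (since $A\cap F_i$ is closed in $X$) and it contains $K_i$, hence it meets $\mathcal{A}$. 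Irreducibility of $\mathcal{A}$ forces these two open sets to share a point of $\mathcal{A}$: some $K\in\mathcal{A}$ satisfies $K\cap(A\cap F_1)=\emptyset=K\cap(A\cap F_2)$, i.e.\ $K\cap A\cap(F_1\cup F_2)=\emptyset$; but $A\subseteq F_1\cup F_2$ then gives $K\cap A=\emptyset$, contradicting $A\in\mathcal{F}$. Hence $A$ is irreducible, and since any closed $A'\subsetneq A$ would lie in $\mathcal{F}$ and contradict minimality, $A$ is in fact minimal among all closed (in particular, among all irreducible closed) subsets of $C$ meeting every member of $\mathcal{A}$.

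I expect the Zorn step to be routine once one identifies compactness of the members of $\mathcal{A}$ as exactly what keeps descending intersections inside $\mathcal{F}$. The genuine obstacle is the irreducibility verification: one has to recognize that the assertion ``$K$ misses the fixed closed set $A\cap F_i$'' is, for a compact saturated $K$, the same as membership in a basic open set $\Box_{\mk(X)}(X\setminus(A\cap F_i))$ of the upper Vietoris topology, which is what lets the irreducibility of $\mathcal{A}$ in $P_S(X)$ be brought to bear.
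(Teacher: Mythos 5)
Your proof is correct and is essentially the standard argument for this lemma (the paper itself states it without proof, citing Heckmann--Keimel, whose Lemma~3.1 proceeds exactly as you do): Zorn's Lemma on the closed subsets of $C$ meeting all members of $\mathcal{A}$, with compactness of each $K\in\mathcal{A}$ securing the chain condition, and irreducibility of the minimal element obtained by translating ``$K$ misses the closed set $A\cap F_i$'' into membership in the open set $\Box(X\setminus(A\cap F_i))$ of $P_S(X)$ and invoking irreducibility of $\mathcal{A}$. No gaps.
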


Applying Lemma \ref{t Rudin} to the Alexandroff topology on a poset $P$, one obtains the original Rudin's Lemma (see \cite{Rudin}).

\begin{corollary}\label{rudin} \emph{(Rudin's Lemma)} Let $P$ be a poset, $C$ a nonempty lower subset of $P$ and $\mathcal F\in \mathbf{Fin} P$ a filtered family with $\mathcal F\subseteq\Diamond C$. Then there exists a directed subset $D$ of $C$ such that $\mathcal F\subseteq \Diamond\da D$.
\end{corollary}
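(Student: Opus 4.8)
The plan is to deduce the statement from the Topological Rudin's Lemma (Lemma~\ref{t Rudin}) by equipping $P$ with the Alexandroff topology $\alpha(P)$. First I would record the dictionary between the order-theoretic data on $P$ and the topological data on $(P,\alpha(P))$: the closed subsets of $(P,\alpha(P))$ are exactly the lower subsets of $P$; a subset is saturated iff it is an upper set; and a compact saturated set is necessarily finitely generated — cover such a $K$ by the open sets $\ua x$ with $x\in K$ and extract a finite subcover. Hence $\mk(P,\alpha(P))=\{\ua F : F\in P^{(<\omega)}\}=\mathbf{Fin}\,P$, with the Smyth order being reverse inclusion, so that a filtered family in $\mathbf{Fin}\,P$ (downward directed under $\subseteq$) is precisely a directed subset of $P_S(P,\alpha(P))$ in its specialization order.

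Next, since $\mathcal F$ is filtered, it is a directed subset of $P_S(P,\alpha(P))$, hence an irreducible subset of $P_S(P,\alpha(P))$ (every directed subset of a space is irreducible). Also $C$, being a nonempty lower set, is a nonempty closed subset of $(P,\alpha(P))$, and the hypothesis $\mathcal F\subseteq\Diamond C$ says exactly that $C$ meets every member of $\mathcal F$. Therefore Lemma~\ref{t Rudin} applies and produces a (minimal) irreducible closed subset $A\subseteq C$ that still meets every member of $\mathcal F$.

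It then remains to extract a directed set from $A$. I would simply take $D=A$ and verify that $A$ is directed in the order of $P$: given $a_1,a_2\in A$, the sets $\ua a_1$ and $\ua a_2$ are open in $(P,\alpha(P))$ and each meets $A$, so by irreducibility of $A$ we get $A\cap\ua a_1\cap\ua a_2\neq\emptyset$, i.e.\ there is $a\in A$ with $a\geq a_1$ and $a\geq a_2$. Thus $D=A$ is a directed subset of $C$. Finally, since $A$ is a lower set, $\da D=A$, so the assertion that $A$ meets every member of $\mathcal F$ is literally $\mathcal F\subseteq\Diamond\da D$, which is the desired conclusion.

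As for the difficulty: there is essentially no deep obstacle here. The only points requiring (minor) care are the identification of $\mk(P,\alpha(P))$ with $\mathbf{Fin}\,P$ and the observation that an irreducible lower set in the Alexandroff topology is automatically directed; note that minimality of $A$ is not even used for this corollary, as the topological Rudin's Lemma is invoked only for the \emph{existence} of an irreducible closed subset of $C$ hitting all members of $\mathcal F$.
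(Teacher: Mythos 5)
Your proof is correct and is exactly the derivation the paper intends: it only remarks that the corollary follows by applying the Topological Rudin's Lemma to the Alexandroff topology, and your write-up supplies precisely the missing details (the identification $\mk(P,\alpha(P))=\mathbf{Fin}\,P$, irreducibility of the filtered family in $P_S$, and the fact that an irreducible lower set is directed). Your observation that minimality of $A$ is not needed here is a nice extra but does not change the argument.
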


For a $T_0$ space $X$ and $\mathcal{K}\subseteq \mathord{\mathsf{K}}(X)$, let $M(\mathcal{K})=\{A\in \mathcal C(X) : K\bigcap A\neq\emptyset \mbox{~for all~} K\in \mathcal{K}\}$ (that is, $\mathcal K\subseteq \Diamond A$) and $m(\mathcal{K})=\{A\in \mathcal C(X) : A \mbox{~is a minimal member of~} M(\mathcal{K})\}$.

By the proof of \cite[Lemma 3.1]{Klause-Heckmann}, we have the following result.

\begin{lemma}\label{t ruding}  Let $X$ be a $T_0$ space and $\mathcal{K}\subseteq \mathord{\mathsf{K}}(X)$. If $C\in M(\mathcal{K})$, then there is a closed subset $A$ of $C$ such that $C\in m(\mathcal{K})$.
\end{lemma}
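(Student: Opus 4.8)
The plan is to isolate exactly the Zorn's Lemma step from the proof of the topological Rudin's Lemma (\cite[Lemma 3.1]{Klause-Heckmann}). Consider the poset $\mathcal P=\{D\in\mathcal C(X) : D\subseteq C \text{ and } D\in M(\mathcal K)\}$ ordered by inclusion; since $C\in M(\mathcal K)$ we have $C\in\mathcal P$, so $\mathcal P\neq\emptyset$. A minimal element $A$ of $\mathcal P$ is automatically minimal in all of $M(\mathcal K)$: if $B\in M(\mathcal K)$ with $B\subseteq A$, then $B\subseteq C$, hence $B\in\mathcal P$ and so $B=A$ by minimality. Thus it suffices to produce a minimal element of $\mathcal P$, i.e.\ an inclusion-least-possible closed subset of $C$ still meeting every member of $\mathcal K$; this is the $A\in m(\mathcal K)$ asserted by the lemma.

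To apply Zorn's Lemma to $(\mathcal P,\supseteq)$ I would verify that every nonempty chain $\mathcal L\subseteq\mathcal P$ admits a lower bound in $\mathcal P$, the obvious candidate being $D_0=\bigcap\mathcal L$. It is automatic that $D_0$ is closed and contained in $C$, so the only point to check is that $D_0\in M(\mathcal K)$, that is, $K\cap D_0\neq\emptyset$ for each $K\in\mathcal K$. Fix $K\in\mathcal K$: the family $\{K\cap D : D\in\mathcal L\}$ consists of nonempty (because each $D\in M(\mathcal K)$) closed subsets of the compact subspace $K$, and it is linearly ordered by inclusion since $\mathcal L$ is a chain, so it has the finite intersection property. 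Compactness of $K$ then yields $\emptyset\neq\bigcap_{D\in\mathcal L}(K\cap D)=K\cap D_0$. Hence $D_0\in M(\mathcal K)$, so $D_0\in\mathcal P$ is a lower bound of $\mathcal L$.

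By Zorn's Lemma $\mathcal P$ has a minimal element $A$, which is then a closed subset of $C$ with $A\in m(\mathcal K)$, completing the proof. I do not expect any real obstacle here: the single substantive ingredient is the use of compactness of each $K\in\mathcal K$ to ensure that chain (hence directed) intersections of members of $M(\mathcal K)$ stay in $M(\mathcal K)$, which is precisely what makes $\mathcal P$ satisfy the hypothesis of Zorn's Lemma; everything else is bookkeeping.
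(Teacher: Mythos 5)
Your proof is correct and matches the paper's intended argument: the paper gives no proof of its own but states that the lemma follows from the proof of Heckmann and Keimel's Lemma 3.1, and the existence part of that proof is exactly your Zorn's Lemma argument on the closed subsets of $C$ lying in $M(\mathcal K)$, with compactness of each $K\in\mathcal K$ used via the finite intersection property to show that chains have lower bounds. (The lemma's statement contains a typo --- the conclusion should read $A\in m(\mathcal K)$ --- which you correctly repaired.)
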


In \cite{Shenchon, xu-shen-xi-zhao1}, based on topological Rudin's Lemma, Rudin spaces and well-filtered determined spaces were introduced and studied.

\begin{definition}\label{rudinset} (\cite{Shenchon, xu-shen-xi-zhao1})
		Let $X$ be a $T_0$ space. A nonempty subset  $A$  of $X$  is said to have the \emph{Rudin property}, if there exists a filtered family $\mathcal K\subseteq \mathord{\mathsf{K}}(X)$ such that $\overline{A}\in m(\mathcal K)$ (that is,  $\overline{A}$ is a minimal closed set that intersects all members of $\mathcal K$). Let $\mathsf{RD}(X)=\{A\in \mathcal C(X) : A\mbox{~has Rudin property}\}$. The sets in $\mathsf{RD}(X)$ will also be called \emph{Rudin sets}. The space $X$ is called a \emph{Rudin space}, $\mathsf{RD}$ \emph{space} for short, if $\ir_c(X)=\mathsf{RD}(X)$, that is, every irreducible closed set of $X$ is a Rudin set. The category of all Rudin spaces with continuous mappings is denoted by $\mathbf{Top}_{r}$.
\end{definition}

\begin{definition}\label{WDspace} (\cite{xu-shen-xi-zhao1})
	 A subset $A$ of a $T_0$ space $X$ is called a \emph{well-filtered determined set}, $\wdd$ \emph{set} for short, if for any continuous mapping $ f:X\longrightarrow Y$
to a well-filtered space $Y$, there exists a unique $y_A\in Y$ such that $\overline{f(A)}=\overline{\{y_A\}}$.
Denote by $\mathsf{WD}(X)$ the set of all closed well-filtered determined subsets of $X$. $X$ is called a \emph{well-filtered determined} space, $\mathsf{WD}$ \emph{space} for short, if all irreducible closed subsets of $X$ are well-filtered determined, that is, $\ir_c(X)=\wdd (X)$.
\end{definition}

\begin{proposition}\label{DRWIsetrelation}(\cite{xu-shen-xi-zhao1})
	Let $X$ be a $T_0$ space. Then $\mathcal{D}_c(X)\subseteq \mathsf{RD}(X)\subseteq\mathsf{WD}(X)\subseteq\ir_c(X)$.
\end{proposition}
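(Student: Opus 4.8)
The plan is to establish the three inclusions $\mathcal D_c(X)\subseteq\mathsf{RD}(X)$, $\mathsf{RD}(X)\subseteq\mathsf{WD}(X)$, and $\mathsf{WD}(X)\subseteq\ir_c(X)$ separately, the middle one being the substantive step. For the first, given $A=\overline D$ with $D$ a directed subset of $X$, I would take the family $\mathcal K=\{\ua d:d\in D\}$: each $\ua d$ is compact saturated (open sets are upper sets in the specialization order), and the directedness of $D$ makes $\mathcal K$ a filtered subfamily of $\mk(X)$ in the Smyth order. Then $\overline D\in M(\mathcal K)$ is clear, and if a closed $B\subseteq\overline D$ meets every $\ua d$ then, since $B$ is a lower set, each $d\in D$ lies in $B$, forcing $B=\overline D$; hence $\overline D\in m(\mathcal K)$ and $A\in\mathsf{RD}(X)$.

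For $\mathsf{WD}(X)\subseteq\ir_c(X)$, let $A\in\mathsf{WD}(X)$ and apply the defining property of a well-filtered determined set to the canonical embedding $\eta_X:X\longrightarrow X^s=P_H(\ir_c(X))$ into the sobrification. Since $X^s$ is sober, hence well-filtered, there is a (unique) $y_A\in X^s$ with $\overline{\eta_X(A)}=\overline{\{y_A\}}$, so $\overline{\eta_X(A)}$ is irreducible in $X^s$, being the closure of a point. As $\eta_X$ is a topological embedding, Lemmas~\ref{irrsubspace} and~\ref{irrimage} transport this back to $X$: irreducibility of $\overline{\eta_X(A)}$ in $X^s$ gives irreducibility of $\eta_X(A)$ in the subspace $\eta_X(X)$, and then $A=\eta_X^{-1}(\eta_X(A))$ is irreducible in $X$. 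Since members of $\mathsf{WD}(X)$ are closed, $A\in\ir_c(X)$.

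The core is $\mathsf{RD}(X)\subseteq\mathsf{WD}(X)$. Fix $A\in\mathsf{RD}(X)$ with a filtered witness $\mathcal K\subseteq\mk(X)$ (so $A=\overline A\in m(\mathcal K)$) and a continuous $f:X\longrightarrow Y$ with $Y$ well-filtered; the required point is unique because $Y$ is $T_0$. For existence, I would push $\mathcal K$ forward to $\mathcal L=\{\ua_Y f(K\cap A):K\in\mathcal K\}$. Each $K\cap A$ is nonempty (as $A\in M(\mathcal K)$) and compact, so $\ua_Y f(K\cap A)\in\mk(Y)$, and $\mathcal L$ is filtered since $K_3\subseteq K_1\cap K_2$ forces $\ua_Y f(K_3\cap A)\subseteq\ua_Y f(K_i\cap A)$ for $i=1,2$. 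Clearly $\overline{f(A)}\in M(\mathcal L)$. For minimality, let $B\subseteq\overline{f(A)}$ be closed with $B\in M(\mathcal L)$; for each $K$ there is $b\in B$ above some $f(c)$ with $c\in K\cap A$, and since $B$ is a lower set, $f(c)\in B$ and hence $c\in f^{-1}(B)\cap A\cap K$. Thus $A\cap f^{-1}(B)\in M(\mathcal K)$, so minimality of $A$ gives $A\subseteq f^{-1}(B)$, whence $\overline{f(A)}\subseteq B$ and $B=\overline{f(A)}$; therefore $\overline{f(A)}\in m(\mathcal L)$. Finally, $\overline{f(A)}$ meets every member of the filtered family $\mathcal L$, so well-filteredness of $Y$, applied to the open set $Y\setminus\overline{f(A)}$, yields $\bigcap\mathcal L\not\subseteq Y\setminus\overline{f(A)}$; any $y\in\bigcap\mathcal L\cap\overline{f(A)}$ satisfies $\overline{\{y\}}\subseteq\overline{f(A)}$ and $\overline{\{y\}}\in M(\mathcal L)$, so $\overline{\{y\}}=\overline{f(A)}$ by minimality. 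This exhibits the desired point, so $A\in\mathsf{WD}(X)$.

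I expect the main obstacle to be the choice of pushed-forward family in the middle inclusion. The naive choice $\{\ua_Y f(K):K\in\mathcal K\}$ need not transmit the minimality of $A$: a point of a candidate $B\subseteq\overline{f(A)}$ lying above $f(k)$ for some $k\in K$ need not arise from a point of $A$, so its pullback can escape $A$. Intersecting $K$ with $A$ before applying $f$ is exactly what keeps the pullback inside $A$, allowing the minimality of $A$ to be used. A secondary point that needs care is the last step, converting ``$\overline{f(A)}$ meets every member of $\mathcal L$'' into ``$\overline{f(A)}$ meets $\bigcap\mathcal L$'' via well-filteredness of $Y$.
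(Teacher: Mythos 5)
Your proof is correct, and it follows essentially the same route as the cited source \cite{xu-shen-xi-zhao1} (the paper itself only quotes this proposition): the key step of pushing the witnessing filtered family forward as $\{\mathord{\uparrow} f(K\cap A):K\in\mathcal K\}$ rather than $\{\mathord{\uparrow} f(K):K\in\mathcal K\}$, then extracting a generic point of $\overline{f(A)}$ from $\bigcap\mathcal L\cap\overline{f(A)}$ via well-filteredness and minimality, is exactly the argument used there. The treatment of the outer inclusions (point closures $\ua d$ for $\mathcal D_c(X)\subseteq\mathsf{RD}(X)$, and the sobrification $\eta_X:X\to X^s$ for $\mathsf{WD}(X)\subseteq\ir_c(X)$) likewise matches the standard proofs.
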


\begin{corollary}\label{SDRWspacerelation}(\cite{xu-shen-xi-zhao1})
	Sober $\Rightarrow$ $\mathsf{DC}$ $\Rightarrow$ $\mathsf{RD}$ $\Rightarrow$ $\mathsf{WD}$.
\end{corollary}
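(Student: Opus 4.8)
The plan is to obtain all three implications at once from the single chain of inclusions already recorded in Proposition~\ref{DRWIsetrelation}, namely $\mathcal{D}_c(X)\subseteq\mathsf{RD}(X)\subseteq\mathsf{WD}(X)\subseteq\ir_c(X)$, by inserting into it the extra equality that each hypothesis supplies and then squeezing. Thus there is essentially nothing to compute: the content is the bookkeeping below together with the (already available) proposition.

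First I would handle \emph{sober $\Rightarrow$ $\mathsf{DC}$}. If $X$ is sober then, by the definition of sobriety, $\ir_c(X)=\mathcal{S}_c(X)$, and since every singleton is a directed set we have $\mathcal{S}_c(X)\subseteq\mathcal{D}_c(X)$; combined with $\mathcal{D}_c(X)\subseteq\ir_c(X)$ (which is part of Proposition~\ref{DRWIsetrelation}) this yields $\ir_c(X)=\mathcal{D}_c(X)$, which is precisely the statement (Definition~\ref{DCspace}) that $X$ is a $\mathsf{DC}$ space. Next, for \emph{$\mathsf{DC}\Rightarrow\mathsf{RD}$}: the hypothesis gives $\ir_c(X)=\mathcal{D}_c(X)$, and feeding this into $\mathcal{D}_c(X)\subseteq\mathsf{RD}(X)\subseteq\ir_c(X)$ forces $\mathsf{RD}(X)=\ir_c(X)$, i.e.\ $X$ is a Rudin space. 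Finally, for \emph{$\mathsf{RD}\Rightarrow\mathsf{WD}$}: now $\ir_c(X)=\mathsf{RD}(X)$, and feeding this into $\mathsf{RD}(X)\subseteq\mathsf{WD}(X)\subseteq\ir_c(X)$ forces $\mathsf{WD}(X)=\ir_c(X)$, i.e.\ $X$ is a $\mathsf{WD}$ space.

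I do not expect any genuine obstacle in the corollary itself; the only delicate point is hidden in the cited Proposition~\ref{DRWIsetrelation}, and should one wish to reprove it, the hard inclusion is $\mathsf{RD}(X)\subseteq\mathsf{WD}(X)$. There, given a Rudin set $A$ witnessed by a filtered family $\mathcal{K}\subseteq\mk(X)$ with $\overline{A}\in m(\mathcal{K})$ and a continuous map $f\colon X\to Y$ into a well-filtered space $Y$, one would push $\mathcal{K}$ forward to the filtered family $\{\ua f(K):K\in\mathcal{K}\}$ in $\mk(Y)$, use well-filteredness of $Y$ together with Topological Rudin's Lemma (Lemma~\ref{t Rudin}) to locate a minimal closed set meeting all of these, and then transport minimality back through $f$ to conclude that $\overline{f(A)}$ is the closure of a single point (uniqueness being automatic from $T_0$-ness). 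The remaining inclusions are routine: $\mathcal{D}_c(X)\subseteq\mathsf{RD}(X)$ is witnessed, for a directed $D$, by the filtered family $\{\ua d:d\in D\}$, minimality of $\overline{D}$ following since a closed set is a lower set; and $\mathsf{WD}(X)\subseteq\ir_c(X)$ is obtained by testing the defining property against the canonical map $\eta_X\colon X\to X^s$ into the sober (hence well-filtered) sobrification. But since Proposition~\ref{DRWIsetrelation} is available, the corollary reduces to the squeezing argument above.
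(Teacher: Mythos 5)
Your proof is correct and matches the paper's intent exactly: the corollary is stated as an immediate consequence of Proposition~\ref{DRWIsetrelation}, obtained by the same squeezing of the inclusion chain $\mathcal{D}_c(X)\subseteq\mathsf{RD}(X)\subseteq\mathsf{WD}(X)\subseteq\ir_c(X)$ against the equality supplied by each hypothesis. The additional sketch of how Proposition~\ref{DRWIsetrelation} itself would be proved is not needed but is consistent with the cited source.
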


 A topological space $X$ is \emph{locally hypercompact} if for each $x\in X$ and each open neighborhood $U$ of $x$, there is  $\ua F\in \mathbf{Fin}X$ such that $x\in\ii\,\ua F\subseteq\ua F\subseteq U$ (cf. \cite{E_20182}). A space $X$ is called \emph{core}-\emph{compact} if $(\mathcal O(X), \subseteq)$ is a \emph{continuous lattice} (cf. \cite{redbook}).

\begin{theorem}\label{LHCdirected} (\cite{E_20182})
	Let $X$ be a locally hypercompact $T_0$ space and $A\in\ir(X)$. Then there exists a directed subset $D\subseteq\da A$ such that $\overline{A}=\overline{D}$. Therefore, $X$ is a $\mathsf{DC}$ space, and hence a Rudin space and a $\wdd$ space.
\end{theorem}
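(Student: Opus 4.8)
\medskip
\noindent\textit{Proof proposal.}\quad Since $\overline{A}\in\ir_c(X)$ by Lemma~\ref{irrsubspace} and $\da A\subseteq\overline{A}$, it is enough to produce a directed $D\subseteq\da A$ that meets every open set which meets $A$: then $D\subseteq\da A\subseteq\overline{A}$ forces $\overline{D}=\overline{A}$, and the ``Therefore'' part will follow formally. The plan is to use local hypercompactness to manufacture, from $A$, a \emph{filtered} family of finitely generated upper sets lying ``just below'' $A$, and then to feed this family into Rudin's Lemma (Corollary~\ref{rudin}) to extract the directed set $D$.

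First I would introduce $\mathcal{G}=\{F\in X^{(<\omega)} : \ii\ua F\cap A\neq\emptyset\}$ and, for $F\in\mathcal{G}$, the finite set $S(F)=F\cap\da A$; it is nonempty because a point of $\ii\ua F\cap A\subseteq\ua F$ lies above some element of $F$, which then lies in $\da A$. The crucial claim is that $\mathcal{F}=\{\ua S(F):F\in\mathcal{G}\}$ is a filtered subfamily of $\mathbf{Fin}X$. Its verification rests on two observations, both driven by the hypotheses: (i) for $F_1,F_2\in\mathcal{G}$ there is $F_3\in\mathcal{G}$ with $\ua F_3\subseteq\ua F_1\cap\ua F_2$, obtained by first using irreducibility of $A$ to find a point of $A$ in the open set $\ii\ua F_1\cap\ii\ua F_2$ and then applying local hypercompactness at that point; and (ii) whenever $\ua F'\subseteq\ua F$ with $F,F'\in\mathcal{G}$, one has $\ua S(F')\subseteq\ua S(F)$, since each element of $S(F')$ sits above some element of $F$ which, being moreover $\leq$ some element of $A$, must belong to $S(F)$.

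Next I would apply Rudin's Lemma (Corollary~\ref{rudin}) with the poset $X$, the nonempty lower set $C=\da A$, and the filtered family $\mathcal{F}$ (note $\mathcal{F}\subseteq\Diamond C$ because $\emptyset\neq S(F)\subseteq\da A$). This yields a directed $D\subseteq\da A$ with $\ua S(F)\cap\da D\neq\emptyset$ for all $F\in\mathcal{G}$; unwinding, for each $F\in\mathcal{G}$ there are $s\in S(F)$ and $d\in D$ with $s\leq d$. To see that $D$ is dense in $\overline{A}$, take an open $U$ with $U\cap A\neq\emptyset$, pick $a\in U\cap A$, and use local hypercompactness to get $F$ with $a\in\ii\ua F\subseteq\ua F\subseteq U$; then $F\in\mathcal{G}$, and the associated $d\in D$ satisfies $d\geq s$ for some $s\in S(F)\subseteq\ua F\subseteq U$, so $d\in U$ because $U$ is an upper set. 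Finally, for $A\in\ir_c(X)$ we get $A=\overline{A}=\overline{D}$ with $D$ directed, hence $\ir_c(X)\subseteq\mathcal{D}_c(X)$; the reverse inclusion holds by Proposition~\ref{DRWIsetrelation}, so $X$ is a $\mathsf{DC}$ space, and then a Rudin space and a $\wdd$ space by Corollary~\ref{SDRWspacerelation}.

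The step I expect to be most delicate is the passage from $A$ to a directed set of \emph{points}: simply choosing, for each relevant $F$, one point of $F$ below a chosen point of $A$ does not produce a directed set, so the argument must go through the filtered family $\mathcal{F}$ of finitely generated upper sets and invoke Rudin's Lemma. One also has to keep track of the fact that $\ua f\subseteq\ua F$ for $f\in F$ (not the reverse inclusion), which is exactly why the approximations cannot be reduced to single points and why the finite ``trace'' sets $S(F)$ must be carried along.
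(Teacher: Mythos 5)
The paper does not prove this statement; it is quoted from Ern\'e's paper \cite{E_20182}. Your argument is a correct, self-contained reconstruction along the standard lines: build the family $\mathcal G=\{F\in X^{(<\omega)}:\ii\,\ua F\cap A\neq\emptyset\}$, use irreducibility of $A$ (in its open-set form) together with local hypercompactness to show the associated finitely generated upper sets form a filtered family, and then invoke Rudin's Lemma with $C=\da A$ to extract the directed set $D$; the density check and the deduction of the $\mathsf{DC}$, Rudin and $\wdd$ properties via Propositions~\ref{DRWIsetrelation} and Corollary~\ref{SDRWspacerelation} are all sound. One small remark: the trace sets $S(F)=F\cap\da A$ are not actually needed. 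Since $\ii\,\ua F\cap A\neq\emptyset$, each $\ua F$ with $F\in\mathcal G$ already meets $A\subseteq\da A$, so you may apply Corollary~\ref{rudin} directly to the filtered family $\{\ua F: F\in\mathcal G\}$ and the lower set $\da A$; given an open $U$ meeting $A$ and $F\in\mathcal G$ with $\ua F\subseteq U$, any point of $\ua F\cap\da D$ lies in $U$ and is below some $d\in D$, which then lies in $U$ because open sets are upper sets. Your worry that ``the approximations cannot be reduced to single points'' is right in spirit (one point of $F$ per step would not yield a directed set), but the remedy is Rudin's Lemma itself, not the sets $S(F)$; carrying them along is harmless but adds a layer (your step (ii)) that the shorter route avoids.
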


\begin{theorem}\label{LCrudin} (\cite{xu-shen-xi-zhao1})
	Every locally compact $T_0$ space is a Rudin space.
\end{theorem}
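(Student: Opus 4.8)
The plan is to prove directly that every irreducible closed subset $A$ of a locally compact $T_0$ space $X$ has the Rudin property; since $\mathsf{RD}(X)\subseteq\ir_c(X)$ always holds by Proposition~\ref{DRWIsetrelation}, this gives $\ir_c(X)=\mathsf{RD}(X)$, i.e. $X$ is a Rudin space. Throughout I would use the standard characterization of local compactness (for every $x$ in an open set $U$ there is a compact $K$ with $x\in\ii\,K\subseteq K\subseteq U$) together with the elementary fact that in a $T_0$ space $\ua K\in\mk(X)$ whenever $K$ is compact, and $x\in\ii\,K\subseteq\ii\,\ua K$. So every point of $X$ has a neighbourhood base of compact saturated sets.

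Fix $A\in\ir_c(X)$ and introduce the candidate witness family
$$\mathcal K_A=\{Q\in\mk(X) : A\cap\ii\,Q\neq\emptyset\},$$
the compact saturated sets whose interior meets $A$. First I would note $\mathcal K_A\neq\emptyset$: picking $a\in A$ and applying local compactness to $a\in X$ produces a compact saturated neighbourhood of $a$, which lies in $\mathcal K_A$.

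The key step is that $\mathcal K_A$ is filtered in the Smyth order. Given $Q_1,Q_2\in\mathcal K_A$, we have $A\cap\ii\,Q_1\neq\emptyset$ and $A\cap\ii\,Q_2\neq\emptyset$, so irreducibility of $A$ forces $A\cap\ii\,Q_1\cap\ii\,Q_2\neq\emptyset$. Choosing $x$ in this nonempty open set and applying local compactness inside $\ii\,Q_1\cap\ii\,Q_2$ yields $Q_3\in\mk(X)$ with $x\in\ii\,Q_3\subseteq Q_3\subseteq Q_1\cap Q_2$; then $Q_3\in\mathcal K_A$ and $Q_3$ refines both $Q_1$ and $Q_2$. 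After this, I would verify $\overline A=A\in m(\mathcal K_A)$. That $A$ meets every $Q\in\mathcal K_A$ is immediate from $\emptyset\neq A\cap\ii\,Q\subseteq A\cap Q$, so $A\in M(\mathcal K_A)$. For minimality, suppose $B\subsetneq A$ is closed and still meets every member of $\mathcal K_A$; pick $x\in A\setminus B$, so $X\setminus B$ is an open neighbourhood of $x$, and use local compactness to get $Q\in\mk(X)$ with $x\in\ii\,Q\subseteq Q\subseteq X\setminus B$. Then $Q\in\mathcal K_A$ but $Q\cap B=\emptyset$, contradicting the choice of $B$. Hence $A$ is a minimal member of $M(\mathcal K_A)$, so $A$ is a Rudin set, and therefore $X$ is a Rudin space.

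I expect the filteredness of $\mathcal K_A$ to be the only genuinely substantive point, since it is where the irreducibility of $A$ must be married to the ``shrinking'' behaviour of locally compact spaces; the remaining verifications are routine, modulo the standard passage from compact neighbourhoods to compact saturated neighbourhoods via $K\mapsto\ua K$.
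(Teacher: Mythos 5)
Your proof is correct, and it is essentially the standard argument for this result (the paper only cites \cite{xu-shen-xi-zhao1} without reproducing the proof, which there also takes the witness family of compact saturated sets whose interiors meet $A$, uses irreducibility of $A$ to get filteredness, and local compactness to get minimality). All the small verifications you defer — passing from compact neighbourhoods to saturated ones via $K\mapsto\ua K$, and that $\ua K$ stays inside the upper sets $Q_1\cap Q_2$ and $X\setminus B$ — go through as you indicate.
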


\begin{theorem}\label{CorecomptWD} (\cite{xu-shen-xi-zhao1}) Every core-compact $T_0$ space is well-filtered determined.
\end{theorem}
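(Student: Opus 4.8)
The plan is to reduce the assertion to a Rudin-set statement that can then be verified by passing to the sobrification. Since every well-filtered space is $T_0$, uniqueness of the point $y_A$ in Definition~\ref{WDspace} is automatic, so only existence is at issue. Accordingly I fix $A\in\ir_c(X)$ and a continuous map $f\colon X\longrightarrow Y$ into a well-filtered space $Y$, and I try to find $y_A\in Y$ with $\overline{f(A)}=\overline{\{y_A\}}$. The key reduction is: it suffices to prove that $\overline{f(A)}$ is a Rudin set of $Y$, i.e.\ that $\overline{f(A)}\in\kf(Y)$. For then Proposition~\ref{DRWIsetrelation} gives $\overline{f(A)}\in\wdd(Y)$, and feeding the identity map $\mathrm{id}_Y\colon Y\longrightarrow Y$ — a continuous map into the well-filtered space $Y$ — into Definition~\ref{WDspace} produces precisely the generic point of $\overline{f(A)}$ that is wanted. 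Note that $\overline{f(A)}\in\ir_c(Y)$ already, by Lemma~\ref{irrimage}. So the task becomes: manufacture a filtered family $\mathcal K\subseteq\mk(Y)$ with $\overline{f(A)}\in m(\mathcal K)$.

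Core-compactness of $X$ is what will feed the construction of $\mathcal K$. Since $X$ is core-compact, $\mathcal O(X)$ is a continuous lattice, and since $A$ is irreducible the set $\Phi_A=\{U\in\mathcal O(X):U\cap A\neq\emptyset\}$ is a completely prime, hence Scott open, filter of $\mathcal O(X)$. Given $V\in\mathcal O(Y)$ with $V\cap f(A)\neq\emptyset$, we have $f^{-1}(V)\in\Phi_A$, so continuity of $\mathcal O(X)$ together with Scott openness of $\Phi_A$ let me interpolate some $W\in\Phi_A$ with $W\ll f^{-1}(V)$. I would then move this to the sobrification: $\mathcal O(X)\cong\mathcal O(X^s)$, and $X^s$ is locally compact sober (it is core-compact and sober), so $W\ll f^{-1}(V)$ supplies a compact saturated set $\mathcal Q$ of $X^s$ with $\Diamond W\subseteq\ii\mathcal Q$ and $\mathcal Q\subseteq\Diamond f^{-1}(V)$. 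Pushing it forward along the canonical continuous extension $g\colon X^s\longrightarrow Y^s$ of $\eta_Y\circ f$ (which exists since $Y^s$ is sober and $X^s$ is the sobrification of $X$, cf.\ Remark~\ref{eta continuous}) — for which $g^{-1}(\Diamond V)=\Diamond f^{-1}(V)$ and $g(A)=\overline{f(A)}$ — yields a compact saturated subset of $Y^s$ inside $\Diamond V$ and containing $\overline{f(A)}$. Finally I would descend from $\mk(Y^s)$ to $\mk(Y)$ — a step of Hofmann--Mislove type that is licensed by $Y$ being well-filtered — to get $K_V\in\mk(Y)$ with $K_V\subseteq V$. Closing $\{K_V\}$ under the operation induced by finite meets $V_1\cap V_2$ (legitimate because $f(A)$ is irreducible, $\Phi_A$ is a filter, and $\mathcal O(X)$ interpolates) then produces a filtered family $\mathcal K\subseteq\mk(Y)$.

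It remains to check $\overline{f(A)}\in m(\mathcal K)$. Minimality is the easy half: if $C\in\mathcal C(Y)$ with $C\subsetneq\overline{f(A)}$ met every member of $\mathcal K$, then $C$ could not contain $f(A)$, so $Y\setminus C$ is open and meets $f(A)$; but then $K_{Y\setminus C}\subseteq Y\setminus C$ is disjoint from $C$, a contradiction. The genuinely hard half is $\overline{f(A)}\in M(\mathcal K)$, i.e.\ that each $K_V$ meets $\overline{f(A)}$; this forces one to track the point $A$ of $X^s$ through $g$ onto $\overline{f(A)}$, note that $\overline{f(A)}$ lies in each pushed-forward compact saturated set of $Y^s$, and then show this survives the descent to $Y$. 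I expect that to be the main obstacle, for two intertwined reasons: first, core-compactness of $X$ gives local compactness of $X^s$ but never of $X$ itself, so every ``compact neighbourhood'' argument must be carried out upstairs in $X^s$ and then transported through $g$; second, the passage from $\mk(Y^s)$ down into $\mk(Y)$ is exactly where well-filteredness of $Y$ (and the topological Rudin's Lemma~\ref{t Rudin}, or a Hofmann--Mislove argument for well-filtered spaces) must be invoked, and it has to be arranged so as not to presuppose the very conclusion that $\overline{f(A)}$ has a generic point in $Y$. Once $\overline{f(A)}\in m(\mathcal K)$ is in hand, $\overline{f(A)}\in\kf(Y)$ and the first-paragraph reduction closes the proof; since $A$ and $f$ were arbitrary, $\ir_c(X)=\wdd(X)$, i.e.\ $X$ is well-filtered determined.
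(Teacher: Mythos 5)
Your reduction to producing a filtered family $\mathcal K\subseteq\mk(Y)$ with $\overline{f(A)}\in m(\mathcal K)$ is sound, and the ``upstairs'' part of the construction is fine: interpolating $W\ll f^{-1}(V)$ in the continuous lattice $\mathcal O(X)$ using the Scott-open filter $\Phi_A$, extracting $\mathcal Q\in\mk(X^s)$ with $\Diamond W\subseteq\mathcal Q\subseteq\Diamond f^{-1}(V)$ from local compactness of the sober core-compact space $X^s$, and pushing forward along $g$. The proof breaks at the two steps you yourself flag as delicate; they are not merely delicate, they are where the entire content of the theorem sits, and one of them is unavailable in general. The descent from $\mk(Y^s)$ to $\mk(Y)$ is \emph{not} licensed by well-filteredness: well-filteredness controls filtered intersections of members of $\mk(Y)$, whereas a compact saturated $Q\subseteq Y^s$ corresponds to an arbitrary Scott-open filter of $\mathcal O(Y)$, and such a filter need not be the open-neighbourhood filter of any member of $\mk(Y)$. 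Concretely, if $B\in\ir_c(Y)$ has no generic point (well-filtered non-sober $Y$ exist), then $\ua_{Y^s}\{B\}\in\mk(Y^s)$ and its neighbourhood filter in $\mathcal O(Y)\cong\mathcal O(Y^s)$ is $\Phi_B=\{V:V\cap B\neq\emptyset\}$; since every $y$ with $B\subseteq\overline{\{y\}}$ lies outside $B$, we get $\bigcap\Phi_B\subseteq Y\setminus B$ while $Y\setminus B\notin\Phi_B$, so no $K\in\mk(Y)$ satisfies $\Phi_B=\{V:K\subseteq V\}$. Thus there is no general operation $\mk(Y^s)\to\mk(Y)$ of the kind you invoke, and you give no argument special to the sets $\ua g(\mathcal Q)$ that would exempt them.

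The second step, $\overline{f(A)}\in M(\mathcal K)$, is independently fatal and is exactly the theorem in disguise. Knowing that the point $\overline{f(A)}$ of $Y^s$ lies in the saturated set $\ua g(\mathcal Q)$ only tells you that every irreducible closed set \emph{containing} $\overline{f(A)}$ lies there; to get $K_V\cap\overline{f(A)}\neq\emptyset$ in $Y$ you need some $y\in\overline{f(A)}$ with $\overline{\{y\}}\in\ua g(\mathcal Q)$, and $\overline{\{y\}}\subseteq\overline{f(A)}$ sits on the wrong side of the specialization order of $Y^s$. If $\overline{f(A)}$ has no generic point, nothing in the construction prevents $\ua g(\mathcal Q)$ from missing $\eta_Y(Y)\cap\Diamond_{\ir_c(Y)}\overline{f(A)}$ entirely. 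Note that with $Y=X$ and $f=\mathrm{id}_X$ your argument would yield ``every core-compact well-filtered space is sober'' (Corollary \ref{corecwellf sober}), i.e., the Jia--Jung problem, in a few lines; that is a strong signal that the sobrification route is circular here. The proof of this theorem in \cite{xu-shen-xi-zhao1} does not pass through $Y^s$ at all: it constructs the filtered family in $\mk(Y)$ directly, by an iterated interpolation in $\mathcal O(X)$ combined with the topological Rudin Lemma, precisely so as to land on compact saturated subsets of $Y$ itself that demonstrably meet $\overline{f(A)}$. As it stands, your proposal is an honest reformulation of the difficulty rather than a proof.
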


By Theorem \ref{CorecomptWD}, we immediately deduce the following.

\begin{corollary}\label{corecwellf sober}  (\cite{Lawson-Xi, xu-shen-xi-zhao1}) Every core-compact well-filtered space is sober.
\end{corollary}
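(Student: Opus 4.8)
The plan is to derive the corollary directly from Theorem~\ref{CorecomptWD} together with the elementary observation that a well-filtered space which is also well-filtered determined is automatically sober. So I would fix a core-compact $T_0$ space $X$ that is well-filtered, and first invoke Theorem~\ref{CorecomptWD} to conclude that $X$ is a $\wdd$ space; that is, every $A\in\ir_c(X)$ is a well-filtered determined set.

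The key step is then to unwind Definition~\ref{WDspace} for the specific continuous map $\mathrm{id}_X\colon X\longrightarrow X$, using precisely the hypothesis that the codomain $X$ is well-filtered. Applied to an arbitrary $A\in\ir_c(X)$, this produces a \emph{unique} point $y_A\in X$ with $\overline{\mathrm{id}_X(A)}=\overline{\{y_A\}}$; since $A$ is already closed this says exactly $A=\overline{\{y_A\}}$, with $y_A$ unique. As $A\in\ir_c(X)$ was arbitrary, $X$ meets the defining condition of a sober space.

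There is essentially no obstacle here: all the real work is contained in Theorem~\ref{CorecomptWD}, and the remaining move is the one-line remark ``well-filtered $+\ \wdd\ \Rightarrow$ sober'' obtained by feeding the identity map into Definition~\ref{WDspace}. If desired, this intermediate implication can be isolated as a standalone lemma whose converse is immediate from Corollary~\ref{SDRWspacerelation} (sober $\Rightarrow\mathsf{DC}\Rightarrow\mathsf{RD}\Rightarrow\wdd$) together with the trivial implication sober $\Rightarrow$ well-filtered, yielding the clean characterization that a well-filtered space is sober if and only if it is a $\wdd$ space; the corollary is then the instance of this equivalence furnished by core-compactness.
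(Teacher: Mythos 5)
Your proof is correct and takes essentially the same route as the paper: the paper deduces the corollary ``immediately'' from Theorem~\ref{CorecomptWD}, and the step it leaves implicit is exactly the one you spell out, namely that applying Definition~\ref{WDspace} to the identity map shows every well-filtered $\wdd$ space is sober.
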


At the moment, it is still not sure wether every core-compact $T_0$ space is a Rudin space.

\section{$\omega$-$d$-Spaces and $\omega$-well-filtered spaces}

For a $T_0$ space $X$, let $\mathcal D^\omega(X)=\{D\subseteq X : D \mbox{ is countable and directed}\}$ and $\mathcal D^\omega_c(X)=\{\overline{D} : D\in \mathcal D^\omega(X)\}$.

\begin{definition}\label{omega dcpo}(\cite{xu-shen-xi-zhao2})
	A poset $P$ is called an \emph{$\omega$-dcpo}, if for any $D\in \mathcal D^\omega(P)$, $\bigvee D$ exists.
\end{definition}

\begin{lemma}\label{cchain} (\cite{xu-shen-xi-zhao2})
Let $P$ be a poset and $D\in \mathcal D^\omega(P)$. Then there exists a countable chain $C\subseteq D$ such that $D=\da C$. Hence, $\bigvee C$ exists and $\bigvee C=\bigvee D$ whenever $\bigvee D$ exists.
\end{lemma}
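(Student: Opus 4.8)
The plan is to produce $C$ by a routine recursive construction. Since $D\in\mathcal D^\omega(P)$ is countable and nonempty (directed sets being nonempty by convention), fix an enumeration $D=\{d_n : n<\kappa\}$ with $\kappa\le\omega$. I would build a $\le$-increasing sequence $(c_n)_{n<\kappa}$ inside $D$ by recursion: set $c_0=d_0$, and, given $c_n\in D$, use the directedness of $D$ to pick $c_{n+1}\in D$ with $c_n\le c_{n+1}$ and $d_{n+1}\le c_{n+1}$ (an appeal to dependent choice; if $D$ is finite the sequence simply stabilizes). Put $C=\{c_n : n<\kappa\}$.

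It then remains to verify three immediate facts. (i) $C$ is a countable chain contained in $D$: membership in $D$ holds by construction, countability is clear, and $C$ is totally ordered because $(c_n)$ is increasing. (ii) $D\subseteq\da C$: an easy induction gives $d_n\le c_n$ for all $n<\kappa$, so every point of $D$ lies below a point of $C$; combined with $C\subseteq D$ this yields $\da C=\da D$, which is the asserted identity (and literally $D=\da C$ when $D$ is a lower set), and in any case says $C$ is cofinal in $D$. (iii) $C$ and $D$ have the same upper bounds: $C\subseteq D$ makes every upper bound of $D$ one of $C$; conversely, if $u$ bounds $C$ and $d=d_n\in D$, then $d\le c_n\le u$. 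Hence $\bigvee C$ exists if and only if $\bigvee D$ does, and they coincide; in particular $\bigvee C=\bigvee D$ whenever $\bigvee D$ exists.

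There is essentially no obstacle here: the construction is elementary, and the only points needing a little care are the indexing bookkeeping — feeding $d_{n+1}$ into the bound chosen at stage $n+1$, which is exactly what forces cofinality — and the trivial degenerate case in which $D$ is finite. If convenient, $C$ may be presented as an ascending $\omega$-chain $c_0\le c_1\le c_2\le\cdots$, which is the form used in the applications that follow.
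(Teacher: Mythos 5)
Your proof is correct, and it is the standard argument; the paper itself only cites this lemma from \cite{xu-shen-xi-zhao2} without reproducing a proof, and your recursive construction of a cofinal increasing sequence (folding $d_{n+1}$ into the upper bound chosen at stage $n+1$) is exactly the expected one. Your parenthetical observation that the literal equality $D=\da C$ should be read as $\da C$ taken within $D$ (i.e.\ cofinality of $C$ in $D$) unless $D$ is a lower set is a fair reading of the statement and does not affect the conclusion about suprema.
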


By Lemma \ref{cchain}, a poset $P$ is an $\omega$-dcpo if{}f for any countable chain $C$ of $P$, $\bigvee C$ exists.

\begin{definition}\label{omega Scott topology} (\cite{xu-shen-xi-zhao2})
	Let $P$ be a poset. A subset $U$ of $P$ is called \emph{$\omega$-Scott open} if (i) $U=\ua U$, and (ii) for any countable directed set $D$, $\bigvee D\in U$ implies that $D\cap U\neq\emptyset$. All $\omega$-Scott open sets form a topology on $P$,  denoted by $\sigma_\omega(P)$ and called the \emph{$\omega$-Scott topology}. The space $\Sigma_{\omega}P=(P,\sigma_{\omega}(P))$ is called the \emph{$\omega$-Scott space} of $P$.
\end{definition}

Clearly, $\sigma(P)\subseteq\sigma_{\omega}(P)$. The converse need not be true, see Example \ref{omega-d-space needed} in Section 7.

\begin{definition}\label{omega d-space} (\cite{xu-shen-xi-zhao2})
	A $T_0$ space $X$ is called an $\omega$-$d$-space (or an \emph{$\omega$-monotone convergence space}) if for any $D\in\mathcal D^\omega(X)$, the closure of $D$ has a generic point, equivalently, if $\mathcal D_c^\omega(X)=\mathcal S_c(X)$.
\end{definition}

Some characterizations of $\omega$-$d$-spaces were given in \cite[Proposition 3.7]{xu-shen-xi-zhao2}

\begin{definition}\label{omega-WF space} (\cite{xu-shen-xi-zhao2})
	A $T_0$ space $X$ is called \emph{$\omega$-well-filtered}, if for any countable filtered family $\{K_i:i<\omega\}\subseteq \mk (X)$ and  $U\in\mathcal O(X)$, it holds that
		$$\bigcap_{i<\omega}K_i\subseteq U \ \Rightarrow \  \exists i_0<\omega, K_{i_0}\subseteq U.$$
\end{definition}

By Lemma \ref{cchain}, we have the following result.

\begin{proposition}\label{omega WF charact}  (\cite{xu-shen-xi-zhao2})
	 A $T_0$ space $X$ is $\omega$-well-filtered if{}f for any countable descending chain $K_0\supseteq K_1\supseteq K_2\supseteq\ldots\supseteq K_n\supseteq\ldots$ of compact saturated subsets of $X$ and $U\in\mathcal O(X)$, the following implication holds:
	$$\bigcap_{i<\omega}K_i\subseteq U\ \Rightarrow\ \exists i_0<\omega, \ K_{i_0}\subseteq U.$$
\end{proposition}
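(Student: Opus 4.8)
The plan is to prove the two implications separately; the direction from $\omega$-well-filteredness to the chain condition is a triviality, and the converse is where Lemma~\ref{cchain} does the work.

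For the direction from $\omega$-well-filteredness to the chain condition, observe that a descending chain $K_0\supseteq K_1\supseteq K_2\supseteq\cdots$ of compact saturated sets is in particular a countable filtered family in $\mk(X)$. Hence the implication $\bigcap_{i<\omega}K_i\subseteq U\Rightarrow\exists i_0<\omega,\ K_{i_0}\subseteq U$ is nothing but the defining property of $\omega$-well-filteredness applied to this family, and there is nothing further to do.

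For the converse, I would assume the chain condition and take a countable filtered family $\{K_i:i<\omega\}\subseteq\mk(X)$ with $\bigcap_{i<\omega}K_i\subseteq U$ for some $U\in\mathcal O(X)$. The key point is that a filtered subfamily of $\mk(X)$ is exactly a directed subset of the poset $(\mk(X),\sqsubseteq)$, where $\sqsubseteq$ is the Smyth order (recall $K\sqsubseteq K'$ iff $K'\subseteq K$); thus $\{K_i:i<\omega\}$ is a countable directed subset of this poset. Applying Lemma~\ref{cchain} to it, I obtain a countable chain $\mathcal{C}=\{C_n:n<\omega\}\subseteq\{K_i:i<\omega\}$ with $\{K_i:i<\omega\}=\da\mathcal{C}$, the downward closure being taken in $(\mk(X),\sqsubseteq)$. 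Unwinding the Smyth order, $\mathcal{C}$ is a descending chain $C_0\supseteq C_1\supseteq C_2\supseteq\cdots$ of compact saturated sets, and the equality $\{K_i:i<\omega\}=\da\mathcal{C}$ says precisely that each $K_i$ contains some $C_n$. Combining this with $\mathcal{C}\subseteq\{K_i:i<\omega\}$ yields $\bigcap_{n<\omega}C_n=\bigcap_{i<\omega}K_i\subseteq U$. Now the chain hypothesis applied to $\mathcal{C}$ and $U$ gives some $C_{n_0}\subseteq U$; since $C_{n_0}$ is one of the $K_i$, there is $i_0<\omega$ with $K_{i_0}=C_{n_0}\subseteq U$, as required.

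I expect no genuine obstacle here; the only thing to watch is the bookkeeping about the direction of the Smyth order — cofinality and downward closure in $(\mk(X),\sqsubseteq)$ correspond to reverse set inclusion — and the resulting identification $\bigcap_{n<\omega}C_n=\bigcap_{i<\omega}K_i$, which uses both inclusions $\mathcal{C}\subseteq\{K_i:i<\omega\}$ and $\{K_i:i<\omega\}\subseteq\da\mathcal{C}$.
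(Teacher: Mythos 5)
Your proof is correct and takes essentially the same route the paper intends: the paper obtains this proposition precisely by applying Lemma~\ref{cchain} to the countable filtered family viewed as a countable directed subset of $(\mk(X),\sqsubseteq)$, exactly as you do. The only cosmetic point is that the cofinal chain should be enumerated as a descending $\omega$-sequence before the chain hypothesis is invoked, which is immediate since the chain produced in Lemma~\ref{cchain} is already an $\omega$-indexed monotone sequence.
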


It is easy to check that every $\omega$-well-filtered space is an $\omega$-$d$-space (see \cite[Proposition 3.11]{xu-shen-xi-zhao2}). In \cite{xu-shen-xi-zhao2}, we  introduced and studied two new classes of closed subsets in $T_0$ spaces - $\omega$-Rudin sets and  $\omega$-well-filtered determined closed sets. The $\omega$-Rudin sets lie between the class of all closures of countable directed subsets and that of $\omega$-well-filtered determined closed sets, and $\omega$-well-filtered determined closed sets lie between the class of all $\omega$-Rudin sets subsets and that of irreducible closed subsets.

\begin{definition}\label{omega rudinset} (\cite{xu-shen-xi-zhao2})
Let $X$ be a $T_0$ space. A nonempty subset $A$ of $X$ is said to have the $\omega$-\emph {Rudin property}, if there exists a countable filtered family $\mathcal K\subseteq \mathord{\mathsf{K}}(X)$ such that $\overline{A}\in m(\mathcal K)$ (that is,  $\overline{A}$ is a minimal closed set that intersects all members of $\mathcal K$). Let $\mathsf{RD}_\omega(X)=\{A\in \mathcal C(X) : A\mbox{~has $\omega$-Rudin property}\}$. The sets in $\mathsf{RD}_\omega(X)$ will also be called $\omega$-\emph{Rudin sets}. The space $X$ is called $\omega$-\emph{Rudin space}, if $\ir_c(X)=\mathsf{RD}_\omega(X)$, that is, all irreducible closed subsets of $X$ are $\omega$-Rudin sets.
\end{definition}

\begin{definition}\label{omega WDspace}(\cite{xu-shen-xi-zhao2})
	 A subset $A$ of a $T_0$ space $X$ is called an $\omega$-\emph{well}-\emph{filtered determined set}, $\wdd_\omega$ \emph{set} for short, if for any continuous mapping $ f:X\longrightarrow Y$
to an $\omega$-well-filtered space $Y$, there exists a unique $y_A\in Y$ such that $\overline{f(A)}=\overline{\{y_A\}}$.
Denote by $\mathsf{WD}_\omega(X)$ the set of all closed $\omega$-well-filtered determined subsets of $X$. The space $X$ is called $\omega$-\emph{well}-\emph{filtered determined}, $\omega$-$\mathsf{WD}$ \emph{space} for short, if $\ir_c(X)=\mathsf{WD}_\omega(X)$, that is, all irreducible closed subsets of $X$ are $\omega$-well-filtered determined.
\end{definition}

\begin{proposition}\label{omeg rudin is omega WF} (\cite{xu-shen-xi-zhao2})
	Let $X$ be a $T_0$ space. Then $\mathcal{S}_c(X)\subseteq \mathcal{D}_c^\omega(X)\subseteq \mathsf{RD}_\omega(X)\subseteq\mathsf{WD}_\omega(X)\subseteq\ir_c(X)$.
\end{proposition}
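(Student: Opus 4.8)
The plan is to verify the four inclusions in turn. The inclusion $\mathcal S_c(X)\subseteq\mathcal D_c^\omega(X)$ is immediate, since a one-point set is countable and directed. For $\mathcal D_c^\omega(X)\subseteq\mathsf{RD}_\omega(X)$, given $D\in\mathcal D^\omega(X)$ I would use the family $\mathcal K=\{\ua x:x\in D\}$: each $\ua x$ is the saturation of the compact set $\{x\}$, so $\mathcal K\subseteq\mk(X)$, and $\mathcal K$ is countable because $D$ is and filtered in the Smyth order because $D$ is directed. Since $x\in\overline D\cap\ua x$ for each $x\in D$ we get $\overline D\in M(\mathcal K)$, and if a closed $A\subseteq\overline D$ lies in $M(\mathcal K)$ then every $x\in D$ lies below some point of $A$, hence $x\in A$ because closed sets are lower sets; thus $D\subseteq A$, so $A=\overline D$. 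Therefore $\overline D\in m(\mathcal K)$, i.e. $\overline D\in\mathsf{RD}_\omega(X)$.

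For $\mathsf{RD}_\omega(X)\subseteq\mathsf{WD}_\omega(X)$ — the main content — fix $A\in\mathsf{RD}_\omega(X)$ with a witnessing countable filtered $\mathcal K\subseteq\mk(X)$, $A\in m(\mathcal K)$, and let $f:X\to Y$ be continuous with $Y$ $\omega$-well-filtered. A generic point of $\overline{f(A)}$ is automatically unique (as $Y$ is $T_0$), so only existence is at issue. I would transport $\mathcal K$ to $Y$ via $\mathcal K''=\{\ua_Y f(A\cap K):K\in\mathcal K\}$: each $A\cap K$ is nonempty (since $A\in M(\mathcal K)$) and compact (being closed in the compact set $K$), so $\ua_Y f(A\cap K)\in\mk(Y)$, and $\mathcal K''$ is again countable and filtered with $\overline{f(A)}\in M(\mathcal K'')$ (picking $x\in A\cap K$ gives $f(x)\in\overline{f(A)}\cap\ua_Y f(A\cap K)$). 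By Lemma~\ref{t ruding} choose a closed $B\subseteq\overline{f(A)}$ with $B\in m(\mathcal K'')$. Everything then hinges on the claim that \emph{in an $\omega$-well-filtered space, every minimal closed set meeting all members of a countable filtered family of compact saturated sets has a generic point.}

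To prove the claim for $B\in m(\mathcal L_0)$, I would pass to $\mathcal L=\{\ua_Y(B\cap L):L\in\mathcal L_0\}$ — still countable, filtered, and with nonempty members — verify that $B$ is still a minimal member of $M(\mathcal L)$ (using that each $L$ is an upper set), and observe that $\bigcap\mathcal L$ cannot lie in the open set $Y\setminus B$: otherwise $\omega$-well-filteredness would give some $\ua_Y(B\cap L)\subseteq Y\setminus B$, contradicting $\emptyset\neq B\cap L\subseteq B\cap\ua_Y(B\cap L)$. Any $y_0\in B\cap\bigcap\mathcal L$ then satisfies $\overline{\{y_0\}}\subseteq B$ and $\overline{\{y_0\}}\in M(\mathcal L)$, so minimality forces $\overline{\{y_0\}}=B$. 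Returning to $f$: $f^{-1}(B)$ is closed and $A\cap f^{-1}(B)\in M(\mathcal K)$, because for each $K$ a point of $B\cap\ua_Y f(A\cap K)$ lies above some $f(x)$ with $x\in A\cap K$, and $f(x)\in B$ since $B$ is a lower set, so $x\in A\cap K\cap f^{-1}(B)$; minimality of $A$ in $M(\mathcal K)$ then forces $A\subseteq f^{-1}(B)$, i.e. $f(A)\subseteq B$, so $\overline{f(A)}=B=\overline{\{y_0\}}$. Finally, for $\mathsf{WD}_\omega(X)\subseteq\ir_c(X)$, I would apply the definition of an $\omega$-$\mathsf{WD}$ set to the canonical embedding $\eta_X:X\to X^s$, using that the sobrification $X^s=P_H(\ir_c(X))$ is sober, hence $\omega$-well-filtered; this gives $\mathcal F\in\ir_c(X)$ with $\cl_{X^s}\eta_X(A)=\overline{\{\mathcal F\}}$. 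A direct computation in the lower Vietoris topology shows $\cl_{X^s}\eta_X(A)=\{G\in\ir_c(X):G\subseteq A\}$ and $\overline{\{\mathcal F\}}=\{G\in\ir_c(X):G\subseteq\mathcal F\}$, and evaluating the resulting equality on the principal closures $\overline{\{x\}}$ for $x\in A$, resp. $x\in\mathcal F$, yields $A\subseteq\mathcal F$ and $\mathcal F\subseteq A$, so $A=\mathcal F\in\ir_c(X)$.

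The step I expect to be hardest is the italicized claim above — that a minimal closed transversal of a countable filtered family of compact saturated sets in an $\omega$-well-filtered space has a generic point — together with the surrounding bookkeeping: the point of choosing $\mathcal K''=\{\ua_Y f(A\cap K)\}$ rather than the coarser $\{\ua_Y f(K)\}$, and of the pull-back along $f$, is precisely to upgrade the conclusion from ``$B=\overline{\{y_0\}}$ for some closed $B\subseteq\overline{f(A)}$'' to ``$\overline{f(A)}=\overline{\{y_0\}}$'', which is what the definition of an $\omega$-$\mathsf{WD}$ set demands.
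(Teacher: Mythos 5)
Your proof is correct. Note that the paper itself gives no proof of this proposition -- it is imported verbatim from \cite{xu-shen-xi-zhao2} -- so there is nothing in the present text to compare against line by line; but your argument is exactly the standard one (it parallels the non-countable version, Proposition~\ref{DRWIsetrelation}, also cited without proof). All four inclusions check out: the key third inclusion correctly transports the witnessing family via $\ua_Y f(A\cap K)$ (rather than $\ua_Y f(K)$, which would lose the minimality needed at the end), extracts a minimal closed transversal $B\subseteq\overline{f(A)}$ by the topological Rudin Lemma, produces a generic point of $B$ from $\omega$-well-filteredness applied to $\{\ua_Y(B\cap L)\}$, and pulls back along $f$ to force $f(A)\subseteq B$; and the last inclusion correctly exploits sobriety (hence $\omega$-well-filteredness) of $X^s$ together with the identification of $\cl_{X^s}\eta_X(A)$ with $\{G\in\ir_c(X):G\subseteq A\}$. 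One incidental remark: you implicitly use Lemma~\ref{t ruding} in its intended form (the minimal set is the closed subset $A$ of $C$, i.e.\ ``$A\in m(\mathcal K)$''), which is the correct reading of that lemma despite the typo in its statement in the paper.
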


By Proposition \ref{omeg rudin is omega WF}, every $\omega$-Rudin space is $\omega$-well-filtered determined.

\begin{definition}\label{countable DC space}
	A $T_0$ space $X$ is called a \emph{countable directed closure space}, $\omega$-$\mathsf{DC}$ \emph{space} for short, if $\ir_c(X)=\mathcal{D}_c^{\omega}(X)$, that is, for each $A\in \ir_c(X)$, there exists a countable directed subset of $X$ such that $A=\overline{D}$.
\end{definition}

\begin{theorem}\label{sober equiv using omega RD and omega WD}  (\cite{xu-shen-xi-zhao2}) For a $T_0$ space $X$, the following conditions are equivalent:
	\begin{enumerate}[\rm (1)]
		\item $X$ is sober.
		\item $X$ is an $\omega$-$\mathsf{DC}$ and $\omega$-$d$-space.
        \item $X$ is an $\omega$-$\mathsf{DC}$ and $\omega$-well-filtered space.
		\item $X$ is an $\omega$-Rudin and $\omega$-well-filtered space.
		\item $X$ is an $\omega$-well-filtered determined and $\omega$-well-filtered space.
	\end{enumerate}
\end{theorem}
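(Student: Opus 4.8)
The plan is to establish the equivalence by proving $(1)\Rightarrow(2),(3),(4),(5)$ together with $(2)\Rightarrow(1)$, $(3)\Rightarrow(2)$, $(4)\Rightarrow(5)$ and $(5)\Rightarrow(1)$; chasing these links gives every $(i)\Leftrightarrow(j)$. All of this rests on three facts already recorded above: Remark~\ref{sober implies WF implies d-space} (sobriety $\Rightarrow$ well-filteredness $\Rightarrow$ $d$-space); the chain $\mathcal S_c(X)\subseteq\mathcal D_c^\omega(X)\subseteq\mathsf{RD}_\omega(X)\subseteq\mathsf{WD}_\omega(X)\subseteq\ir_c(X)$ of Proposition~\ref{omeg rudin is omega WF}; and the fact that every $\omega$-well-filtered space is an $\omega$-$d$-space (\cite[Proposition~3.11]{xu-shen-xi-zhao2}). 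The only step with genuine content is $(5)\Rightarrow(1)$; the rest is bookkeeping.

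For $(1)\Rightarrow(2),(3),(4),(5)$: if $X$ is sober then it is a $d$-space, hence an $\omega$-$d$-space (a countable directed set is directed), and it is well-filtered, hence $\omega$-well-filtered (a countable filtered family is filtered); moreover $\ir_c(X)=\mathcal S_c(X)$, so the chain of Proposition~\ref{omeg rudin is omega WF} collapses and $\mathcal D_c^\omega(X)=\mathsf{RD}_\omega(X)=\mathsf{WD}_\omega(X)=\ir_c(X)$, which is precisely the $\omega$-$\mathsf{DC}$, $\omega$-Rudin and $\omega$-$\mathsf{WD}$ properties. For $(2)\Rightarrow(1)$: given $A\in\ir_c(X)$, the $\omega$-$\mathsf{DC}$ hypothesis produces a countable directed $D$ with $A=\overline D$, and the $\omega$-$d$-space hypothesis then gives $\overline D\in\mathcal S_c(X)$; uniqueness of the generic point is the $T_0$ axiom, so $X$ is sober. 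For $(3)\Rightarrow(2)$: an $\omega$-well-filtered space is an $\omega$-$d$-space, so $(3)$ already contains $(2)$. For $(4)\Rightarrow(5)$: $\mathsf{RD}_\omega(X)\subseteq\mathsf{WD}_\omega(X)\subseteq\ir_c(X)$ together with $\ir_c(X)=\mathsf{RD}_\omega(X)$ forces $\ir_c(X)=\mathsf{WD}_\omega(X)$.

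The crux is $(5)\Rightarrow(1)$. Assume $X$ is $\omega$-well-filtered and $\omega$-$\mathsf{WD}$, and let $A\in\ir_c(X)=\mathsf{WD}_\omega(X)$. The idea is to feed the identity map into the defining property of a $\mathsf{WD}_\omega$ set: since $X$ is itself $\omega$-well-filtered, $\mathrm{id}_X\colon X\to X$ is a continuous map into an $\omega$-well-filtered space, so there is a unique $y_A\in X$ with $\overline{A}=\overline{\{y_A\}}$; as $A$ is closed this says $A=\overline{\{y_A\}}$, and $T_0$-ness gives the uniqueness of such a point. Hence every irreducible closed subset of $X$ has a unique generic point, i.e.\ $X$ is sober. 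I expect the only real subtlety here to be the conceptual one of noticing that the $\omega$-well-filteredness of $X$ is exactly what licenses this use of the identity map; it is worth noting explicitly that neither hypothesis in $(5)$ may be dropped, since there exist $\omega$-well-filtered $T_0$ spaces that are not sober.
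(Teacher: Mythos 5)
Your proof is correct and complete: the implication cycle closes, and the key step $(5)\Rightarrow(1)$ --- applying the defining property of a $\mathsf{WD}_\omega$ set to the identity map on the $\omega$-well-filtered space $X$ itself --- is exactly the standard argument from \cite{xu-shen-xi-zhao2}, where this theorem is actually proved (the present paper only quotes it without proof). The remaining implications are the same bookkeeping via Proposition~\ref{omeg rudin is omega WF} and the fact that $\omega$-well-filtered implies $\omega$-$d$-space, so there is nothing to add.
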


\section{$\omega^\ast$-Scott topologies and $\omega^\ast$-$d$-spaces}
We now introduce and study two new types of spaces.

\begin{definition}\label{countably-directed}  A nonempty subset $D$ of a poset $P$ is called \emph{countably-directed} if every nonempty countable subset of $D$ have an upper bound in $D$. The set of all countably-directed sets of $P$ is denoted by $\mathcal D^{\omega^{\ast}}(P)$. The poset $P$ is called a
\emph{countably-directed complete poset}, or $\omega^{\ast}$-\emph{dcpo} for short, if for any
$D\in \mathcal D^{\omega^\ast}(P)$, $\bigvee D$ exists in $P$.
\end{definition}

Clearly, $\{\downarrow x : x\in P\}\subseteq \mathcal D^{\omega^\ast}(P)\subseteq \mathcal D (P)$.

\begin{example}\label{directed not countably-directed} For the countable chain $\mn$ (with the usual order of natural numbers), $(\mn^{(<\omega)}, \subseteq)$ is directed in $2^{\mn}$, but not countably-directed.
\end{example}

\begin{definition}\label{omega ast Scott topology} A subset $U$ of a poset $P$ is $\omega^{\ast}$-\emph{Scott open} if
(i) $U=\mathord{\uparrow}U$ and (ii) for any countably-directed subset $D$ with
$\bigvee D$ existing, $\bigvee D\in U$ implies $D\cap
U\neq\emptyset$. All $\omega^{\ast}$-Scott open subsets of $P$ form a topology,
called the $\omega^{\ast}$-\emph{Scott topology} on $Q$ and
denoted by $\sigma_{\omega^\ast}(Q)$. Let $\Sigma_{\omega^\ast}~\!\! Q=(Q,\sigma_{\omega^\ast}(Q))$.
\end{definition}

Clearly, $\upsilon (P)\subseteq \sigma (P)\subseteq \sigma_{\omega^\ast} (P)\subseteq \alpha (P)$. In general, $\sigma (P)\neq \sigma_{\omega^\ast} (P)$ as shown in Example \ref{omega-d-space needed} below.

It is straightforward to verify the following two results (cf. \cite[Proposition II-2.1]{redbook}).

\begin{proposition}\label{omega ast Scott continuous} A continuous function $f : X \rightarrow Y$ from an $\omega^\ast$-$d$-space $X$ to any $T_0$ space $Y$ preserves countably-directed sups in the specialization orders.
\end{proposition}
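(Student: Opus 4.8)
The plan is to mimic the standard proof that a Scott-continuous map out of a dcpo preserves directed sups, adapted to the countably-directed setting and to the fact that the codomain is merely $T_0$. First I would fix a countably-directed set $D \in \mathcal D^{\omega^\ast}(X)$ such that $\bigvee D$ exists in the specialization order $\leq_X$; write $d = \bigvee D$. The goal is to show that $f(d) = \bigvee_{x\in D} f(x)$ in $\leq_Y$. Since $f$ is continuous, it is monotone for the specialization orders, so $f(x) \leq_Y f(d)$ for all $x \in D$; thus $f(d)$ is an upper bound of $f(D)$, and it remains to show it is the least one.

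Next I would take an arbitrary upper bound $y \in Y$ of $f(D)$, i.e. $f(x) \leq_Y y$ for all $x \in D$, and show $f(d) \leq_Y y$; equivalently, by the definition of the specialization order, every open $U \in \mathcal O(Y)$ containing $f(d)$ also contains $y$ — wait, more directly I show $f(d) \in \overline{\{y\}}$ by showing $y \in U$ whenever $f(d) \in U$. Hmm, cleaner: I show that for every $U \in \mathcal O(Y)$, $f(d) \in U$ implies $y \in U$. Suppose $f(d) \in U$. Then $d \in f^{-1}(U)$, which is open in $X$, hence $\omega^\ast$-Scott open by the assumption $\mathcal O(X) \subseteq \sigma_{\omega^\ast}(X)$ (this is exactly the defining property of an $\omega^\ast$-$d$-space: $X$ is an $\omega^\ast$-dcpo and $\mathcal O(X) \subseteq \sigma_{\omega^\ast}(X)$). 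Since $D$ is countably-directed with $\bigvee D = d \in f^{-1}(U)$, condition (ii) of $\omega^\ast$-Scott openness gives some $x_0 \in D$ with $x_0 \in f^{-1}(U)$, i.e. $f(x_0) \in U$. Because $f(x_0) \leq_Y y$, $y$ lies in $\overline{\{y\}} \supseteq$... no: $f(x_0) \leq_Y y$ means $f(x_0) \in \overline{\{y\}}$, so every open set containing $f(x_0)$ contains $y$; hence $y \in U$. This proves $f(d) \leq_Y y$, so $f(d)$ is the least upper bound of $f(D)$.

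There is no serious obstacle here; the one point that needs care is unpacking the $T_0$ specialization order correctly, since $Y$ is not assumed to be a $d$-space or even to have the relevant sups a priori — but the argument above shows $\bigvee f(D)$ exists in $Y$ and equals $f(d)$ without assuming it in advance, which is the right reading of "preserves countably-directed sups." I should also note at the outset that the notion "$\omega^\ast$-$d$-space" must be the evident analogue of Definition of $d$-space (namely: $X$ is an $\omega^\ast$-dcpo under $\leq_X$ and $\mathcal O(X) \subseteq \sigma_{\omega^\ast}(X)$); with that in hand the proof is the routine verification sketched above, and indeed the paper flags it as "straightforward to verify."
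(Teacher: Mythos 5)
Your argument is correct and is exactly the standard one the paper has in mind (it omits the proof, calling it straightforward and citing the classical Proposition II-2.1 of \cite{redbook} for Scott continuity): monotonicity of $f$ gives that $f(\bigvee D)$ is an upper bound of $f(D)$, and for any other upper bound $y$ the openness of $f^{-1}(U)$ together with $\mathcal O(X)\subseteq\sigma_{\omega^\ast}(X)$ yields some $x_0\in D$ with $f(x_0)\in U$, whence $y\in U$ and $f(\bigvee D)\leq_Y y$. Your handling of the direction of the specialization order ($u\leq v$ iff $u\in\overline{\{v\}}$) and of the fact that $\bigvee f(D)$ need not be assumed to exist in advance is also right.
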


\begin{proposition}\label{omega ast Scott continuous=preserves omega ast sups} Let $P, Q$ be posets and $f : P \rightarrow Q$. Then the following two conditions are equivalent:
\begin{enumerate}[\rm (1)]
\item $f : \Sigma_{\omega\ast} P \rightarrow \Sigma_{\omega^\ast} Q$ is continuous.
\item $f$ preserves countably-directed sups, that is, for every $D\in \mathcal D_{\omega^\ast} (P)$ for which $\bigvee D$ exists, $\bigvee f(D)$ exists and $f(\vee D)=\vee f(D)$.
\end{enumerate}
\end{proposition}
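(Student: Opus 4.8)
The plan is to transcribe, essentially verbatim, the classical proof that a map of posets is Scott-continuous iff it preserves directed sups (\cite[Proposition II-2.1]{redbook}), with ``directed'' replaced by ``countably-directed'' throughout. Two preliminary observations carry the whole argument. First, since $\upsilon(P)\subseteq\sigma_{\omega^\ast}(P)\subseteq\alpha(P)$, the specialization order of $\Sigma_{\omega^\ast}P$ coincides with $\leq_P$; hence any continuous $f:\Sigma_{\omega^\ast}P\to\Sigma_{\omega^\ast}Q$ is monotone, and a monotone image of a countably-directed set is again countably-directed (immediate from Definition \ref{countably-directed}). Second, for every $y\in Q$ the set $Q\setminus{\downarrow}y$ is $\omega^\ast$-Scott open: it is an upper set, and if $D$ is countably-directed with $\bigvee D$ existing and $D\subseteq{\downarrow}y$, then $\bigvee D\leq y$, i.e.\ $\bigvee D\notin Q\setminus{\downarrow}y$, which is exactly the contrapositive of condition (ii) in Definition \ref{omega ast Scott topology}.

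For (1)$\Rightarrow$(2), assume $f$ is $\omega^\ast$-Scott continuous, hence monotone. Let $D\in\mathcal D^{\omega^\ast}(P)$ with $s=\bigvee D$ existing. Then $f(D)$ is countably-directed and $f(s)$ is an upper bound of $f(D)$. To see it is the least one, let $y$ be any upper bound of $f(D)$ and suppose $f(s)\notin{\downarrow}y$. By the second observation $f^{-1}(Q\setminus{\downarrow}y)$ is $\omega^\ast$-Scott open and contains $s=\bigvee D$, so it meets $D$; this yields $d\in D$ with $f(d)\not\leq y$, contradicting the choice of $y$. Therefore $f(s)\leq y$, so $\bigvee f(D)=f(s)=f(\bigvee D)$.

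For (2)$\Rightarrow$(1), first note $f$ is monotone: for $x\leq y$ the set $\{x,y\}$ is countably-directed with $\bigvee\{x,y\}=y$, so (2) forces $\bigvee\{f(x),f(y)\}=f(y)$, i.e.\ $f(x)\leq f(y)$. Now fix $U\in\sigma_{\omega^\ast}(Q)$ and check $f^{-1}(U)\in\sigma_{\omega^\ast}(P)$: it is an upper set because $f$ is monotone and $U={\uparrow}U$; and if $D$ is countably-directed with $\bigvee D$ existing and $\bigvee D\in f^{-1}(U)$, then by (2) the set $f(D)$ is countably-directed with $\bigvee f(D)=f(\bigvee D)\in U$, so $\omega^\ast$-Scott openness of $U$ provides $d\in D$ with $f(d)\in U$, i.e.\ $D\cap f^{-1}(U)\neq\emptyset$. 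Hence $f^{-1}(U)$ is $\omega^\ast$-Scott open and $f$ is continuous.

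I do not expect a genuine obstacle here; the proof is a routine adaptation of the directed-set case. The only points that need a moment's care are the verification that complements of principal ideals are $\omega^\ast$-Scott open and the remark that monotone images of countably-directed sets stay countably-directed, together with the topology comparison $\upsilon(P)\subseteq\sigma_{\omega^\ast}(P)\subseteq\alpha(P)$ that pins down the specialization order of $\Sigma_{\omega^\ast}P$.
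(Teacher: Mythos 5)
Your proof is correct and is exactly the routine adaptation of \cite[Proposition II-2.1]{redbook} that the paper itself invokes (it states the result without proof, citing that proposition). The two points you single out for care — that $Q\setminus\mathord{\downarrow}y$ is $\omega^\ast$-Scott open and that monotone images of countably-directed sets are countably-directed — are indeed the only places where the directed-set argument needs checking, and both are handled correctly.
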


\begin{definition}\label{omega ast-d-space} A $T_0$ space $X$ is said to be an $\omega^{\ast}$-$d$-space (or an \emph{$\omega^\ast$-monotone convergence space}), if every subset $D$ countably-directed relative to the specialization order of $X$ has a sup, and the relation $\mathrm{sup}~D\in U$ for any open set $U$ of $X$ implies $D\cap U\neq\emptyset$.
\end{definition}

For a $T_0$ space $X$ (endowed with the specialization order), let $\mathcal D_c^{\omega^{\ast}}(X)=\{\overline{D} : D\in \mathcal D^{\omega^{\ast}}(X)\}$. Then $\mathcal S_c(X)\subseteq \mathcal D_c^{\omega^{\ast}}(X)\subseteq \mathcal D_c(X)$.

\begin{proposition}\label{omega ast d-space charac} For a $T_0$ space $X$, the following conditions are equivalent:
\begin{enumerate}[\rm (1) ]
	        \item $X$ is an $\omega^\ast$-$d$-space.
            \item $\mathcal D_{c}^{\omega^\ast}(X)=\mathcal S_c(X)$, that is, for any $D\in\mathcal D^{\omega^\ast}(X)$, the closure of $D$ has a (unique) generic point.
            \item $X$ (with the specialization order $\leq_X$) is an $\omega^\ast$-dcpo and $\mathcal O(X)\subseteq \sigma_{\omega^\ast}(X)$.
            \item  For any $D\in \mathcal D^{\omega^\ast}(X)$ and $U\in \mathcal O(X)$, $\bigcap\limits_{d\in D}\ua d\subseteq U$ implies $\ua d \subseteq U$ \emph{(}i.e., $d\in U$\emph{)} for some $d\in D$.
            \item  For any $D\in \mathcal D^{\omega^\ast}(X)$ and $A\in \mathcal C(X)$, if $D\subseteq A$, then $A\cap\bigcap\limits_{d\in D}\ua d\neq\emptyset$.
            \item  For any $D\in \mathcal D^{\omega^\ast}(X)$ and $A\in \ir_c(X)$, if $D\subseteq A$, then $A\cap\bigcap\limits_{d\in D}\ua d\neq\emptyset$.
            \item  For any $D\in \mathcal D^{\omega^\ast}(X)$, $\overline{D}\cap\bigcap\limits_{d\in D}\ua d\neq\emptyset$.
\end{enumerate}
\end{proposition}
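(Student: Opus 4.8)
The plan is to prove the seven conditions equivalent by establishing the cyclic chain of implications
$(1)\Leftrightarrow(2)\Rightarrow(3)\Rightarrow(4)\Rightarrow(5)\Rightarrow(6)\Rightarrow(7)\Rightarrow(2)$.
The only auxiliary facts I would invoke are Remark \ref{A is closure of x then sup A is x} (if $\overline{A}=\overline{\{x\}}$ then $\bigvee A$ exists and equals $x$); the elementary observations that every open set is an upper set in the specialization order and that an open set meeting the closure of a set already meets the set; the fact that a countably-directed set is directed, hence irreducible, so that Lemma \ref{irrsubspace} applies to its closure; and $T_0$-ness for the uniqueness of generic points. The key bookkeeping remark is that for $D\in\mathcal D^{\omega^\ast}(X)$ the set $\bigcap_{d\in D}\ua d$ is precisely the set of upper bounds of $D$, so whenever $\bigvee D$ exists it lies in $\bigcap_{d\in D}\ua d$; this is the bridge between the ``supremum'' formulations $(1)$--$(3)$ and the ``intersection of up-sets'' formulations $(4)$--$(7)$.

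For $(1)\Leftrightarrow(2)$: given $D\in\mathcal D^{\omega^\ast}(X)$ with $x=\bigvee D$, one has $D\subseteq\overline{\{x\}}$ since $x$ is an upper bound, and $x\in\overline{D}$ since otherwise $X\setminus\overline{D}$ would be an open neighbourhood of $\bigvee D$ disjoint from $D$; hence $\overline{D}=\overline{\{x\}}$, which is $(2)$. Conversely, if $\overline{D}=\overline{\{x\}}$ then $x=\bigvee D$ by Remark \ref{A is closure of x then sup A is x}, and if $x\in U\in\mathcal O(X)$ then $U$ meets $\overline{D}=\overline{\{x\}}$, hence meets $D$, which is exactly the defining property of an $\omega^\ast$-$d$-space (Definition \ref{omega ast-d-space}). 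For $(2)\Rightarrow(3)$: condition $(2)$ forces every countably-directed subset to have a supremum, so $X$ is an $\omega^\ast$-dcpo; and for $U\in\mathcal O(X)$ we know $U$ is an upper set, while if $\bigvee D\in U$ then as above $U$ meets $\overline{D}=\overline{\{\bigvee D\}}$, hence meets $D$, so $U\in\sigma_{\omega^\ast}(X)$ by Definition \ref{omega ast Scott topology}.

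For $(3)\Rightarrow(4)$: since $X$ is an $\omega^\ast$-dcpo, $x=\bigvee D$ exists and lies in $\bigcap_{d\in D}\ua d\subseteq U$; as $U$ is $\omega^\ast$-Scott open, $D\cap U\neq\emptyset$. For $(4)\Rightarrow(5)$: if $D\subseteq A\in\mathcal C(X)$ but $A\cap\bigcap_{d\in D}\ua d=\emptyset$, then $\bigcap_{d\in D}\ua d\subseteq X\setminus A\in\mathcal O(X)$, so $(4)$ yields some $d\in D$ with $d\in X\setminus A$, contradicting $D\subseteq A$. $(5)\Rightarrow(6)$ is immediate since $\ir_c(X)\subseteq\mathcal C(X)$. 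For $(6)\Rightarrow(7)$: $D$ is directed, hence irreducible, so $\overline{D}\in\ir_c(X)$, and applying $(6)$ with $A=\overline{D}\supseteq D$ gives $\overline{D}\cap\bigcap_{d\in D}\ua d\neq\emptyset$. Finally, for $(7)\Rightarrow(2)$: choose $x\in\overline{D}\cap\bigcap_{d\in D}\ua d$; then $x$ is an upper bound of $D$, so $D\subseteq\overline{\{x\}}$ and hence $\overline{D}\subseteq\overline{\{x\}}$, while $x\in\overline{D}$ gives the reverse inclusion, so $\overline{D}=\overline{\{x\}}\in\mathcal S_c(X)$; combined with the always-valid inclusion $\mathcal S_c(X)\subseteq\mathcal D_c^{\omega^\ast}(X)$ and the $T_0$ uniqueness of the generic point, this is $(2)$.

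Every step is a short, direct verification, so there is no genuine obstacle; the only points deserving attention are keeping track of which notion of directedness is in force (countably-directed sets are directed, which is what legitimizes passing to irreducibility and applying Lemma \ref{irrsubspace} in $(6)\Rightarrow(7)$) and invoking Remark \ref{A is closure of x then sup A is x} at the right moments so that the generic point produced in the ``closure'' formulations is recognised as the genuine supremum demanded by the ``$\omega^\ast$-dcpo'' formulations.
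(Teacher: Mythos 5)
Your proof is correct and follows essentially the same route as the paper: the same cycle $(1)\Rightarrow(2)\Rightarrow(3)\Rightarrow(4)\Rightarrow(5)\Rightarrow(6)\Rightarrow(7)$, with the only cosmetic difference that you close the loop at $(2)$ (via the already-established $(2)\Rightarrow(1)$) where the paper returns directly to $(1)$, and you spell out the $(6)\Rightarrow(7)$ step (countably-directed $\Rightarrow$ directed $\Rightarrow$ irreducible) that the paper marks as trivial. No gaps.
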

\begin{proof} (1) $\Rightarrow$ (2): Let $D\in\mathcal D^{\omega^\ast}(X)$. Then by (1), $\bigvee D$ exists and the relation $\bigvee D\in U$ for any open set $U$ of $X$ implies $D\cap U\neq\emptyset$. Therefor, $\overline{D}=\overline{\{\mathrm{sup}~D\}}$.

(2) $\Rightarrow$ (3): For each $D\in \mathcal D^{\omega^\ast}(X)$ and $A\in \mathcal C(X)$ with $D\subseteq A$, by condition (2) there is $x\in X$ such that $\overline{D}=\overline{\{x\}}$, and consequently, $\bigvee D=x$ and $\bigvee D \in A$ since $\overline{D}\subseteq A$. Thus $X$ is an $\omega^\ast$-dcpo and $\mathcal O(X)\subseteq \sigma_{\omega^\ast}(X)$.

(3) $\Rightarrow$ (4): Suppose that $D\in \mathcal D^{\omega^\ast}(X)$ and $U\in \mathcal O(X)$ with $\bigcap\limits_{d\in D}\ua d\subseteq U$. Then by condition (3), $\ua \bigvee D=\bigcap\limits_{d\in D}\ua d\subseteq U\in \sigma_{\omega^\ast} (X)$. Therefore, $\bigvee D\in U$, and whence
$d\in U$ for some $d\in D$.

(4) $\Rightarrow$ (5): If $A\cap\bigcap\limits_{d\in D}\ua d=\emptyset$, then $\bigcap\limits_{d\in D}\ua d \subseteq X\setminus A$. By condition (4), $\ua d\subseteq X\setminus A$ for some $d\in D$, which is in contradiction with $D\subseteq A$.

(5) $\Rightarrow$ (6) $\Rightarrow$ (7): Trivial.

(7) $\Rightarrow$ (1): For each $D\in \mathcal D^{\omega^\ast}(X)$ and $A\in \mathcal C(X)$ with $D\subseteq A$, by condition (6), we have $\overline{D}\cap\bigcap\limits_{d\in D}\ua d\neq\emptyset$. Select an $x\in \overline{D}\cap\bigcap\limits_{d\in D}\ua d$. Then $D\subseteq \da x\subseteq \overline{D}$, and hence $\overline{D}=\da x$. Thus $X$ is an $\omega^\ast$-$d$-space.
\end{proof}

By Proposition \ref{omega ast d-space charac}, every $d$-space is an $\omega$-$d$-space, and for any $\omega^\ast$-dcpo $P$, $\Sigma_{\omega^\ast} P$ is an $\omega^\ast$-$d$-space. Let $Q=(\mn^{(<\omega)}, \subseteq)$. Then $Q$ is an $\omega^\ast$-dcpo but not a dcpo. If $P$ is any $\omega^\ast$-dcpo but not a dcpo, then $\Sigma_{\omega^\ast} (P)$ is an $\omega^\ast$-$d$-space but not a $d$-space.

\begin{definition}\label{omega ast-DC space} A $T_0$ space $X$ is called a \emph{countable}-\emph{directed closure space}, or $\omega^{\ast}$-$\mathbf{DC}$ \emph{space} for short, if $\ir_c (X)=\mathcal D_c^{\omega^{\ast}}(X)$, that is, for each $A\in \ir_c (X)$, there exists a countably-directed subset $D$ of $X$ such that $A =\overline{D}$.
\end{definition}

Now we introduce another type of weak well-filtered spaces.

\begin{definition}\label{omega ast WF space}
	A $T_0$ space $X$ is called $\omega^\ast$-\emph{well}-\emph{filtered}, if for any countable-filtered family $\{K_i:i\in I\}\subseteq \mk (X)$ (that is, $\{K_i:i\in I\}\in \mathcal D^{\omega^\ast}(\mk (X))$) and  $U\in\mathcal O(X)$, it satisfies
		$$\bigcap_{i\in I}K_i\subseteq U \ \Rightarrow \  \exists j\in I, K_j\subseteq U.$$
\end{definition}

Clearly, every well-filtered space is $\omega^\ast$-well-filtered. The converse implications  does not hold in general, as shown by the following example.

\begin{example}\label{omega ast WF is not WF} Let $P=(\mn^{(<\omega)}, \subseteq)$ and $X=\Sigma_{\omega^\ast} P$. It is easy to verify that any countably-directed subset of $P$ has a largest element. Therefore, $\sigma_{\omega^\ast} (P)=\alpha (P)$ and $\mathsf{K}(X)=\{\ua F : F\in \mn^{(<\omega)}\}$, and hence $X$ is $\omega^\ast$-well-filtered as any countable-filtered family $\{K_i:i\in I\}\subseteq \mk (X)$ has a least element. Since $P$ is not a dcpo ($P$ is directed but has not a largest element), $X$ is not a $d$-space, and hence not well-filtered.
\end{example}

\begin{proposition}\label{omega ast WF is omega ast d-space} Every $\omega^\ast$-well-filtered space is an $\omega^\ast$-$d$-space.
\end{proposition}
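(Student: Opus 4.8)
The plan is to reduce everything to the characterization of $\omega^\ast$-$d$-spaces already available in Proposition \ref{omega ast d-space charac}, specifically the equivalence of (1) with (4). So, starting from an $\omega^\ast$-well-filtered space $X$, it will suffice to prove the following implication: for every $D\in \mathcal D^{\omega^\ast}(X)$ and every $U\in \mathcal O(X)$ with $\bigcap_{d\in D}\ua d\subseteq U$, there is some $d\in D$ with $\ua d\subseteq U$ (equivalently $d\in U$). Once this is shown, condition (4) of Proposition \ref{omega ast d-space charac} holds and hence $X$ is an $\omega^\ast$-$d$-space.

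Fix such a $D$ and $U$. First I would recall that for each $d\in X$ the set $\ua d$ is a nonempty compact saturated subset of $X$: it is an upper set (hence saturated in the specialization order), and any open cover of $\ua d$ has a member containing $d$, which then already contains all of $\ua d$. Thus $\mathcal K:=\{\ua d : d\in D\}\subseteq \mk(X)$.

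The one thing that genuinely needs checking is that $\mathcal K$ is a \emph{countable-filtered} family, i.e.\ $\mathcal K\in \mathcal D^{\omega^\ast}(\mk(X))$ with respect to the Smyth order $\sqsubseteq$. Given a nonempty countable subfamily $\{\ua d_n : n<\omega\}$ of $\mathcal K$ (with $d_n\in D$), use that $D$ is countably-directed to get $d\in D$ with $d_n\leq_X d$ for all $n<\omega$; then $\ua d\subseteq \ua d_n$, that is $\ua d_n\sqsubseteq \ua d$, for all $n$, so $\ua d\in\mathcal K$ is a Smyth-upper bound of $\{\ua d_n : n<\omega\}$. Hence $\mathcal K$ is countably-directed in $(\mk(X),\sqsubseteq)$. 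Now $\bigcap\mathcal K=\bigcap_{d\in D}\ua d\subseteq U$, so applying the definition of $\omega^\ast$-well-filteredness to the countable-filtered family $\mathcal K$ and the open set $U$ yields some $d_0\in D$ with $\ua d_0\subseteq U$, i.e.\ $d_0\in U$, which is exactly what was required.

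I do not expect any real obstacle here: the entire content is the (routine) observation that the principal upper sets $\ua d$, $d\in D$, form a countable-filtered family of compact saturated sets, after which the definition of $\omega^\ast$-well-filteredness applies verbatim. The only point demanding a moment's care is the orientation of the Smyth order — larger sets are $\sqsubseteq$-larger when they are smaller as sets — so that ``countably-directed in $D$'' correctly translates into ``countable-filtered in $\mk(X)$''; and, on the side of the conclusion, remembering that $\ua d_0\subseteq U$ is the same as $d_0\in U$.
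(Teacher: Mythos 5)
Your proof is correct and takes essentially the same route as the paper: both form the countable-filtered family $\{\ua d : d\in D\}\subseteq\mk(X)$ and apply $\omega^\ast$-well-filteredness, the paper doing so for the single open set $X\setminus\overline{D}$ to produce a generic point of $\overline{D}$ (condition (2)/(7) of Proposition \ref{omega ast d-space charac}), while you verify condition (4) for an arbitrary open $U$ and invoke the equivalence (4)$\Leftrightarrow$(1). The difference is only which clause of the characterization is checked, so the two arguments coincide in substance.
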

\begin{proof} Let $X$ be an $\omega^\ast$-well-filtered space and $D\in \mathcal D^{\omega^\ast} (X)$. Then $\{\ua d : d\in D\}\in \mathcal D^{\omega^\ast}(\mk (X))$. By the $\omega^\ast$-well-filteredness of $X$, we have $\bigcap_{d\in D}\ua d\nsubseteq X\setminus \overline{D}$ or, equivalently,  $\bigcap_{d\in D}\ua d\cap\overline{D}\neq \emptyset$. Therefore, there is $x\in \bigcap_{d\in D}\ua d\cap\overline{D}$, and hence $\overline{D}=\overline{\{x\}}$.
\end{proof}

In the following, using the topological Rudin's Lemma, we prove that a $T_0$ space $X$ is  $\omega^\ast$-well-filtered if{}f the Smyth power space of $X$ is $\omega^\ast$-well-filtered if{}f the Smyth power space of $X$ is $\omega^\ast$-$d$-space. The corresponding results for well-filteredness and $\omega$-well-filteredness are proved in \cite{xi-zhao-MSCS-well-filtered, xu-shen-xi-zhao2, xu-shen-xi-zhao1}.

\begin{theorem}\label{Smythomegawf}
	For a $T_0$ space $X$, the following conditions are equivalent:
\begin{enumerate}[\rm (1)]
		\item $X$ is $\omega^\ast$-well-filtered.
        \item $P_S(X)$ is an ${\omega^\ast}$-$d$-space.
        \item $P_S(X)$ is $\omega^\ast$-well-filtered.
\end{enumerate}
\end{theorem}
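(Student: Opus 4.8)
The plan is to establish the cycle $(1)\Rightarrow(3)\Rightarrow(2)\Rightarrow(1)$. Here $(3)\Rightarrow(2)$ is immediate from Proposition~\ref{omega ast WF is omega ast d-space}. For $(2)\Rightarrow(1)$ I would argue directly: given a countable-filtered family $\{K_i:i\in I\}\subseteq\mk(X)$ and $U\in\mathcal O(X)$ with $\bigcap_{i\in I}K_i\subseteq U$, observe that $\{K_i:i\in I\}$ is a countably-directed subset of $P_S(X)$ (since $\leq_{P_S(X)}=\sqsubseteq$); as $P_S(X)$ is an $\omega^\ast$-$d$-space, $\bigvee_{i\in I}K_i$ exists in $\mk(X)$, so by Lemma~\ref{sups in Smyth} $\bigcap_{i\in I}K_i\in\mk(X)$ and $\bigvee_{i\in I}K_i=\bigcap_{i\in I}K_i\in\Box_{\mk(X)}U$; since $\Box_{\mk(X)}U\in\mathcal O(P_S(X))$, the $\omega^\ast$-$d$-space property yields $K_j\subseteq U$ for some $j\in I$. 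This part is routine.

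The substantive implication is $(1)\Rightarrow(3)$, which I would prove by contradiction using the topological Rudin's Lemma, following the pattern of the analogous results for well-filteredness and $\omega$-well-filteredness cited above. Suppose $P_S(X)$ is not $\omega^\ast$-well-filtered, witnessed by a countable-filtered family $\{\mathcal K_i:i\in I\}\subseteq\mk(P_S(X))$ and an open set $\mathcal G$ of $P_S(X)$ with $\bigcap_{i\in I}\mathcal K_i\subseteq\mathcal G$ but $\mathcal K_i\cap\mathcal A\neq\emptyset$ for all $i$, where $\mathcal A=P_S(X)\setminus\mathcal G$. Being countably-directed, $\{\mathcal K_i:i\in I\}$ is directed, hence irreducible, in $P_S(P_S(X))$; so Lemma~\ref{t Rudin}, applied with $P_S(X)$ as the ambient space, gives a minimal irreducible closed set $\mathcal B\subseteq\mathcal A$ meeting every $\mathcal K_i$, and a second application of the lemma shows $\mathcal B$ is minimal even among all closed subsets of $P_S(X)$ meeting every $\mathcal K_i$. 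For each $i$ put $W_i=\bigcup(\mathcal K_i\cap\mathcal B)$: since $\mathcal K_i\cap\mathcal B$ is a closed, hence compact, subset of $\mathcal K_i$ in $P_S(X)$, the argument of Lemma~\ref{K union}(1) shows $W_i$ is compact in $X$, and it is an upper set, so $W_i\in\mk(X)$; moreover $W_i\neq\emptyset$ and $\{W_i:i\in I\}$ is countably-filtered in $\mk(X)$. Using that $\hat x:=\{K\in\mk(X):x\in K\}=\overline{\{\ua x\}}$ is closed in $P_S(X)$, the minimality of $\mathcal B$ forces $\bigcap_{i\in I}W_i=\bigcap\mathcal B=:Q$: for $x\in\bigcap_{i\in I}W_i$ the closed set $\mathcal B\cap\hat x$ meets every $\mathcal K_i$, hence equals $\mathcal B$, i.e. $x\in\bigcap\mathcal B$. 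Now $Q\neq\emptyset$ — otherwise $\omega^\ast$-well-filteredness of $X$ applied to $\{W_i\}$ and $U=\emptyset$ would force some $W_j=\emptyset$ — and $Q\in\mk(X)$, since in an $\omega^\ast$-well-filtered space countably-filtered intersections of compact saturated sets are again compact saturated (the standard open-cover argument). Picking $K\in\mathcal B\cap\mathcal K_i$ gives $Q\subseteq K$, whence $Q\in\mathcal K_i$ by saturatedness of $\mathcal K_i$; so $Q\in\bigcap_{i\in I}\mathcal K_i\subseteq\mathcal G$. Finally $Q\in\mathcal B$: otherwise some open $U\supseteq Q$ of $X$ contains no member of $\mathcal B$, yet $\bigcap_{i\in I}W_i=Q\subseteq U$, so $\omega^\ast$-well-filteredness of $X$ gives $W_j\subseteq U$ for some $j$, forcing each (nonempty) member of $\mathcal K_j\cap\mathcal B$ to be contained in $U$ — a contradiction. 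Hence $Q\in\mathcal B\subseteq\mathcal A=P_S(X)\setminus\mathcal G$, contradicting $Q\in\mathcal G$.

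I expect the main obstacle to be the bookkeeping surrounding the minimal set $\mathcal B$ obtained from Rudin's Lemma: transporting it to the countably-filtered family $\{W_i\}\subseteq\mk(X)$, proving the identity $\bigcap_{i\in I}W_i=\bigcap\mathcal B$ (the step that genuinely uses minimality of $\mathcal B$, via the second appeal to Rudin's Lemma), and then showing that this common set is simultaneously a bona fide element of $\mk(X)$, a member of every $\mathcal K_i$, and a point of $\mathcal B\subseteq\mathcal A$. The hypothesis that $X$ is $\omega^\ast$-well-filtered has to be invoked several times, and some care is needed to ensure each family to which it is applied really is countably-filtered.
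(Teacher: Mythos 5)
Your proof is correct, and its hard core is the same as the paper's: apply the topological Rudin Lemma to the countable-filtered family viewed as an irreducible subset of $P_S(P_S(X))$, pass to the compact sets $W_i=\bigcup(\mathcal B\cap\mathcal K_i)$ via Lemma~\ref{K union}, and exploit minimality of $\mathcal B$ through the closed sets $\Diamond_{\mk(X)}\overline{\{x\}}\cap\mathcal B$. The difference is the decomposition of the cycle. The paper proves $(1)\Rightarrow(2)\Rightarrow(3)\Rightarrow(1)$, so that when it reaches the Rudin-Lemma implication $(2)\Rightarrow(3)$ it already knows $P_S(X)$ is an $\omega^\ast$-$d$-space and can conclude in one stroke that $K=\bigcap_i K_i=\bigvee_i K_i$ lies in $\mk(X)$ and in the closed set $\mathcal A$ (closed sets of an $\omega^\ast$-$d$-space contain sups of their countably-directed subsets, Proposition~\ref{omega ast d-space charac}). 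You instead prove $(1)\Rightarrow(3)$ directly, which forces you to re-derive those two facts from $\omega^\ast$-well-filteredness of $X$ itself: the open-cover argument for $Q=\bigcap_i W_i\in\mk(X)$, and a separate application of well-filteredness (with a basic open $\Box U$ separating $Q$ from $\mathcal B$) to get $Q\in\mathcal B$. This costs a couple of extra invocations of the hypothesis but buys you a completely routine $(2)\Rightarrow(1)$ via Lemma~\ref{sups in Smyth}, in place of the paper's $(3)\Rightarrow(1)$ via the embedding $K\mapsto\ua_{\mk(X)}K$; both trades are fair. Two small points of bookkeeping: the ``second application'' of Rudin's Lemma is unnecessary, since the minimal set it produces is already minimal among \emph{all} closed sets meeting every $\mathcal K_i$ (this is also what the paper's own minimality step silently uses); and in forming $W_i$ you should pass to the saturation $\ua_{\mk(X)}(\mathcal B\cap\mathcal K_i)$ before invoking Lemma~\ref{K union}(1), which changes nothing since the union is unaffected.
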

\begin{proof}

(1) $\Rightarrow$ (2): Suppose that $X$ is an  $\omega^\ast$-well-filtered space. For any countable-filtered family $\mathcal K\subseteq \mk (X)$, by the $\omega^\ast$-well-filteredness of $X$,  $\bigcap \mathcal K \in \mk (X)$. Therefore, by Lemma \ref{sups in Smyth}, $\mk (X)$ is an ${\omega^\ast}$-dcpo. Clearly, by the $\omega^\ast$-well-filteredness of $X$, $\Box U\in \sigma_{\omega^\ast} (\mk (X))$ for any $U\in \mathcal O(X)$. Thus $P_S (X)$ is an ${\omega^\ast}$-$d$-space.

(2) $\Rightarrow$ (3): Suppose that $\{\mathcal K_i : i\in I\}\subseteq \mk(P_S(X))$ is countable-filtered, $\mathcal U\in \mathcal O(P_S(X))$, and $\bigcap\limits_{i\in I} \mathcal K_i \subseteq \mathcal U$. If $\mathcal K_i\not\subseteq \mathcal U$ for all $i\in I$, then by Lemma \ref{t Rudin}, $\mk (X)\setminus \mathcal U$ contains an irreducible closed subset $\mathcal A$ that still meets all $\mathcal K_i$ ($i\in I$). For each $i\in I$, let $K_i=\bigcup \ua_{\mk (X)} (\mathcal A\bigcap \mathcal K_i)$ ($=\bigcup (\mathcal A\bigcap \mathcal K_i$)). Then by Lemma \ref{K union}, $\{K_i : i\in I\}\subseteq \mk (X)$ is countable-filtered, and $K_i\in \mathcal A$ for all $i\in I$ since $\mathcal A=\da_{\mk (X)}\mathcal A$. Let $K=\bigcap\limits_{i\in I} K_i$. Then $K\in \mk (X)$ and $K=\bigvee_{\mk (X)} \{K_i : i\in I\}\in \mathcal A$ by Lemma \ref{sups in Smyth} and condition (2). We claim that $K\in \bigcap\limits_{i\in I}\mathcal K_i$. Suppose, on the contrary, that $K\not\in \bigcap\limits_{i\in I}\mathcal K_i$. Then there is a $j\in I$ such that $K\not\in \mathcal K_j$. Select a $G\in \mathcal A\bigcap \mathcal K_j$. Then $K\not\subseteq G$ (otherwise, $K\in \ua_{\mk (X)}\mathcal K_j=\mathcal K_j$, being a contradiction with $K\not\in \mathcal K_j$), and hence there is a $g\in K\setminus G$. It follows that $g\in K_i=\bigcup (\mathcal A\bigcap \mathcal K_i)$ for all $i\in I$ and $G\not\in \Diamond_{\mk (K)}\overline{\{g\}}$. For each $i\in I$, by $g\in K_i=\bigcup (\mathcal A\bigcap \mathcal K_i)$, there is a $K_i^g\in \mathcal A\bigcap \mathcal K_i$ such that $g\in K_i^g$, and hence $K_i^g\in\Diamond_{\mk (K)}\overline{\{g\}}\bigcap\mathcal A\bigcap \mathcal K_i$. Thus $\Diamond_{\mk (K)}\overline{\{g\}}\bigcap\mathcal A\bigcap \mathcal K_i\neq\emptyset$ for all $i\in I$. By the minimality of $\mathcal A$, we have $\mathcal A=\Diamond_{\mk (K)}\overline{\{g\}}\bigcap\mathcal A$, and consequently, $G\in \mathcal A\bigcap \mathcal K_j=\Diamond_{\mk (K)}\overline{\{g\}}\bigcap\mathcal A\bigcap \mathcal K_j$, which is a contradiction with $G\not\in \Diamond_{\mk (K)}\overline{\{g\}}$. Thus $K\in \bigcap\limits_{i\in I}\mathcal K_i\subseteq \mathcal U\subseteq \mk (X)\setminus \mathcal A$, being a contradiction with $K\in \mathcal A$. Therefore, $P_S(X)$  is $\omega^\ast$-well-filtered.

(3) $\Rightarrow$ (1): Suppose that $\mathcal K\subseteq \mathord{\mathsf K}(X)$ is countable-filtered, $U\in \mathcal O(X)$, and $\bigcap \mathcal K \subseteq U$. Let $\widetilde{\mathcal K}=\{\ua_{\mk (X)}K : K\in \mathcal K\}$. Then $\widetilde{\mathcal K}\subseteq \mk (P_S(X))$ is countable-filtered and $\bigcap \widetilde{\mathcal K} \subseteq \Box U$. By the $\omega^\ast$-well-filteredness of $P_S(X)$, there is a $K\in \mathcal K$ such that $\ua_{\mk (X)}K\subseteq \Box U$, and whence $K\subseteq U$, proving that $X$ is $\omega^\ast$-well-filtered.
\end{proof}

\begin{definition}\label{omega ast DC space}
	A $T_0$ space $X$ is called a {countably-directed closure space}, $\omega^\ast$-$\mathsf{DC}$ \emph{space} for short, if $\ir_c(X)=\mathcal{D}_c^{\omega^\ast}(X)$, that is, for each $A\in \ir_c(X)$, there exists a countably-directed subset of $X$ such that $A=\overline{D}$.
\end{definition}

By Remark \ref{sups in Smyth}, Proposition \ref{omega ast d-space charac} and Proposition \ref{omega ast WF is omega ast d-space}, we get the following result.

\begin{proposition}\label{sober equiv using omega ast DC} For any $T_0$ space $X$, the following conditions are equivalent:
	\begin{enumerate}[\rm (1)]
		\item $X$ is sober.
		\item $X$ is an $\omega^\ast$-$\mathsf{DC}$ and $\omega^\ast$-well-filtered space.
        \item $X$ is an $\omega^\ast$-$\mathsf{DC}$ and $\omega^\ast$-$d$-space.
     \end{enumerate}
\end{proposition}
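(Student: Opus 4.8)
The plan is to run the cycle $(1)\Rightarrow(2)\Rightarrow(3)\Rightarrow(1)$, since that makes all three conditions equivalent. The whole argument reduces to three facts already available in the excerpt: sober spaces are well-filtered (Remark~\ref{sober implies WF implies d-space}); every $\omega^\ast$-well-filtered space is an $\omega^\ast$-$d$-space (Proposition~\ref{omega ast WF is omega ast d-space}); and an $\omega^\ast$-$d$-space is exactly a $T_0$ space with $\mathcal D_c^{\omega^\ast}(X)=\mathcal S_c(X)$, i.e.\ the closure of every countably-directed set has a generic point (Proposition~\ref{omega ast d-space charac}, (1)$\Leftrightarrow$(2)). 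So the content is really a bookkeeping corollary of these.

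For $(1)\Rightarrow(2)$ I would argue as follows. If $X$ is sober it is well-filtered, and a well-filtered space is trivially $\omega^\ast$-well-filtered, because a countable-filtered family of compact saturated sets is in particular a filtered family; hence $X$ is $\omega^\ast$-well-filtered. For the $\omega^\ast$-$\mathsf{DC}$ property, one inclusion, $\mathcal D_c^{\omega^\ast}(X)\subseteq\ir_c(X)$, holds in every $T_0$ space: a countably-directed set is directed, hence irreducible, and by Lemma~\ref{irrsubspace} its closure is irreducible closed. For the reverse inclusion, let $A\in\ir_c(X)$; sobriety gives a unique $a\in X$ with $A=\overline{\{a\}}$, and since the singleton $\{a\}$ lies in $\mathcal D^{\omega^\ast}(X)$ we get $A\in\mathcal D_c^{\omega^\ast}(X)$. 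Thus $\ir_c(X)=\mathcal D_c^{\omega^\ast}(X)$ and $X$ is an $\omega^\ast$-$\mathsf{DC}$ space.

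Step $(2)\Rightarrow(3)$ is immediate from Proposition~\ref{omega ast WF is omega ast d-space}. For $(3)\Rightarrow(1)$, take $A\in\ir_c(X)$. Being an $\omega^\ast$-$\mathsf{DC}$ space, $X$ supplies some $D\in\mathcal D^{\omega^\ast}(X)$ with $A=\overline{D}$; being an $\omega^\ast$-$d$-space, Proposition~\ref{omega ast d-space charac} gives $x\in X$ with $\overline{D}=\overline{\{x\}}$, so $A=\overline{\{x\}}$. Uniqueness is just the $T_0$ axiom: $\overline{\{x\}}=\overline{\{y\}}$ forces $x\leq_X y$ and $y\leq_X x$, hence $x=y$. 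Therefore $X$ is sober. There is no substantial obstacle here; the only points needing any care are the two easy inclusions making up the $\omega^\ast$-$\mathsf{DC}$ condition and the $T_0$ uniqueness step, and everything deeper is packaged inside the three cited results.
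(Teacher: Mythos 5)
Your proof is correct and follows essentially the same route as the paper, which states the proposition as an immediate consequence of Proposition~\ref{omega ast d-space charac} and Proposition~\ref{omega ast WF is omega ast d-space} without writing out the details; your cycle $(1)\Rightarrow(2)\Rightarrow(3)\Rightarrow(1)$ simply makes those details explicit. The individual steps (well-filtered implies $\omega^\ast$-well-filtered since countably-directed families are filtered, singletons are countably-directed so sobriety gives the $\omega^\ast$-$\mathsf{DC}$ property, and the $T_0$ uniqueness of generic points) are all sound.
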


\section{First-countability of sobrifications and $\omega$-Rudin spaces}

In this section, we prove that if the sobrification of a $T_0$ space $X$ is first-countable, then $X$  is a $\omega$-Rudin space. Hence every $\omega$-well-filtered space having a first-countable sobrification is sober.

We first prove  two useful lemmas.

\begin{lemma}\label{countable minimal compact} Let $X$ be a $T_0$ space and $A\in \mathcal C  (X)$. For $\{U_n : n\in \mn \}\subseteq \mathcal O(X)$ with $U_1\supseteq U_2\supseteq ... \supseteq U_n\supseteq U_{n+1}\supseteq ...$, if $A\in m(\{U_n : n\in \mn \}$ and $x_n\in U_n \cap A$ for each $n\in \mn$, then every subset of $\{x_n : n\in \mn\})$ is compact.
\end{lemma}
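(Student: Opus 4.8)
The plan is to extract from the hypothesis $A\in m(\{U_n:n\in\mn\})$ a single covering property and then read off compactness from it. First I would prove the auxiliary claim: for every $V\in\mathcal O(X)$ with $V\cap A\neq\emptyset$ there is some $n\in\mn$ with $A\cap U_n\subseteq V$. Indeed, $A\setminus V=A\cap(X\setminus V)$ is a closed subset of $X$ which is properly contained in $A$ (properly, since $V$ meets $A$); as $A$ is a minimal member of $M(\{U_n:n\in\mn\})=\{B\in\mathcal C(X):B\cap U_n\neq\emptyset\text{ for all }n\in\mn\}$, the set $A\setminus V$ cannot lie in $M(\{U_n:n\in\mn\})$, so $(A\setminus V)\cap U_n=\emptyset$ for some $n$, which is exactly $A\cap U_n\subseteq V$.

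Next I would feed in the descending chain $U_1\supseteq U_2\supseteq\cdots$ together with the hypothesis $x_m\in U_m\cap A$. For every $m\geq n$ we have $x_m\in U_m\subseteq U_n$ and $x_m\in A$, so $x_m\in A\cap U_n$. Hence the auxiliary claim upgrades to the following: whenever an open set $V$ contains at least one of the points $x_m$ (equivalently, $V$ meets $A$), it already contains the whole tail $\{x_m:m\geq n\}$ for some $n\in\mn$, and therefore all but finitely many of the points $x_1,x_2,\ldots$.

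Finally I would deduce compactness. Fix $S\subseteq\{x_n:n\in\mn\}$; finite $S$ is trivially compact, so assume $S$ is infinite and let $\mathcal V\subseteq\mathcal O(X)$ be a cover of $S$. Choosing any point of $S$, it lies in some $V_0\in\mathcal V$, and since $V_0$ meets $A$, the previous paragraph supplies $n_0\in\mn$ with $\{x_m:m\geq n_0\}\subseteq V_0$; consequently $S\setminus V_0\subseteq\{x_1,\ldots,x_{n_0-1}\}$ is finite. Covering each of these finitely many points by a single member of $\mathcal V$ and adding $V_0$ produces a finite subcover, so $S$ is compact. The only step that needs an idea is the first one --- recognising that minimality of $A$ says precisely that every open set meeting $A$ swallows $A\cap U_n$ for some $n$; after that, the monotonicity of the $U_n$ and the condition $x_m\in U_m\cap A$ make the rest automatic, so I do not anticipate a real obstacle.
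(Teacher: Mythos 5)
Your proof is correct and rests on the same use of minimality as the paper's: a closed set of the form $A\cap(X\setminus V)$ meeting every $U_n$ must equal $A$, which in your contrapositive formulation becomes the tail lemma that any open $V$ meeting $A$ contains $A\cap U_n$ (hence the tail $\{x_m : m\geq n\}$) for some $n$, and this cleanly replaces the paper's two-case split. The only blemish is the parenthetical assertion that ``$V$ contains some $x_m$'' is \emph{equivalent} to ``$V$ meets $A$'' --- only the forward implication holds in general, and since only that implication is used, nothing breaks.
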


\begin{proof} Suppose $E\subseteq\{x_n : n\in \mn\}$ and $\{V_i:i\in I\}$ is an open cover of $E$, that is, $E\subseteq \bigcup_{i\in I}V_i$.

{Case 1.} $E\cap (X\setminus V_j)$ is finite for some $j\in I$.

Then there is $I_j\in I^{(<\omega)}$ such that $E\cap (X\setminus V_j)\subseteq \bigcup_{i\in I_j}V_i$, and hence $E\subseteq V_j \cup\bigcup_{i\in I_j}V_i$.

{Case 2.} $E\cap (X\setminus V_i)$ is infinite for all $i\in I$.

For each $i\in I$, since $U_1\supseteq U_2\supseteq ... \supseteq U_n\supseteq U_{n+1}\supseteq ...$ and $x_n\in U_n \cap A$ for each $n\in \mn$, we have that $U_n\cap A\cap (X\setminus V_i)\neq\emptyset$ for all $n\in \mn$, and hence $A\cap (X\setminus V_i)=A$ by the minimality of $A$. It follows that $A\subseteq \bigcap_{i\in I}(X\setminus V_i)=X\setminus \bigcup_{i\in I}V_i$. Therefore, $E\subseteq A\cap \bigcup_{i\in I}V_i=\emptyset$.

By Case 1 and Case 2, $E$ is compact.
\end{proof}

\begin{lemma}\label{local base compact} Let $X$ be a $T_0$ space and $A\in \ir_c (X)$. For any open neighborhood base $\{\Diamond U_i : i\in I\}$ of $A$ in $X^s$, $A\in m(\{U_i : i\in I\})$.
\end{lemma}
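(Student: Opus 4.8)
Recall $X^s = P_H(\ir_c(X))$ with canonical embedding $\eta_X(x) = \overline{\{x\}}$, and the subbasic open sets of $X^s$ are the $\Diamond U = \{B \in \ir_c(X) : B \cap U \neq \emptyset\}$ for $U \in \mathcal O(X)$. Given $A \in \ir_c(X)$ and an open neighborhood base $\{\Diamond U_i : i \in I\}$ of $A$ in $X^s$, the plan is to show $A \in m(\{U_i : i \in I\})$, i.e.\ $A$ is a minimal closed set meeting every $U_i$.

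The proof splits into two parts. First I would verify $A \in M(\{U_i : i \in I\})$, that is, $A \cap U_i \neq \emptyset$ for every $i \in I$; this is immediate since each $\Diamond U_i$ is a \emph{neighborhood} of $A$, so $A \in \Diamond U_i$, which by definition means $A \cap U_i \neq \emptyset$. Second, and this is the substantive part, I would show minimality: if $C$ is a closed subset of $A$ with $C \in M(\{U_i : i \in I\})$, then $C = A$. Suppose not; then $A \setminus C$ is a nonempty relatively open subset of $A$, so there is $U \in \mathcal O(X)$ with $A \cap U \neq \emptyset$ and $A \cap U \subseteq A \setminus C$, i.e.\ $C \cap U = \emptyset$. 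Then $A \in \Diamond U$, so $\Diamond U$ is an open neighborhood of $A$ in $X^s$; since $\{\Diamond U_i : i \in I\}$ is a neighborhood base, there is $i_0 \in I$ with $A \in \Diamond U_{i_0} \subseteq \Diamond U$.

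The key step is to extract from $\Diamond U_{i_0} \subseteq \Diamond U$ the inclusion $U_{i_0} \cap A \subseteq U \cap A$ (or something strong enough to derive a contradiction). Here I would use that for any $x \in X$, $\overline{\{x\}} = \eta_X(x) \in \Diamond V$ iff $x \in V$, for $V \in \mathcal O(X)$. If $x \in U_{i_0} \cap A$, I want to produce an irreducible closed set witnessing membership in $\Diamond U_{i_0}$ that forces membership in $\Diamond U$. Taking the point $x$ itself: $\overline{\{x\}} \in \Diamond U_{i_0}$, hence $\overline{\{x\}} \in \Diamond U$, hence $x \in U$. So $U_{i_0} \cap A \subseteq U \cap A \subseteq A \setminus C$, giving $C \cap U_{i_0} = \emptyset$. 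But $C \in M(\{U_i : i \in I\})$ requires $C \cap U_{i_0} \neq \emptyset$ — contradiction. Therefore $C = A$, and $A$ is minimal in $M(\{U_i : i \in I\})$, i.e.\ $A \in m(\{U_i : i \in I\})$.

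**Main obstacle.** The only delicate point is the implication $\Diamond U_{i_0} \subseteq \Diamond U \Rightarrow U_{i_0} \cap A \subseteq U$, and as sketched this works cleanly by testing against principal closures $\overline{\{x\}}$ living inside $X^s$ (using that $\eta_X$ is an embedding and $\mathcal S_c(X) \subseteq \ir_c(X)$). One should double-check that the neighborhood base consists of \emph{subbasic} sets $\Diamond U_i$ rather than finite intersections; if the base is only assumed to be arbitrary open neighborhoods, one first refines: every open neighborhood of $A$ contains a basic one $\bigcap_{k=1}^n \Diamond V_k = \Diamond(\text{?})$ — but $\Diamond$ does not commute with finite intersection, so instead one notes $A \in \bigcap_{k=1}^n \Diamond V_k$ means $A$ meets each $V_k$, and the argument above goes through verbatim with $U_{i_0}$ replaced by any one $V_k$ that, combined with the others, still forces $C \cap (\text{something}) \neq \emptyset$; in fact picking a single $V_k$ with $C \cap V_k = \emptyset$ already yields the contradiction since $C \in M$ would force $C \cap V_k \neq \emptyset$. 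So the statement as phrased with a genuine open neighborhood base reduces to the subbasic case with no real loss. I expect the written proof to be short, the content being essentially the correspondence between $\Diamond$-neighborhoods of $A$ in $X^s$ and open sets meeting $A$ in $X$.
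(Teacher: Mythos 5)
Your proof is correct and follows essentially the same route as the paper's: membership in $M(\{U_i : i\in I\})$ is immediate from $A\in\Diamond U_i$, and minimality is obtained by taking a proper closed subset, finding a basic neighborhood $\Diamond U_j$ contained in $\Diamond(X\setminus B)$, and concluding $U_j\cap B=\emptyset$. The only difference is cosmetic: the paper asserts $\Diamond U_j\subseteq\Diamond(X\setminus B)$ is ``equivalently'' $U_j\subseteq X\setminus B$, while you justify this implication explicitly by testing against point closures $\overline{\{x\}}$, which is exactly the right (and standard) argument.
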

\begin{proof} Clearly, $A\in M(\{U_i : i\in I\})$. Suppose $B\in \mathcal C  (X)$ and $B\subseteq A$. If $B\neq A$, then $A\cap (X\setminus B)\neq \emptyset$, and hence $A\in \Diamond (X\setminus B)$. Since $\{\Diamond U_i : i\in I\}$ is an open neighborhood base at $A$ in $X^s$, there is $j\in I$ such that $\Diamond U_j\subseteq \Diamond (X\setminus B)$ or, equivalently, $U_j\subseteq X\setminus B$. Therefore, $U_j\cap B=\emptyset$. So $B\notin M(\{U_i : i\in I\})$. Thus $A\in m(\{U_i : i\in I\})$.
\end{proof}

\begin{proposition}\label{second-countable space} For a $T_0$ space $X$, the following two conditions are equivalent:
\begin{enumerate}[\rm (1)]
\item $X$ is second-countable.
\item $X^s$ is second-countable.
\end{enumerate}
\end{proposition}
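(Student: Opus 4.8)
The plan is to exploit the fact that the sobrification map $\eta_X : X \to X^s$, $x \mapsto \overline{\{x\}}$, is a topological embedding whose image is dense, together with the explicit description of the topology of $X^s = P_H(\ir_c(X))$ as the one generated by the sets $\Diamond U = \{A \in \ir_c(X) : A \cap U \neq \emptyset\}$ for $U \in \mathcal O(X)$. The key observation is that the assignment $U \mapsto \Diamond U$ is a bijection between $\mathcal O(X)$ and $\mathcal O(X^s)$ which is an isomorphism of frames: it is straightforward to check $\Diamond(\bigcup_{i} U_i) = \bigcup_i \Diamond U_i$ and $\Diamond(U \cap V) = \Diamond U \cap \Diamond V$ (the latter because members of $\ir_c(X)$ are irreducible, so meeting $U$ and meeting $V$ forces meeting $U\cap V$), and it is injective since $U \subseteq V$ iff $\Diamond U \subseteq \Diamond V$ (using that $\eta_X$ is an embedding and $\eta_X^{-1}(\Diamond U) = U$). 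Every open set of $X^s$ is a union of finite intersections of subbasic sets $\Diamond U$, but by the frame identities just noted, finite intersections of such sets are again of the form $\Diamond U$, so in fact $\mathcal O(X^s) = \{\Diamond U : U \in \mathcal O(X)\}$.

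For the direction (1) $\Rightarrow$ (2): if $\{U_n : n \in \mn\}$ is a countable base for $X$, I claim $\{\Diamond U_n : n \in \mn\}$ is a countable base for $X^s$. Given any open set $\mathcal W$ of $X^s$ and a point $A \in \mathcal W$, write $\mathcal W = \Diamond V$ for some $V \in \mathcal O(X)$; then $A \cap V \neq \emptyset$, pick $x \in A \cap V$, pick a basic $U_n$ with $x \in U_n \subseteq V$, and then $A \in \Diamond U_n \subseteq \Diamond V = \mathcal W$. Hence $\{\Diamond U_n\}$ is a base and $X^s$ is second-countable.

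For the direction (2) $\Rightarrow$ (1): if $X^s$ is second-countable, it has a countable base, which (replacing basic opens by finite unions and using the frame isomorphism) we may take to be of the form $\{\Diamond U_n : n \in \mn\}$ for a countable family $\{U_n\} \subseteq \mathcal O(X)$. Since $\eta_X$ is a topological embedding, $\{\eta_X^{-1}(\Diamond U_n) : n \in \mn\} = \{U_n : n \in \mn\}$ is a base for the subspace $\eta_X(X)$, hence for $X$; thus $X$ is second-countable. I do not expect any serious obstacle here; the only point requiring a little care is justifying that a countable base of $X^s$ can be taken in the canonical form $\{\Diamond U_n\}$ — this is where the frame-isomorphism remark does the work, since an arbitrary countable base consists of unions of finite intersections of subbasic $\Diamond U$'s, each such finite intersection is some $\Diamond U$, and expanding a countable base into these countably many pieces keeps the family countable.
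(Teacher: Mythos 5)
Your proof is correct and follows essentially the same route as the paper: the paper's entire proof is the single assertion that $\{U_i : i\in I\}$ is a base of $X$ if and only if $\{\Diamond U_i : i\in I\}$ is a base of $X^s$, and your argument simply verifies this in detail via the frame isomorphism $U\mapsto\Diamond U$ between $\mathcal O(X)$ and $\mathcal O(X^s)$. The only cosmetic remark is that in the direction (2) $\Rightarrow$ (1) there is no need to ``expand'' a base into countably many pieces: your frame identities already show that \emph{every} open set of $X^s$ is of the form $\Diamond U$, so any base of $X^s$ is automatically of the form $\{\Diamond U_n : n\in\mn\}$.
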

\begin{proof} For any  $\{U_i: i\in I\}\subseteq \mathcal O(X)$, it is easy to verify that $\{U_i: i\in I\}$ is a base of $X$ iff $\{\Diamond U_i : i\in I\}$ is a base of $X^s$.
\end{proof}

Since the first-countability is a hereditary property, by Remark \ref{eta continuous}, we have the following result.

\begin{proposition}\label{soberification first-countable implies X is also} For a $T_0$ space $X$, if $X^s$ is first-countable, then $X$ is first-countable.
\end{proposition}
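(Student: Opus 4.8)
The plan is to reduce the statement to the fact that first-countability is inherited by subspaces, using that $X$ embeds topologically into its sobrification. First I would recall from Remark \ref{eta continuous} that $X^s = P_H(\ir_c(X))$ and that the canonical mapping $\eta_X : X \longrightarrow X^s$, $x \mapsto \overline{\{x\}}$, is an order and topological embedding. In particular $\eta_X$ restricts to a homeomorphism of $X$ onto the subspace $\eta_X(X) = \mathcal S_c(X) \subseteq X^s$ (equipped with the subspace topology induced from $X^s$).

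Next I would invoke the hereditary nature of first-countability: if a topological space is first-countable, then so is every subspace of it, since a countable neighbourhood base at a point of the subspace is obtained by intersecting a countable neighbourhood base of that point in the ambient space with the subspace. Applying this with the ambient space $X^s$ (first-countable by hypothesis) and the subspace $\eta_X(X)$ yields that $\eta_X(X)$ is first-countable.

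Finally, since $X$ is homeomorphic to $\eta_X(X)$ via $\eta_X$, and first-countability is a topological invariant, we conclude that $X$ is first-countable, as desired.

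I do not expect any real obstacle here: the only point requiring a shred of care is confirming that ``topological embedding'' in Remark \ref{eta continuous} is being used in the standard sense (homeomorphism onto the image carrying the subspace topology), which is exactly what is needed to transfer a countable local base from $X^s$ down to $X$. No appeal to sobriety, well-filteredness, or the Rudin machinery is required for this particular statement.
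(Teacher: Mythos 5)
Your proposal is correct and is exactly the paper's argument: the paper derives this proposition from the hereditariness of first-countability together with Remark \ref{eta continuous}, which states that $\eta_X$ is a topological embedding of $X$ into $X^s$. Nothing further is needed.
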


The converse of Proposition \ref{soberification first-countable implies X is also} does not hold in general, as shown in Example \ref{first-countable omega WF is not sober} below.

\begin{proposition}\label{countable first-countable} If a $T_0$ space $X$ is first-countable and $| X |\leq \omega$, then $X^s$ is first-countable.
\end{proposition}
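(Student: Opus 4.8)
The plan is to produce, for each $A\in\ir_c(X)$, a countable open neighbourhood base of $A$ in $X^s$ of the form $\{\Diamond U : U\in\mathcal B_A\}$, where $\mathcal B_A$ is a suitable countable family of open subsets of $X$. The point $A$ of $X^s$ has open neighbourhoods generated by the subbasic sets $\Diamond U$, $U\in\mathcal O(X)$, that contain $A$, i.e. those with $A\cap U\neq\emptyset$; a basic neighbourhood is a finite intersection $\Diamond U_1\cap\cdots\cap\Diamond U_k$ with $A\cap U_j\neq\emptyset$ for each $j$. Since $A$ is irreducible, $A\cap U_1\cap\cdots\cap U_k\neq\emptyset$... no, that is false in general, but irreducibility does give that for each such finite list one can pick a single point of $A$ meeting the relevant opens; the right move is instead to exploit $|X|\leq\omega$ directly.

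First I would observe that because $X$ is first-countable, for every $x\in X$ we may fix a countable decreasing local base $\{U_n^x : n\in\mn\}$ at $x$ in $X$. Now set $\mathcal B = \{U_{n_1}^{x_1}\cap\cdots\cap U_{n_k}^{x_k} : k\in\mn,\ x_1,\dots,x_k\in X,\ n_1,\dots,n_k\in\mn\}$, the collection of all finite intersections of these basic opens. Since $X$ is countable and each $\{U_n^x\}$ is countable, $\mathcal B$ is a countable family of open sets, and it is clearly a base for $\mathcal O(X)$ (every open $V$ equals the union of those $U_n^x$ with $x\in V$ and $U_n^x\subseteq V$). Then $\{\Diamond U : U\in\mathcal B\}$ is a countable family of open subsets of $X^s$, and by the equivalence used in the proof of Proposition \ref{second-countable space} (a family $\{U_i\}$ is a base of $X$ iff $\{\Diamond U_i\}$ is a base of $X^s$), it is a base of $X^s$. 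A countable base is in particular a countable local base at every point, so $X^s$ is first-countable (indeed second-countable).

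The hard part — or rather the thing to be careful about — is that the hypothesis $|X|\leq\omega$ is genuinely needed and must be used: without it, the family of finite intersections of local bases over all points of $X$ need not be countable, and indeed Proposition \ref{soberification first-countable implies X is also}'s converse fails (Example \ref{first-countable omega WF is not sober}). So the argument must not accidentally work for arbitrary first-countable $X$; the only place countability of $X$ enters is in making $\mathcal B$ countable. An alternative, slightly more economical route avoiding the claim that $\mathcal B$ is a base of all of $X$: fix $A\in\ir_c(X)$, and for each finite subset $\{x_1,\dots,x_k\}$ of the countable set $X$ and each $(n_1,\dots,n_k)\in\mn^k$ with $U_{n_j}^{x_j}\cap A\neq\emptyset$ for all $j$, include $\Diamond U_{n_1}^{x_1}\cap\cdots\cap\Diamond U_{n_k}^{x_k}$; this is a countable family of neighbourhoods of $A$, and given any basic neighbourhood $\Diamond V_1\cap\cdots\cap\Diamond V_m$ of $A$ one picks $y_j\in A\cap V_j$ and then $n_j$ with $y_j\in U_{n_j}^{y_j}\subseteq V_j$, so $\Diamond U_{n_1}^{y_1}\cap\cdots\cap\Diamond U_{n_m}^{y_m}$ sits inside it and lies in our family. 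Either way the verification is routine once $\mathcal B$ (or the per-$A$ family) is seen to be countable, which is exactly where $|X|\leq\omega$ is spent.
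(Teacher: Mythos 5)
Your main argument is correct and, modulo packaging, rests on the same ingredients as the paper's proof: enumerate the countable set $X$, fix a countable local base at each point, and observe that the resulting countable family of open sets controls the neighbourhoods of any $A\in\ir_c(X)$ in $X^s$. The difference is that you pass through second-countability: the union of the local bases is already a countable base of $X$ (the finite intersections you adjoin are not needed for this), so $X$ is second-countable and Proposition \ref{second-countable space} gives that $X^s$ is second-countable, hence first-countable --- in fact a slightly stronger conclusion than the one stated. The paper instead builds, for each fixed $A$, the countable local base $\{\Diamond U_m(x_n): A\cap U_m(x_n)\neq\emptyset\}$ directly. One mis-step in your write-up happens not to matter but should be corrected: you retract the claim that an irreducible $A$ meeting each of $U_1,\dots,U_k$ must meet $U_1\cap\cdots\cap U_k$, calling it ``false in general''; for \emph{irreducible} $A$ and \emph{open} $U_j$ it is true (it is precisely the open-set reformulation of irreducibility), and it is exactly what makes the single sets $\Diamond U$ (rather than finite intersections of them) a neighbourhood base at $A$ --- both in the verification behind Proposition \ref{second-countable space} and in your ``alternative route'', where $\Diamond U_{n_1}^{y_1}\cap\cdots\cap\Diamond U_{n_m}^{y_m}$ can be replaced by a single $\Diamond U_{n}^{y}$ with $y\in A\cap V_1\cap\cdots\cap V_m$.
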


\begin{proof} Let $X=\{x_n : n\in \mn\}$. For each $n\in \mn$, since $X$ is first-countable, there is a countable base $\{U_m(x_n) : m\in \mn\}$ at $x_n$. For any $A\in \ir_c (X)$, it straightforward to verify that $\{\Diamond U_m(x_n) : (m, n)\in  \mn\times \mn \mathrm{~and~} A\cap U_m(x_n)\neq \emptyset \}$ is a countable base at $A$ in $X^s$. Thus $X^s$ is first-countable.
\end{proof}

The following example shows that the Scott topology on a countable complete lattice may not be first-countable.
	
\begin{example}\label{countable poset but Scott topology not first-countable} \emph{(\cite{xu-shen-xi-zhao2})}
	Let $L=\{\bot\}\cup(\mn \times \mn)\cup\{\top\}$ and define a partial order $\leq$ on $L$ as follows:
	\begin{itemize}
		\item [(i)] $\forall (n,m)\in \mn\times\mn$, $\bot\leq (n,m) \leq\top$;
		\item [(ii)] $\forall (n_1,m_1), (n_2,m_2)\in\mn\times\mn$, $(n_1,m_1)\leq(n_2,m_2)$ iff $n_1=n_2$ and $m_1\leq m_2$.
	\end{itemize}

 Then $(L, \sigma (L))$ does not have any countable base at $\top$. Assume, on the contrary,  there exists a countable base $\{U_n : n\in\mn\}$ at $\top$. Then for each $n\in\mn$, as
$$\bigvee (\{n\}\times\mn)=\top\in U_n,$$
there exists $m_n\in\mn$ such that $(n,m_n)\in U_n$.
Let $U=\bigcup_{n\in\mn }\ua (n,m_n+1)$. Then $U\in \sigma (L)$. But for each $n\in N$, $(n,m_{n})\in U_n\setminus U$, which contradicts that  $\{U_n:n\in\mn\}$ is a base at $\top$. Therefore, $(L, \sigma (L))$ is not first-countable. One can easily check that $(L, \sigma (L))$ is sober.
\end{example}

\begin{theorem}\label{sobrifcaltion first-countable is Rudin} For a $T_0$ space $X$, if $X^s$ is first-countable, then $X$ is an $\omega$-Rudin space.
\end{theorem}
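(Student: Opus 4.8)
The plan is to establish the stronger assertion $\ir_c(X)=\mathsf{RD}_\omega(X)$ by showing that each $A\in\ir_c(X)$ has the $\omega$-Rudin property; the reverse inclusion $\mathsf{RD}_\omega(X)\subseteq\ir_c(X)$ is already contained in Proposition \ref{omeg rudin is omega WF}. So fix $A\in\ir_c(X)$ and regard it as a point of the sobrification $X^s=P_H(\ir_c(X))$. Since every member of $\ir_c(X)$ is irreducible, $\Diamond U\cap\Diamond V=\Diamond(U\cap V)$ holds in $X^s$, so the basic open sets $\Diamond U$ ($U\in\mathcal O(X)$) form a base of $X^s$ closed under finite intersections. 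Using the first-countability of $X^s$, I would take a countable neighbourhood base at $A$, refine it to basic opens, and then pass to successive finite intersections to obtain open sets $U_1\supseteq U_2\supseteq\cdots$ of $X$ such that $\{\Diamond U_n:n\in\mn\}$ is still a neighbourhood base at $A$ in $X^s$.

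Next, Lemma \ref{local base compact} gives $A\in m(\{U_n:n\in\mn\})$; in particular $A\cap U_n\neq\emptyset$ for every $n$, so I pick $x_n\in U_n\cap A$. Because the $U_n$ are descending, Lemma \ref{countable minimal compact} applies and yields that every subset of $\{x_n:n\in\mn\}$ is compact. Consequently $K_n:=\ua\{x_m:m\geq n\}$ is a nonempty compact saturated subset of $X$ for each $n$, and $\mathcal K:=\{K_n:n\in\mn\}$ is a countable descending --- hence filtered --- family in $\mk(X)$.

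It then remains to verify that $A=\overline A$ is a minimal member of $M(\mathcal K)$. Membership $A\in M(\mathcal K)$ is immediate since $x_n\in A\cap K_n$. For minimality, suppose $B\in\mathcal C(X)$ satisfies $B\subseteq A$ and $B\in M(\mathcal K)$; for each $n$ choose $b\in B\cap K_n$, so $b\geq x_m$ for some $m\geq n$, and since $U_m$ is open --- hence an upper set --- and contains $x_m$, we get $b\in U_m\subseteq U_n$. Thus $B\cap U_n\neq\emptyset$ for all $n$, i.e.\ $B\in M(\{U_n:n\in\mn\})$, and the minimality of $A$ obtained in the first step forces $B=A$. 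Hence $A\in m(\mathcal K)$, so $A$ has the $\omega$-Rudin property, as required.

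The whole argument is essentially a packaging of Lemmas \ref{local base compact} and \ref{countable minimal compact}, so I do not anticipate a real obstacle. The two points deserving a little care are the reduction in the first step to a descending neighbourhood base consisting of sets of the form $\Diamond U$ --- which is exactly where the irreducibility of $A$ (and of the points of $X^s$) enters --- and the transfer of minimality in the last step from the compact family $\mathcal K$ back to the open family $\{U_n:n\in\mn\}$.
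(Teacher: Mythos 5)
Your proposal is correct and follows essentially the same route as the paper: a descending countable base $\{\Diamond U_n\}$ at $A$ in $X^s$, Lemma \ref{local base compact} to get $A\in m(\{U_n : n\in\mn\})$, Lemma \ref{countable minimal compact} to make $K_n=\ua\{x_m : m\geq n\}$ compact, and then minimality of $A$ for the family $\{K_n\}$. The only cosmetic difference is that you transfer minimality back through $M(\{U_n : n\in\mn\})$, while the paper re-runs the neighbourhood-base argument directly against a proper closed subset $B\subseteq A$; the two are equivalent.
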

\begin{proof} Let $A\in \ir_c (X)$. By the first-countability of $X^s$, there is an open neighborhood base $\{\Diamond U_n : n\in \mn\}$ of $A$ such that

$$\Diamond U_1\supseteq \Diamond U_2\supseteq\ldots\supseteq \Diamond U_n\supseteq\ldots, $$

\noindent or equivalently, $U_1\supseteq U_2\supseteq\ldots\supseteq U_n\supseteq\ldots$. By Lemma \ref{local base compact}, $A\in m(\{U_n : n\in \mn\}$. For each $n\in \mn$, choose an $x_n\in U_n\cap A$, and let $K_n=\ua \{x_m : m\geq n\}$. Then $K_1\supseteq K_2\supseteq\ldots\supseteq  K_n\supseteq\ldots$, and $\{K_n : n\in \mn \}\subseteq \mathsf{K}(X)$ by Lemma \ref{countable minimal compact}. Clearly, $A\in M(\{K_n : n\in\mn\})$. For any $B\in \mathcal C (X)$, if $B$ is a proper subset of $A$, that is, $A\cap (X\setminus B)=A\setminus B\neq \emptyset$, then $A\in \Diamond (X\setminus B)\in \mathcal O(X^s)~(=\mathcal O(P_H(\ir_c(X))))$. Therefore, $\Diamond U_m\subseteq \Diamond (X\setminus B)$ for some $m\in \mn$, and hence $U_m\subseteq X\setminus B$ or, equivalently, $U_m\cap B=\emptyset$. Thus $B\notin M(\{K_n : n\in\mn\})$, proving $A\in m(\{K_n : n\in\mn\})$. So $X$ is an $\omega$-Rudin space.
\end{proof}

\begin{corollary}\label{second-countable is Rudin} Every second-countable $T_0$ space is an $\omega$-Rudin space.
\end{corollary}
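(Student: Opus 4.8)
The statement to prove is Corollary~\ref{second-countable is Rudin}: every second-countable $T_0$ space is an $\omega$-Rudin space.

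\medskip

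The plan is to derive this as an immediate consequence of Theorem~\ref{sobrifcaltion first-countable is Rudin} together with Proposition~\ref{second-countable space}. First I would observe that second-countability is a strictly stronger property than first-countability for any topological space: if $\{U_i : i\in I\}$ is a countable base for the whole space, then for each point $x$ the subfamily $\{U_i : i\in I, \ x\in U_i\}$ is a countable base at $x$, so a second-countable space is first-countable. Hence if $X$ is second-countable, then by Proposition~\ref{second-countable space} its sobrification $X^s$ is second-countable as well, and therefore $X^s$ is first-countable. Applying Theorem~\ref{sobrifcaltion first-countable is Rudin} to $X$ now yields that $X$ is an $\omega$-Rudin space, which is exactly the assertion.

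\medskip

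I do not anticipate any real obstacle here — the corollary is purely a matter of chaining two already-established results, with the only additional ingredient being the trivial implication ``second-countable $\Rightarrow$ first-countable.'' The one small point to be careful about is that Theorem~\ref{sobrifcaltion first-countable is Rudin} is stated with the hypothesis that $X^s$ (not $X$ itself) is first-countable, so the argument genuinely routes through Proposition~\ref{second-countable space} rather than merely through Proposition~\ref{soberification first-countable implies X is also}; invoking the latter alone would not suffice, since that proposition goes the wrong direction. So the key step, such as it is, is to pass from second-countability of $X$ up to second-countability (hence first-countability) of $X^s$ via Proposition~\ref{second-countable space}, and then come back down with Theorem~\ref{sobrifcaltion first-countable is Rudin}.
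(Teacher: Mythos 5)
Your proof is correct and follows exactly the route the paper intends: pass from second-countability of $X$ to second-countability (hence first-countability) of $X^s$ via Proposition~\ref{second-countable space}, then apply Theorem~\ref{sobrifcaltion first-countable is Rudin}. Your remark that Proposition~\ref{soberification first-countable implies X is also} alone would not suffice is also a correct and worthwhile observation.
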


\begin{corollary}\label{countable first-countable is Rudin} Every countable first-countable $T_0$ space is an $\omega$-Rudin space.
\end{corollary}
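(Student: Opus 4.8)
The plan is to obtain this statement at once from the two results established just above. A countable $T_0$ space $X$ has $|X|\le\omega$ by definition, so if in addition $X$ is first-countable, then Proposition \ref{countable first-countable} applies and tells us that the sobrification $X^s$ is first-countable. Now Theorem \ref{sobrifcaltion first-countable is Rudin} asserts that any $T_0$ space whose sobrification is first-countable is an $\omega$-Rudin space; applying it to $X$ yields the conclusion.

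Since both ingredients are already in hand, there is no real obstacle here: the corollary is just the composition Proposition \ref{countable first-countable} $\Rightarrow$ Theorem \ref{sobrifcaltion first-countable is Rudin}. The only point worth noting is that ``countable'' in the statement is to be read as ``having a countable underlying set'', so that the hypothesis $|X|\le\omega$ of Proposition \ref{countable first-countable} is indeed met.

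For completeness one could also give a direct argument by unfolding the two proofs: write $X=\{x_n : n\in\mn\}$ and fix a countable base $\{U_m(x_n):m\in\mn\}$ at each $x_n$; for $A\in\ir_c(X)$ the family $\{\Diamond U_m(x_n) : (m,n)\in\mn\times\mn,\ A\cap U_m(x_n)\neq\emptyset\}$ is a countable base at $A$ in $X^s$, from which one extracts a decreasing neighbourhood base $\{\Diamond U_n : n\in\mn\}$ at $A$, chooses $x_n\in U_n\cap A$, sets $K_n=\ua\{x_m : m\ge n\}$, invokes Lemma \ref{countable minimal compact} to see each $K_n\in\mk(X)$, and checks $A\in m(\{K_n : n\in\mn\})$ using Lemma \ref{local base compact}. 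This merely reproduces the proofs of Proposition \ref{countable first-countable} and Theorem \ref{sobrifcaltion first-countable is Rudin}, so the two-line deduction is the natural way to present it.
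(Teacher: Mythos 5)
Your proof is correct and is exactly the intended derivation: the paper places Proposition \ref{countable first-countable} immediately before Theorem \ref{sobrifcaltion first-countable is Rudin} precisely so that this corollary follows by composing the two, which is what you do. The remark that ``countable'' means $|X|\leq\omega$ and the optional unfolding of the two proofs add nothing that needs checking.
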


\begin{theorem}\label{sobrification first-countable and omega-WF is sober} Every $\omega$-well-filtered space with a first-countable sobrification is sober
\end{theorem}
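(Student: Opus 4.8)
The plan is to obtain this as an immediate consequence of Theorem~\ref{sobrifcaltion first-countable is Rudin} combined with the sobriety characterization of Theorem~\ref{sober equiv using omega RD and omega WD}. So suppose $X$ is a $T_0$ space that is $\omega$-well-filtered and whose sobrification $X^s$ is first-countable. The only nontrivial input needed has already been established: by Theorem~\ref{sobrifcaltion first-countable is Rudin}, first-countability of $X^s$ alone forces $X$ to be an $\omega$-Rudin space, i.e. $\ir_c(X)=\mathsf{RD}_\omega(X)$.

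Next I would feed this into Theorem~\ref{sober equiv using omega RD and omega WD}. Among the equivalent formulations of sobriety listed there is condition~(4), namely that $X$ be simultaneously an $\omega$-Rudin space and an $\omega$-well-filtered space. By hypothesis $X$ is $\omega$-well-filtered, and by the previous paragraph $X$ is $\omega$-Rudin, so condition~(4) holds and hence $X$ is sober. (Equivalently, one may route through condition~(5): Proposition~\ref{omeg rudin is omega WF} gives that every $\omega$-Rudin space is $\omega$-well-filtered determined, so ``$\omega$-well-filtered determined $+$ $\omega$-well-filtered'' applies instead, to the same effect.)

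I do not anticipate any real obstacle in assembling this argument: the substantive work — extracting from a countable decreasing open neighbourhood base $\{\Diamond U_n : n\in\mn\}$ of an irreducible closed set $A$ in $X^s$ a countable filtered family $\{K_n : n\in\mn\}\subseteq\mathsf{K}(X)$ with $A\in m(\{K_n : n\in\mn\})$, via Lemma~\ref{countable minimal compact} and Lemma~\ref{local base compact} — was carried out inside the proof of Theorem~\ref{sobrifcaltion first-countable is Rudin}, and the passage from $\omega$-Rudin plus $\omega$-well-filtered to sober is exactly what Theorem~\ref{sober equiv using omega RD and omega WD} packages. Thus the proof is essentially a two-line citation chain. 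It is worth noting (for the reader, not as part of the proof) that the hypothesis is genuinely stronger than first-countability of $X$ itself, by Proposition~\ref{soberification first-countable implies X is also} and the counterexample referenced as Example~\ref{first-countable omega WF is not sober}, so the theorem does not subsume the sharper results about first-countable $\omega$-well-filtered spaces proved later via countably-directed sets.
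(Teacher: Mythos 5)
Your proposal is correct and follows essentially the same route as the paper: both rest on Theorem~\ref{sobrifcaltion first-countable is Rudin} to get a countable filtered family $\{K_n : n\in\mn\}\subseteq\mathsf{K}(X)$ with $A\in m(\{K_n : n\in\mn\})$ for each $A\in\ir_c(X)$, i.e.\ the $\omega$-Rudin property. The only difference is that you close the argument by citing condition (4) of Theorem~\ref{sober equiv using omega RD and omega WD}, whereas the paper inlines that implication (intersect the $K_n$, use $\omega$-well-filteredness to meet $A$, and invoke minimality); the two are interchangeable.
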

\begin{proof} For $A\in\ir_c(X)$, by Theorem \ref{sobrifcaltion first-countable is Rudin} and its proof (or Proposition \ref{omega WF charact}), there is a decreasing sequence $\{K_n : n\in\mn \}\subseteq \mathsf{K}(X)$ such that $A\in m(\{K_n n\in \mn\}$. Since $X$ is $\omega$-well-filtered, $\bigcap_{n\in\mn}K_n\nsubseteq X\setminus A$, that is, $\bigcap_{n\in\mn}K_n\cap A\neq\emptyset$. Choose $x\in \bigcap_{n\in\mn}K_n\cap A$. Then $\overline{\{x\}}\in M(\{K_n n\in \mn\}$ and $\overline{\{x\}}\subseteq A$. By the minimality of $A$, we have $A=\overline{\{x\}}$. Thus $X$ is sober.
\end{proof}

\begin{corollary}\label{secound-countable omega WF is sober} Every second-countable $\omega$-well-filtered space is sober.
\end{corollary}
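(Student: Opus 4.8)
The plan is to derive Corollary~\ref{secound-countable omega WF is sober} directly from Theorem~\ref{sobrification first-countable and omega-WF is sober}, so the entire task reduces to exhibiting, for a second-countable $T_0$ space $X$, a first-countable sobrification $X^s$. But this is exactly the content of Proposition~\ref{second-countable space}: if $X$ is second-countable, then $X^s$ is second-countable, and a second-countable space is in particular first-countable. Hence $X^s$ is first-countable, and Theorem~\ref{sobrification first-countable and omega-WF is sober} applies to give that every $\omega$-well-filtered such $X$ is sober.

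So the proof is essentially a two-line chain of implications: second-countable $X$ $\Rightarrow$ ($X^s$ second-countable by Proposition~\ref{second-countable space}) $\Rightarrow$ $X^s$ first-countable $\Rightarrow$ ($X$ $\omega$-well-filtered and sobrification first-countable, hence sober by Theorem~\ref{sobrification first-countable and omega-WF is sober}). I would state precisely this. There is no genuine obstacle here; the only thing to be careful about is invoking Proposition~\ref{second-countable space} rather than trying to re-prove first-countability of $X^s$ from scratch, and noting explicitly the trivial but necessary fact that second-countability implies first-countability.

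Alternatively, one could bypass the named sobrification result and argue more directly: by Corollary~\ref{second-countable is Rudin}, every second-countable $T_0$ space is an $\omega$-Rudin space; combined with the hypothesis that $X$ is $\omega$-well-filtered, Theorem~\ref{sober equiv using omega RD and omega WD}(4) (``$X$ is an $\omega$-Rudin and $\omega$-well-filtered space'' $\Rightarrow$ $X$ sober) immediately gives sobriety of $X$. I would probably present the first route since it is the one suggested by the placement of the corollary right after Theorem~\ref{sobrification first-countable and omega-WF is sober}, but mention that the second route via Corollary~\ref{second-countable is Rudin} and Theorem~\ref{sober equiv using omega RD and omega WD} works equally well. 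In either case the proof is a one-sentence deduction and the ``hard part'' — the construction of a countable basis of closed irreducible sets at each point of $X^s$, or the Rudin-set argument — has already been done in the preceding results.

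\begin{proof} Let $X$ be a second-countable $T_0$ space which is $\omega$-well-filtered. By Proposition~\ref{second-countable space}, $X^s$ is second-countable, hence first-countable. Therefore, by Theorem~\ref{sobrification first-countable and omega-WF is sober}, $X$ is sober. (Alternatively: by Corollary~\ref{second-countable is Rudin}, $X$ is an $\omega$-Rudin space, so $X$ is an $\omega$-Rudin and $\omega$-well-filtered space, whence $X$ is sober by Theorem~\ref{sober equiv using omega RD and omega WD}.)
\end{proof}
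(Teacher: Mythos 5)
Your proof is correct and follows exactly the route the paper intends: Proposition~\ref{second-countable space} gives second-countability (hence first-countability) of $X^s$, and Theorem~\ref{sobrification first-countable and omega-WF is sober} then yields sobriety. The alternative route via Corollary~\ref{second-countable is Rudin} and Theorem~\ref{sober equiv using omega RD and omega WD} is also valid and amounts to the same underlying argument.
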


\begin{corollary}\label{countable first-countable omega WF is sober} Every countable first-countable $\omega$-well-filtered space is sober.
\end{corollary}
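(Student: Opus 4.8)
The plan is to obtain this as an immediate consequence of the two results that directly precede it. First I would note that a first-countable $T_0$ space $X$ with $|X|\le\omega$ has a first-countable sobrification: this is exactly Proposition~\ref{countable first-countable}, whose proof writes $X=\{x_n:n\in\mn\}$, fixes a countable base $\{U_m(x_n):m\in\mn\}$ at each $x_n$, and checks that $\{\Diamond U_m(x_n):(m,n)\in\mn\times\mn,\ A\cap U_m(x_n)\neq\emptyset\}$ is a countable base at $A$ in $X^s$ for every $A\in\ir_c(X)$. Since the hypotheses of Proposition~\ref{countable first-countable} are precisely "$X$ first-countable" and "$|X|\le\omega$", both of which are explicitly assumed in the corollary, no extra work is needed to invoke it.

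Then, since $X$ is assumed $\omega$-well-filtered and is now known to have a first-countable sobrification, Theorem~\ref{sobrification first-countable and omega-WF is sober} applies verbatim and yields that $X$ is sober, which is the claim.

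An equivalent route, which makes the dependence on the Rudin-space machinery explicit, is to first apply Corollary~\ref{countable first-countable is Rudin} to conclude that $X$ is an $\omega$-Rudin space, and then use the equivalence (1) $\Leftrightarrow$ (4) of Theorem~\ref{sober equiv using omega RD and omega WD}: a $T_0$ space is sober iff it is both $\omega$-Rudin and $\omega$-well-filtered. Either way, there is essentially no obstacle: all the substance has already been absorbed into Theorem~\ref{sobrifcaltion first-countable is Rudin}, Theorem~\ref{sobrification first-countable and omega-WF is sober}, Proposition~\ref{countable first-countable}, and Theorem~\ref{sober equiv using omega RD and omega WD}, so the corollary is a one-line deduction; the only point worth a moment's care is verifying that the hypotheses of Proposition~\ref{countable first-countable} line up exactly with those of the corollary, which they do.
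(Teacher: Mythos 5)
Your proposal is correct and follows exactly the route the paper intends: Proposition~\ref{countable first-countable} gives that $X^s$ is first-countable, and Theorem~\ref{sobrification first-countable and omega-WF is sober} then yields sobriety; the alternative via Corollary~\ref{countable first-countable is Rudin} and Theorem~\ref{sober equiv using omega RD and omega WD} is also valid and equivalent in substance.
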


In Theorem \ref{sobrifcaltion first-countable is Rudin} and Theorem \ref{sobrification first-countable and omega-WF is sober}, the first-countability of $X^s$ can not be weakened to that of $X$ as shown in the following example (see also Example \ref{omega-d-space needed} in Section 7).

\begin{example}\label{first-countable omega WF is not sober}  Let $\omega_1$ be the first uncountable ordinal number and $P=[0, \omega_1)$. Then
\begin{enumerate}[\rm (a)]
\item $\mathcal C(\Sigma P)=\{\da t : t\in P\}\cup\{\emptyset, P\}$.
\item $\Sigma P$ is compact since $P$ has a least element $0$.
\item $\Sigma P$ is first-countable.
\item $(\Sigma P)^s$ is not first-countable. In fact, it is easy to verify that $(\Sigma P)^s$ is homeomorphic to $\Sigma [0, \omega_1]$. Since sup of a countable family of countable ordinal numbers is still a countable ordinal number, $\Sigma [0, \omega_1]$ has no countable base at the point $\omega_1$.
    \item $P$ is an $\omega$-dcpo but not a dcpo. So $\Sigma P$ is an $\omega$-$d$-space but not a $d$-space, and hence not a sober space.
    \item $\mathsf{K}(\Sigma P)=\{\ua x : x\in P\}$. For $K\in \mathsf{K}(\Sigma P)$, we have $\mathrm{inf}~K\in K$, and hence $K=\ua \mathrm{inf}~K$.
    \item $\Sigma P$ is a Rudin space. One can easily check that $\ir_c(\Sigma P)=\{\downarrow x : x\in P\}\cup \{P\}$. Clearly, $\downarrow x$ is a Rudin set for each $x\in P$. Now we show that $P$ is a Rudin set. First, $\{\ua s : s\in P\}$ is filtered. Second, $P\in M(\{\ua s : s\in P\})$. For a closed subset $B$ of $\Sigma P$, if $B\neq P$, then $B=\da t$ for some $t\in P$, and hence $\ua (t+1)\cap\da t=\emptyset$. Thus $B\notin M(\{\ua s : s\in P\})$, proving that $P$ is a Rudin set.
        \item $\Sigma P$ is not an $\omega$-Rudin space. We prove that the irreducible closed set $P$ is not an $\omega$-Rudin set. For any countable filtered family $\{\ua \alpha_n : n\in\mn \}\subseteq \mathsf{K}(\Sigma P)$, let $\beta=\mathrm{sup}\{\alpha_n : n\in\mn\}$. Then $\beta$ is still a countable ordinal number. Clearly, $\da \beta \in M(\{\ua \alpha_n : n\in \mn\}$ and $P\neq \da \beta$. Therefore, $P\notin m(\{\ua \alpha_n : n\in \mn\})$. Thus $P$ is not an $\omega$-Rudin set, and hence $\Sigma P$ is not an $\omega$-Rudin space.
    \item $\Sigma P$ is $\omega$-well-filtered. If $\{\ua x_n : n\in \mn\}\subseteq \mathsf{K}(\Sigma P)$ is countable filtered family and $U\in \sigma (P)$ with $\bigcap_{n\in \mn}\ua x_n\subseteq U$, then $\{x_n : i\in\mn\}$ is a countable subset of $P=[0, \omega_1)$. Since sup of a countable family of countable ordinal numbers is still a countable ordinal number, we have $\beta=\mathrm{sup}\{x_n : n\in \mn\}\in P$, and hence $\ua \beta=\bigcap_{n\in \mn}\ua x_n\subseteq U$. Therefore, $\beta\in U$, and consequently, $x_n\in U$ for some $n\in \mn$ or, equivalently, $\ua x_n\subseteq U$, proving that $\Sigma P$ is $\omega$-well-filtered.
\end{enumerate}
\end{example}

\section{First-countability and well-filtered determined spaces}

In this section, we show that any first-countable $T_0$ space is well-filtered determined. In \cite{xu-shen-xi-zhao2} it was shown that in a first-countable $\omega$-well-filtered $T_0$ space $X$, all irreducible closed subsets of $X$ are directed (see \cite[Theorem 4.1]{xu-shen-xi-zhao2}). In the following we will strengthen this result by proving that in a first-countable $\omega$-well-filtered space $X$, every irreducible closed subset of $X$ is countably-directed.

\begin{lemma}\label{first-countable omega-directed} Suppose that $X$ is a first-countable $T_0$ space, $Y$ is an $\omega$-well-filtered space and $f: X \rightarrow Y$ is a continuous mapping. Then for any $A\in \ir (X)$ and $\{a_n : n\in \mn\}\subseteq \overline{A}$, $\bigcap_{n\in \mn} \uparrow f(a_n) \bigcap \overline{f(A)}\neq \emptyset$.
\end{lemma}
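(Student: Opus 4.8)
The plan is to use the first countability of $X$ to manufacture a descending chain of compact saturated subsets of $Y$, and then to conclude by the $\omega$-well-filteredness of $Y$.

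\emph{Setting up the open sets.} For each $n$ fix, by first countability, a decreasing open neighbourhood base $B^n_1\supseteq B^n_2\supseteq\cdots$ of $a_n$ in $X$, so that $\bigcap_k B^n_k=\ua_X a_n$, and put $W_m=\bigcap_{n\le m}B^n_m$. Then $W_1\supseteq W_2\supseteq\cdots$ and each $W_m$ meets $A$: every $B^n_m$ meets $\overline A$ (it contains $a_n$), hence so does the finite intersection $W_m$ by irreducibility of $A$, and therefore $W_m$ meets the dense set $A$. The key feature is that for $j\ge\max\{n,m_0\}$ we have $W_j\subseteq B^n_j\subseteq B^n_{m_0}$; so, once points $c_m\in W_m$ are chosen, any $z\in Y$ with $z\ge_Y f(c_m)$ for all $m$ satisfies $z\ge_Y f(a_n)$ for all $n$: given an open $V\ni f(a_n)$ in $Y$, $f^{-1}(V)$ is an open neighbourhood of $a_n$, so it contains some $B^n_{m_0}$, whence $f(c_j)\in V$ for all $j\ge\max\{n,m_0\}$; since $V$ is an upper set and $z\ge f(c_j)$, $z\in V$. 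Thus
$$\bigcap_{m}\ua_Y f(c_m)\ \subseteq\ \bigcap_{n}\ua_Y f(a_n).$$

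\emph{The compact sets.} Put $K_m=\ua_Y\{f(c_j):j\ge m\}$. Each $K_m$ is saturated and the chain $(K_m)_m$ is descending; the point is to choose the $c_m$ so that every $K_m$ is \emph{compact}. Since the continuous image of a compact set is compact and the saturation of a compact set is compact saturated, it is enough that $\{c_j:j\ge m\}$ be a compact subset of $X$. I would obtain this by choosing the $c_m$ inside a closed subset $A'$ of $\overline A$ that is minimal among the closed sets meeting all of $W_1,W_2,\dots$, with $c_m\in A'\cap W_m$, and then running the covering argument of Lemma~\ref{countable minimal compact}: for an open cover of $\{c_j:j\ge m\}$ with no finite subcover, every member $V$ of the cover is avoided by infinitely many $c_j$, so $A'$ meets $W_j\setminus V$ for every $j$ (here the $W_j$ are decreasing), and minimality of $A'$ forces $A'\subseteq X\setminus V$; then $\{c_j:j\ge m\}\subseteq A'$ misses the whole cover, a contradiction. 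Securing such an $A'$ is the technical heart: because the complements $X\setminus W_m$ need not be compact, the usual Zorn argument must be done with care, e.g. via the topological Rudin Lemma (Lemma~\ref{t Rudin}) applied to a suitable irreducible family of compact saturated sets lying inside the $W_m$, together with the fact that $\ua_X a\subseteq W_m$ whenever $a\in W_m$.

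\emph{Conclusion.} The family $(K_m)_m$ is then a countable filtered family in $\mk(Y)$, with $f(c_m)\in K_m$ and, since $c_m\in A'\subseteq\overline A$, also $f(c_m)\in f(\overline A)\subseteq\overline{f(A)}$; moreover $\bigcap_m K_m\subseteq\bigcap_j\ua_Y f(c_j)\subseteq\bigcap_n\ua_Y f(a_n)$ by the displayed inclusion. If $\bigcap_n\ua_Y f(a_n)\cap\overline{f(A)}=\emptyset$, then $\bigcap_m K_m\subseteq Y\setminus\overline{f(A)}$, which is open, so the $\omega$-well-filteredness of $Y$ yields some $m$ with $K_m\subseteq Y\setminus\overline{f(A)}$, contradicting $f(c_m)\in K_m\cap\overline{f(A)}$. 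Hence $\bigcap_n\ua_Y f(a_n)\cap\overline{f(A)}\neq\emptyset$. The main obstacle, as indicated, is exactly the compactness of the $K_m$, i.e. arranging the minimality of $A'$ strongly enough for the covering argument to go through.
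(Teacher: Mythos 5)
There is a genuine gap, and it sits exactly where you flag it: the compactness of the sets $K_m$. Your plan is to secure it by choosing the $c_m$ inside a closed set $A'\subseteq\overline A$ that is \emph{minimal} among closed sets meeting all the $W_m$, and then running the covering argument of Lemma~\ref{countable minimal compact}. But nothing guarantees that such an $A'$ exists. The Zorn argument behind minimal members of $M(\cdot)$ works because, for a filtered chain of closed sets each meeting a \emph{compact} set $K$, the intersection still meets $K$; your $W_m$ are open, not compact, so a descending chain of closed sets each meeting every $W_m$ can have intersection missing some $W_m$, and Lemma~\ref{t Rudin} does not apply to a family of open sets. Your suggested repair --- replacing the $W_m$ by compact saturated sets inside them --- is circular: the natural candidates $\ua c_m$ must already form a filtered (irreducible) family in $P_S(X)$ for Rudin's Lemma to bite, and arranging that is essentially the problem you are trying to solve. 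Note also that in the paper the hypothesis ``$A\in m(\{U_n\})$'' of Lemma~\ref{countable minimal compact} is only ever obtained from first-countability of the \emph{sobrification} $X^s$ (Lemma~\ref{local base compact}), which is strictly stronger than first-countability of $X$ (Example~\ref{first-countable omega WF is not sober}); so this route cannot work from the hypotheses of the present lemma.

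The paper's proof gets compactness without any minimality, by making the choice of the $c_n$ self-referential: using a decreasing base $\{U_k(x)\}_k$ at \emph{every} point, it picks
$c_{n}\in \bigcap_{j=1}^{n-1}U_{n}(c_j)\cap\bigcap_{j=1}^{n} U_{n}(a_j)\cap A$,
i.e.\ each new point lies in the $n$-th basic neighbourhood of all \emph{previously chosen} $c_j$ as well as of $a_1,\dots,a_n$ (possible since $A$ is irreducible and each of these open sets meets $\overline A$). This forces every open set containing some $c_n$ to absorb a whole tail of the sequence, so the tails $\{c_k:k\ge n\}$ are compact outright, and the same clustering property is what justifies $\bigcap_n\ua f(K_n)\subseteq\ua f(c_m)$ (your asserted inclusion $\bigcap_m K_m\subseteq\bigcap_j\ua_Y f(c_j)$ also needs this and is not automatic for an arbitrary choice of the $c_j$, although the weaker inclusion into $\bigcap_n\ua f(a_n)$ that you actually need can be recovered from your $W_j\subseteq B^n_{m_0}$ estimate by a saturation argument). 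Your outer framework --- descending compact saturated sets in $Y$, then $\omega$-well-filteredness applied to $Y\setminus\overline{f(A)}$ --- matches the paper's Claims 2--5; what is missing is the diagonal choice of the $c_n$, and without it the argument does not close.
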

\begin{proof}  For each $x\in X$, since $X$ is first-countable, there is an open neighborhood base $\{U_n(x) : n\in \mn\}$ of $x$ such that

$$U_1{(x)}\supseteq U_2{(x)}\supseteq\ldots\supseteq U_k{(x)}\supseteq\ldots, $$

\noindent that is,  $\{U_n(x) : n\in \mn\}$ is a decreasing sequence of open subsets.

 For each $(n, m)\in \mn\times \mn$, since $a_n\in \overline{A}$ and $A\in \ir (X)$, we have $A\cap \bigcap\limits_{i=1}^{m}U_{l_i}(a_{k_i})\cap A\neq\emptyset$ for all $\{(l_i, k_i)\in \mn\times \mn : 1\leq i\leq m\}$.

Choose $c_1\in  U_1{(a_1)}\cap \cap A$.
Now suppose we already have a set $\{c_1,c_2,\ldots, c_{n-1}\}$ such that
for each $2\leq i\leq n-1$,
$$c_{i}\in \bigcap\limits_{j=1}^{i-1}U_{i}(c_j)\cap\bigcap\limits_{j=1}^{i} U_{i}(a_j)\cap A.$$
Note that above condition implies that for any positive integer $1\leq k\leq n-1$,
$$\bigcap\limits_{j=1}^{k}U_{k}(c_j)\cap\bigcap\limits_{j=1}^{k} U_{i}(a_j)\cap A\neq \emptyset,\mathrm{~and~}$$
 $$\bigcap\limits_{j=1}^{n-1}U_{n}(c_j)\cap\bigcap\limits_{j=1}^{n} U_{n}(a_j)\cap A\neq \emptyset.$$
So we can choose $c_{n}\in \bigcap\limits_{j=1}^{n-1}U_{n}(c_j)\cap\bigcap\limits_{j=1}^{n} U_{n}(a_j)\cap A$. By $A\in \ir (X)$ again, we have
$$\bigcap\limits_{j=1}^{n}U_{n}(c_j)\cap\bigcap\limits_{j=1}^{n} U_{n}(a_j)\cap A\neq \emptyset.$$

By induction, we can obtain a set $\{c_n: n\in\mn\}$.

Let $K_n=\ua\{c_k:k\geq n\}$ for each $n\in\mn$.

{\bf Claim 1:} $\forall n\in\mn$, $K_n\in \mathsf{K}(X)$.

Suppose $\{V_i:i\in I\}$ is an open cover of $K_n$, i.e., $K_n\subseteq \bigcup_{i\in I}V_i$. Then there is $i_0\in I$ such that $c_n\in V_{i_0}$, and thus there is $m\geq n$ such that $c_n\in U_m(c_n)\subseteq V_{i_0}$. It follows that $c_k\in U_m(c_n)\subseteq V_{i_0}$ for all $k\geq m$. Thus $\{c_k: k\geq m\}\subseteq V_{i_0}$. For each $c_k$, where $n\leq k< m$, choose a $V_{i_k}$ such that $c_k\in V_{i_k}$.
Then the finite family $\{V_{i_k}:n\leq k< m\}\cup \{V_{i_0}\}$  covers $K_n$. So $K_n$ is compact.

{\bf Claim 2:} $\{\ua f(K_n) : n\in \mn\}\subseteq \mathsf{K}(Y)$ and $\ua f(K_1)\supseteq \ua f(K_2) \supseteq ... \supseteq \ua f(K_n) \supseteq \ua f(K_{n+1})\supseteq ...$

For each $n\in \mn$, since $K_m\in \mathsf{K}(X)$ and $f$ is continuous, we have $\ua f(K_m)\in \mathsf{K}(Y)$. Clearly, $K_1\supseteq K_2 \supseteq ... \supseteq K_n \supseteq K_{n+1}\supseteq ...$, and hence $\ua f(K_1)\supseteq \ua f(K_2) \supseteq ... \supseteq \ua f(K_n) \supseteq \ua f(K_{n+1})\supseteq ...$

{\bf Claim 3:} $\bigcap_{n\in \mn}\ua f(K_n)=\bigcap_{n\in \mn}\ua f(c_n)$.

Clearly, $\bigcap_{n\in \mn}\ua f(c_n)\subseteq \bigcap_{n\in \mn}\ua f(K_n)$. Now we show $\bigcap_{n\in \mn}\ua f(K_n)\subseteq\ua f(c_m)$ for all $m\in \mn$. Suppose $V\in \mathcal O(Y)$ with $f(c_m)\in V$. Then $c_m\in f^{-1}(V)\in \mathcal O(X)$, and whence $U_{n(m)}(c_m)\subseteq f^{-1}(V)$ for some $n(m)\in \mn$. For any $l\geq \mathrm{max}\{m, n(m)\}+1$, we have $K_l\subseteq U_l(c_m)\subseteq U_{n(m)}(c_m)\subseteq f^{-1}(V)$, and consequently, $\ua f(K_l)\subseteq V$. It follows that $\bigcap_{n\in \mn}\ua f(K_n)\subseteq \bigcap_{f(c_m)\in V\in \mathcal O(Y)} V=\ua f(c_m)$. Thus $\bigcap_{n\in \mn}\ua f(K_n)\subseteq \bigcap_{n\in \mn}\ua f(c_n)$.

{\bf Claim 4:} $\bigcap_{n\in \mn}\ua f(K_n)\subseteq \bigcap_{n\in \mn}\ua f(a_n)$.

For $m\in \mn$ and $W\in \mathcal O(Y)$ with $f(a_m)\in W$. Then $a_m\in f^{-1}(W\in \mathcal O(X)$, and whence $U_{k(m)}(a_m)\subseteq f^{-1}(W)$ for some $k(m)\in \mn$. For any $l\geq \mathrm{max}\{m, k(m)\}$, we have $K_l\subseteq U_l(a_m)\subseteq U_{k(m)}(a_m)\subseteq f^{-1}(W)$, and consequently, $\ua f(K_l)\subseteq W$. It follows that $\bigcap_{n\in \mn}\ua f(K_n)\subseteq \bigcap_{f(a_m)\in W\in \mathcal O(Y)} W=\ua f(a_m)$. Thus $\bigcap_{n\in \mn}\ua f(K_n)\subseteq \bigcap_{n\in \mn}\ua f(a_n)$.

{\bf Claim 5:} $\bigcap_{n\in \mn}\ua f(K_n)\in \mathsf{K}(Y)$ and $\bigcap_{n\in \mn}\ua f(K_n)\cap \overline{f(A)}\neq \emptyset$.

By Claim 2 and the $\omega$-well-filteredness of $Y$, $\bigcap_{n\in \mn}\ua f(K_n)\in \mathsf{K}(Y)$. Now we show $\bigcap_{n\in \mn}\ua f(K_n)\cap \overline{f(A)}\neq \emptyset$. Assume, on the contrary, that $\bigcap_{n\in \mn}\ua f(K_n)\cap \overline{f(A)}=\emptyset$ or, equivalently, $\bigcap_{n\in \mn}\ua f(K_n)\subseteq Y\setminus\overline{f(A)}$. Then by the $\omega$-well-filteredness of $Y$ and Claim 2, $\ua f(K_n)\subseteq Y\setminus \overline{f(A)}$, which is in contradiction with $\{c_m : m\geq n\}\subseteq A\cap K_n$. Therefore, $\bigcap_{n\in \mn}\ua f(K_n)\cap \overline{f(A)}\neq \emptyset$.
\end{proof}

\begin{corollary}\label{first-countable omega WF is omega-directed} In a first-countable $\omega$-well-filtered space $X$, every irreducible closed subset of $X$ is countably-directed. Therefore, $X$ is an $\omega^\ast$-$\mathbf{DC}$ space.
\end{corollary}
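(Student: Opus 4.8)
The plan is to obtain the result as an immediate application of Lemma~\ref{first-countable omega-directed} with the target space $Y$ taken to be $X$ itself and $f$ the identity map $\mathrm{id}_X$. Since $X$ is assumed first-countable and $\omega$-well-filtered, $\mathrm{id}_X : X \to X$ is a continuous map from a first-countable $T_0$ space into an $\omega$-well-filtered space, so the hypotheses of the lemma are met.

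Fix $A \in \ir_c(X)$. Irreducible sets are nonempty by definition, so $A \neq \emptyset$. Let $S$ be any nonempty countable subset of $A$ and enumerate it (with repetitions if $S$ is finite) as $S = \{a_n : n \in \mn\} \subseteq A = \overline{A}$. Applying Lemma~\ref{first-countable omega-directed} to $A \in \ir(X)$ and this family, and noting that $\overline{\mathrm{id}_X(A)} = \overline{A} = A$, we obtain $\bigcap_{n \in \mn} \ua a_n \cap A \neq \emptyset$. Any $x$ in this intersection satisfies $x \in A$ and $a_n \leq_X x$ for all $n$, i.e., $x$ is an upper bound of $S$ lying in $A$. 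Hence $A$ is countably-directed in the sense of Definition~\ref{countably-directed}.

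For the last assertion, observe that $A = \overline{A}$ is the closure of the countably-directed set $A$, so $A \in \mathcal D_c^{\omega^\ast}(X)$; thus $\ir_c(X) \subseteq \mathcal D_c^{\omega^\ast}(X)$. The reverse inclusion is routine and holds for every $T_0$ space: a countably-directed set is directed, every directed subset of $X$ is irreducible, and the closure of an irreducible set is irreducible and closed, so $\mathcal D_c^{\omega^\ast}(X) \subseteq \ir_c(X)$. Therefore $\ir_c(X) = \mathcal D_c^{\omega^\ast}(X)$, i.e., $X$ is an $\omega^\ast$-$\mathbf{DC}$ space. I do not expect any genuine difficulty here: the entire substance sits in Lemma~\ref{first-countable omega-directed}, and the only minor points to watch are handling a finite $S$ (by allowing a repeated enumeration) and recording the trivial inclusion $\mathcal D_c^{\omega^\ast}(X) \subseteq \ir_c(X)$.
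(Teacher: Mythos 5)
Your proposal is correct and is exactly the intended derivation: the paper states this as an immediate corollary of Lemma~\ref{first-countable omega-directed}, obtained by specializing to $Y=X$ and $f=\mathrm{id}_X$, just as you do. The two small points you flag (repeated enumeration for finite countable subsets, and the trivial inclusion $\mathcal D_c^{\omega^\ast}(X)\subseteq\ir_c(X)$ via countably-directed $\Rightarrow$ directed $\Rightarrow$ irreducible) are handled correctly.
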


By Remark \ref{sober implies WF implies d-space}, Proposition \ref{omega ast d-space charac} and Corollary \ref{first-countable omega WF is omega-directed}, we get the following result.

\begin{theorem}\label{first-countable omega-WF omega-d-space}
	For a first-countable $T_0$ space $X$, the following conditions are equivalent:
\begin{enumerate}[\rm (1)]
	\item $X$ is a sober space.
	\item $X$ is a well-filtered space.
	\item $X$ is an $\omega$-well-filtered $d$-space.
    \item $X$ is an $\omega$-well-filtered $\omega^\ast$-$d$-space.
\end{enumerate}
\end{theorem}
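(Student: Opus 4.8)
The plan is to establish the cycle $(1)\Rightarrow(2)\Rightarrow(3)\Rightarrow(4)\Rightarrow(1)$, with the only substantive input being Corollary \ref{first-countable omega WF is omega-directed}; the other three implications are just packaging of facts recorded earlier in the paper, so I expect them to cost almost nothing.

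For $(1)\Rightarrow(2)$ I would simply invoke the first arrow of Remark \ref{sober implies WF implies d-space}: sobriety implies well-filteredness. For $(2)\Rightarrow(3)$: a well-filtered space is automatically $\omega$-well-filtered, because Definition \ref{omega-WF space} only constrains \emph{countable} filtered families of compact saturated sets and is therefore a weakening; and a well-filtered space is a $d$-space by the second arrow of Remark \ref{sober implies WF implies d-space}. For $(3)\Rightarrow(4)$: every $d$-space is an $\omega^\ast$-$d$-space. Indeed, a $d$-space $X$ is a dcpo, hence an $\omega^\ast$-dcpo since every countably-directed subset is directed; and $\mathcal{O}(X)\subseteq\sigma(X)\subseteq\sigma_{\omega^\ast}(X)$ by the inclusion $\sigma(P)\subseteq\sigma_{\omega^\ast}(P)$ noted after Definition \ref{omega ast Scott topology}. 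Thus condition (3) of Proposition \ref{omega ast d-space charac} holds, which gives $(4)$ (the $\omega$-well-filtered part of $(4)$ being shared with $(3)$).

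The heart of the argument is $(4)\Rightarrow(1)$, and this is where first-countability is used. Let $X$ be first-countable, $\omega$-well-filtered and an $\omega^\ast$-$d$-space, and fix $A\in\ir_c(X)$. By Corollary \ref{first-countable omega WF is omega-directed}, $A$ is countably-directed in the specialization order, i.e.\ $A\in\mathcal{D}^{\omega^\ast}(X)$, so $A=\overline{A}\in\mathcal{D}_c^{\omega^\ast}(X)$. Since $X$ is an $\omega^\ast$-$d$-space, condition (2) of Proposition \ref{omega ast d-space charac} yields $\mathcal{D}_c^{\omega^\ast}(X)=\mathcal{S}_c(X)$, so $A=\overline{\{x\}}$ for some $x\in X$, and $x$ is unique because $X$ is $T_0$. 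Hence every irreducible closed subset of $X$ has a unique generic point, that is, $X$ is sober. (Equivalently, Corollary \ref{first-countable omega WF is omega-directed} makes $X$ an $\omega^\ast$-$\mathsf{DC}$ space, so sobriety also follows directly from the equivalence $(1)\Leftrightarrow(3)$ of Proposition \ref{sober equiv using omega ast DC}.)

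The only genuine obstacle lies inside Corollary \ref{first-countable omega WF is omega-directed} — equivalently inside Lemma \ref{first-countable omega-directed}, where one inductively builds a countable chain $\{c_n\}$ inside an irreducible closed set, assembles the compacta $K_n=\ua\{c_k:k\ge n\}$, and pushes the filtered family $\{\ua f(K_n)\}$ through the $\omega$-well-filteredness of the codomain — but that work is already done. Given it, the present theorem is a routine assembly of Remark \ref{sober implies WF implies d-space}, Proposition \ref{omega ast d-space charac} and Corollary \ref{first-countable omega WF is omega-directed}, so I anticipate no further difficulty.
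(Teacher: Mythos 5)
Your proposal is correct and follows essentially the same route as the paper, whose proof is precisely the one-line assembly of Remark \ref{sober implies WF implies d-space}, Proposition \ref{omega ast d-space charac} and Corollary \ref{first-countable omega WF is omega-directed}; you have merely made explicit the easy links (well-filtered $\Rightarrow$ $\omega$-well-filtered, $d$-space $\Rightarrow$ $\omega^\ast$-$d$-space) and the decisive step $(4)\Rightarrow(1)$ via countable-directedness of irreducible closed sets plus the generic-point characterization of $\omega^\ast$-$d$-spaces. No gaps.
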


A first-countable $d$-space may not be sober as shown in the following example.

\begin{example}\label{first-countable d-space is not sober}
	Let $X$ be a countably infinite set and $X_{cof}$ the space equipped with the \emph{co-finite topology} (the empty set and the complements of finite subsets of $X$ are open). Then
\begin{enumerate}[\rm (a)]
    \item $\mathcal C(X_{cof})=\{\emptyset, X\}\cup X^{(<\omega)}$, $X_{cof}$ is $T_1$ and hence a $d$-space.
    \item $\mk (X_{cof})=2^X\setminus \{\emptyset\}$.
    \item $X_{cof}$ is first-countable.
    \item $X_{cof}$ is locally compact and hence a Rudin space by Theorem \ref{LCrudin}.
    \item $X_{cof}$ is non-sober. $\mathcal{K}_X=\{X\setminus F : F\in X^{(<\omega)}\}\subseteq\mk (X_{cof})$ is countable filtered and $\bigcap \mathcal{K}_X=X\setminus \bigcup X^{(<\omega)}=X\setminus X=\emptyset$, but $X\setminus F\neq\emptyset$ for all $ F\in X^{(<\omega)}$. Thus $X_{cof}$ is not $\omega$-well-filtered, and consequently, $X_{cof}$ is not sober by Theorem \ref{first-countable d-space is not sober}.
\end{enumerate}
\end{example}

The following example shows that a first-countable $\omega$-well-filtered space need not to be sober.

\begin{example}\label{omega-d-space needed}
	Let $L$ be the complete chain $[0,\omega_1]$. Then
\begin{enumerate}[\rm (a)]
\item $\Sigma_{\omega}L$ is a first-countable.
\item $\sigma (P)\neq \sigma_{\omega} (L)$. Since sups of all countable families of countable ordinal numbers are still countable ordinal numbers, we have that $\{\omega_1\}\in\sigma_{\omega}(L)$ but $\{\omega_1\}\notin\sigma (L)$ (note that $\omega_1=\mathrm{sup}~[0, \omega_1)$).

\item $\sigma (L)\neq \sigma_{\omega^\ast} (L)$. It is easy to check that $[\omega, \omega_1]\in \sigma_{\omega^\ast} (L)$ but $[\omega, \omega_1]\notin \sigma (L)$ (note that $\omega=\mathrm{sup}~\mn$).
\item $\mathsf{K}(\Sigma_{\omega}L)=\{\ua \alpha : \alpha\in [0,\omega_1]\}$. For $K\in \mathsf{K}$, we have $\mathrm{inf}~K\in K$, and hence $K=\ua \mathrm{inf}~K$.
 \item $\Sigma_{\omega}L$ is not an $\omega$-Rudin space. It is easy to check that $[0,\omega_1)\in\ir_c (\Sigma_{\omega}L)$ (note that $\{\omega_1\}\in\sigma_{\omega}(L)$). If $[0,\omega_1)\in \mathsf{RD}_\omega(\Sigma_{\omega}L)$, then by (d), there is a countable subset $\{\alpha_n :n\in \mn\}\subseteq [0, \omega_1)$ such that $[0,\omega_1)\in m(\{\ua \alpha_n :n\in \mn\})$. Let $\beta=\mathrm{sup}\{\alpha_n :n\in \mn\}$. Then $\beta\in [0,\omega_1)$, and hence $\da \beta\in \mathcal{C}(\Sigma_{\omega}L)$ and $\da \beta\in M(\{\ua \alpha_n :n\in \mn\})$, which is in contradiction with $[0,\omega_1)\in m(\{\ua \alpha_n :n\in \mn\})$.
    \item $\Sigma_{\omega}L$ is $\omega$-well-filtered. Suppose that $\{\ua \alpha_n : n\in \mn\}\subseteq \mathsf{K}(\Sigma_{\omega}L)$ is countable filtered and $U\in \sigma_{\omega}(L)$ with $\bigcap_{n\in \mn} \ua \alpha_n\subseteq U$. Let $\alpha=\mathrm{sup}\{\alpha_n : n\in\mn\}$. Then $\{\alpha_n : n\in\mn\}$ is a countable directed subset of $L$ and $\alpha\in U$ since $\ua \alpha=\bigcap_{n\in \mn} \ua \alpha_n\subseteq U$. It follows that $\alpha_n\in U$ or, equivalently, $\ua \alpha_n\subseteq U$ for some $n\in \mn$. Thus $\Sigma_{\omega}L$ is $\omega$-well-filtered, and hence an $\omega$-$d$-space.
 \item $\Sigma_{\omega}L$ is not well-filtered. $\{\ua t : t\in [0, \omega_1)\}\subseteq \mathsf{K}(\Sigma_{\omega}L)$ is filtered and $\bigcap_{t\in [0, \omega_1)}\ua t=\{\omega_1\}\in \sigma_{\omega}(L)$, but $\ua t\nsubseteq \{\omega_1\}$ for all $t\in [0, \omega_1)$. Therefore, $\Sigma_{\omega}L$ is not well-filtered.
\item $\Sigma_\omega L$ is not a $d$-space. $[0,\omega_1)\in\ir_c (\Sigma_{\omega}L)$ and $[0,\omega_1)$ is directed, but $[0,\omega_1)\neq\mathrm{cl}_{\sigma_\omega(L)}\{\alpha\}=[0, \alpha]$ for all $\alpha\in L$. Thus $\Sigma_\omega L$ is not a $d$-space.
\item $\Sigma_\omega L$ is not an $\omega^\ast$-$d$-space. In fact, by (b), $\{\omega_1\}\in\sigma_{\omega}(L)$ but $\{\omega_1\}\notin\sigma_{\omega^\ast}(L)$, and hence by Theorem \ref{omega ast d-space charac}, $\Sigma_{\omega}L$ is not an $\omega^\ast$-$d$-space.
  \end{enumerate}

 Since $\Sigma_{\omega}L$ is not well-filtered, it is non-sober. So in Theorem \ref{first-countable omega-WF omega-d-space}, condition (4) (and so condition (3)) cannot be weakened to the condition that $X$ is only an $\omega$-well-filtered space.	

\end{example}

 \begin{definition}\label{countable generating compact saturated subset} Let $X$ be a first-countable $T_0$ space, $A\in \ir (X)$ and $\{a_n : n\in\mn\}$.  The countable family $\{K_n : n\in \mn\}\subseteq \mathsf{K}(X)$ obtained in the proof of Lemma \ref{first-countable omega-directed} is called a decreasing sequence of compact saturated subsets related to $\{a_n : n\in \mn\}$.
 \end{definition}

\begin{theorem}\label{first-countable image is Rudin}  Suppose that $X$ is a first-countable $T_0$ space, $Y$ is an $\omega$-well-filtered space and $f: X \rightarrow Y$ is a continuous mapping. Then for any $A\in \ir (X)$, $\overline{f(A)}\in \mathbf{RD}(Y)$.
\end{theorem}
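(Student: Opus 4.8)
The plan is to exhibit explicitly a filtered family $\mathcal K\subseteq\mathsf K(Y)$ for which $\overline{f(A)}\in m(\mathcal K)$; since $\overline{f(A)}$ is closed and nonempty, this is exactly what Definition \ref{rudinset} asks for. The family is built from the construction hidden in the proof of Lemma \ref{first-countable omega-directed}. For every nonempty countable subset $S=\{a_n:n\in\mathbb N\}$ of $A$ (enumerate with repetitions if $S$ is finite), apply Lemma \ref{first-countable omega-directed} and its proof: one obtains a sequence $\{c^S_n:n\in\mathbb N\}\subseteq A$ and the associated decreasing family $\{K^S_n:n\in\mathbb N\}\subseteq\mathsf K(X)$ of Definition \ref{countable generating compact saturated subset}. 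Putting $Q_S:=\bigcap_{n\in\mathbb N}\uparrow_Y f(K^S_n)$, Claims 3, 4 and 5 of that proof give $Q_S=\bigcap_{n\in\mathbb N}\uparrow_Y f(c^S_n)\in\mathsf K(Y)$, $Q_S\subseteq\bigcap_{a\in S}\uparrow_Y f(a)$, and $Q_S\cap\overline{f(A)}\neq\emptyset$. Fix one such choice of data for each $S$ and set $\mathcal K=\{Q_S:S\text{ a nonempty countable subset of }A\}$, which is nonempty because $A\neq\emptyset$.

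Next I would check the three properties. That $\overline{f(A)}\in M(\mathcal K)$ is immediate, since $Q_S\cap\overline{f(A)}\neq\emptyset$ for every $S$. For filteredness: given $Q_{S_1},Q_{S_2}\in\mathcal K$, put $S_3=S_1\cup S_2\cup\{c^{S_1}_n:n\in\mathbb N\}\cup\{c^{S_2}_n:n\in\mathbb N\}$, again a nonempty countable subset of $A$ (each $c^{S_i}_n\in A$); then $Q_{S_3}\subseteq\bigcap_{a\in S_3}\uparrow_Y f(a)\subseteq\bigcap_{n}\uparrow_Y f(c^{S_1}_n)\cap\bigcap_{n}\uparrow_Y f(c^{S_2}_n)=Q_{S_1}\cap Q_{S_2}$, so $Q_{S_3}$ lies below $Q_{S_1}$ and $Q_{S_2}$ in set inclusion, i.e. $\mathcal K$ is a filtered subfamily of $\mathsf K(Y)$. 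For minimality: if $B\in\mathcal C(Y)$ and $B\in M(\mathcal K)$, then for every $a\in A$ the member $Q_{\{a\}}\in\mathcal K$ meets $B$, and since $Q_{\{a\}}\subseteq\uparrow_Y f(a)$ we get $\uparrow_Y f(a)\cap B\neq\emptyset$; as $B$ is a lower set, $f(a)\in B$. Hence $f(A)\subseteq B$ and $\overline{f(A)}\subseteq B$. Thus $\overline{f(A)}$ is in fact the least element of $M(\mathcal K)$, in particular a minimal one, so $\overline{f(A)}\in m(\mathcal K)$ and therefore $\overline{f(A)}\in\mathsf{RD}(Y)$. (By Lemma \ref{irrimage}, $\overline{f(A)}$ is moreover irreducible closed, though this is not needed.)

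Almost all the effort is already spent inside Lemma \ref{first-countable omega-directed}: first-countability of $X$ drives the inductive choice of the points $c^S_n$, and $\omega$-well-filteredness of $Y$ is precisely what makes $Q_S$ a compact saturated set of $Y$ that still meets $\overline{f(A)}$ (Claim 5). The one genuinely delicate point here is orienting the Smyth order correctly and choosing $S_3$ large enough: it must absorb the auxiliary points $c^{S_1}_n,c^{S_2}_n$ — not merely $S_1\cup S_2$ — so that $\mathcal K$ is genuinely filtered (downward directed under inclusion) and not just upward directed; the minimality half then comes essentially for free from $Q_{\{a\}}\subseteq\uparrow_Y f(a)$.
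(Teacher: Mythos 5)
Your proposal is correct and follows essentially the same route as the paper's own proof: both build the filtered family from the compact sets $\bigcap_{n}\uparrow f(K_n)$ produced by the construction in Lemma \ref{first-countable omega-directed}, obtain filteredness by feeding an enlarged countable set (you take the union of $S_1,S_2$ and the auxiliary points $c^{S_i}_n$; the paper interleaves the two auxiliary sequences) back into that construction, and derive minimality from the fact that every closed set meeting the family must contain each $f(a)$ (you via $Q_{\{a\}}\subseteq\uparrow f(a)$ from Claim~4, the paper by placing $\uparrow f(a)$ itself in the family through the constant sequence). The differences are cosmetic, and your argument is complete.
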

\begin{proof}  Let $\mathcal{K}_A=\{\bigcap_{n\in \mn}\ua f(K_n) : \{K_n : n\in \mn\}$ is a decreasing sequence of compact saturated subsets related to a countable set $\{a_n : n\in \mn\}\subseteq A\}$. Then by the proof of Lemma \ref{first-countable omega-directed}, we have

{$\mathbf{1}^{\circ}$}  $\mathcal{K}_A\neq\emptyset$ and $\mathcal{K}_A\subseteq \mathsf{K}(Y)$.

{$\mathbf{2}^{\circ}$}  $\ua f(a)\in\mathcal{K}_A$ for all $a\in A$.

For $\{a_n : n\in\mn\}\subseteq A$ with $a_n\equiv a$, as carrying out in the proof of Lemma \ref{first-countable omega-directed}, choose $c_n\equiv a$ for all $n\in \mn$. Then $K_1=K_2= ... =K_n=...=\ua a$, and hence $\ua f(a)=\ua f(\ua a)=\bigcap_{n\in \mn}\ua f(K_n)\in \mathcal{K}_A$.

{$\mathbf{3}^{\circ}$}  $\mathcal{K}_A$ is filtered.

Suppose that $\{K_n : n\in \mn\}$ and $\{G_n : n\in \mn\}$ are decreasing sequences of compact saturated subsets related to countable sets $\{a_n : n\in \mn\}\subseteq A$ and $\{b_n : n\in \mn\}\subseteq A$, respectively. By the proof of Lemma \ref{first-countable omega-directed}, for each $n\in \mn$, $K_n=\ua \{c_m : m\geq n\}$ and $G_n=\ua \{d_m : m\geq n\}$, where $\{c_n : n\in\mn\}\subseteq A$ and $\{d_n : n\in\mn \}\subseteq A$ are obtained by the choice procedures (by induction) in the proof of Lemma \ref{first-countable omega-directed} related to $\{a_n : n\in \mn\}\subseteq A$ and $\{b_n : n\in \mn\}\subseteq A$, respectively. Consider $\{s_n : n\in\mn \}=\{c_1, d_1, c_2, d_2, ..., c_n, d_n, ...\}\subseteq A$, that is,
$$s_n=
	\begin{cases}
	c_k& n=2k+1\\
	d_k& n=2k.
	\end{cases}$$

Then by the proof of Lemma \ref{first-countable omega-directed}, we can inductively choose a countable set $\{t_n : n\in \mn\}$ such that

$$t_{n}\in \bigcap\limits_{j=1}^{n-1}U_{n}(t_j)\cap\bigcap\limits_{j=1}^{n} U_{n}(s_j)\cap A \mathrm{~for all~} n\in\mn.$$

For each $n\in \mn$, let $H_n=\ua \{t_m : m\geq n\}$. Then by Claim 1 and Claim 2 in the proof of Lemma \ref{first-countable omega-directed}, $\ua f(H_n)\in \mathcal{K}_A$. By Claim 3 and Claim 4 in the proof of Lemma \ref{first-countable omega-directed}, we have $\bigcap_{n\in \mn}\ua f(H_n)=\bigcap_{n\in \mn}\ua f(t_n)\subseteq \bigcap_{n\in \mn}\ua f(s_n)=\bigcap_{n\in \mn}\ua f(c_n)\cap \bigcap_{n\in \mn}\ua f(d_n)=\bigcap_{n\in \mn}\ua f(K_n)\cap \bigcap_{n\in \mn}\ua f(G_n)$. Thus $\mathcal{K}_A$ is filtered.

{$\mathbf{4}^{\circ}$}  $\overline{f(A)}\in M(\mathcal{K}_A)$.

By Claim 5 in the proof of Lemma \ref{first-countable omega-directed}, $\overline{f(A)}\in M(\mathcal{K}_A)$.

{$\mathbf{5}^{\circ}$}  $\overline{f(A)}\in m(\mathcal{K}_A)$.

If $B$ is a closed subset with $B\in M(\mathcal{K}_A)$, then for each $a\in A$, by $2^{\circ}$, we have $\ua f(a)\cap B\neq\emptyset$, and hence $f(a)\in B$. It follows that $f(A)\subseteq B$. Thus $\overline{f(A)}\in m(\mathcal{K}_A)$.

By $1^{\circ}$, $3^{\circ}$ and $5^{\circ}$, $\overline{f(A)}\in \mathbf{RD}(Y)$.
\end{proof}

\begin{theorem}\label{first-countable is WD} Every first-countable $T_0$ space is a well-filtered determined space.
\end{theorem}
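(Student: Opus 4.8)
The plan is to establish the nontrivial inclusion $\ir_c(X)\subseteq \wdd(X)$; the reverse inclusion $\wdd(X)\subseteq \ir_c(X)$ holds in every $T_0$ space by Proposition \ref{DRWIsetrelation}. So fix $A\in\ir_c(X)$ and a continuous map $f:X\longrightarrow Y$ into a well-filtered space $Y$. I must produce a point $y_A\in Y$ with $\overline{f(A)}=\overline{\{y_A\}}$; uniqueness of such a point is automatic because $Y$ is $T_0$.

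First I would observe that a well-filtered space is automatically $\omega$-well-filtered, since a countable filtered family of compact saturated sets is in particular a filtered family. Hence Theorem \ref{first-countable image is Rudin} applies to $f:X\longrightarrow Y$ and yields $\overline{f(A)}\in \mathsf{RD}(Y)$; that is, there is a filtered family $\mathcal K\subseteq \mk(Y)$ with $\overline{f(A)}\in m(\mathcal K)$. (In fact the family $\mathcal K_A$ constructed in the proof of Theorem \ref{first-countable image is Rudin} is countable, but this extra information is not needed here.)

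It remains to use the general fact that in a well-filtered space every Rudin set is the closure of a point. I would argue this inline. Put $B=\overline{f(A)}$ and let $\mathcal K\subseteq\mk(Y)$ be a filtered family with $B\in m(\mathcal K)$. Since $B\in M(\mathcal K)$ we have $K\cap B\neq\emptyset$ for every $K\in\mathcal K$; if $\bigcap\mathcal K\cap B=\emptyset$, then $\bigcap\mathcal K\subseteq Y\setminus B\in\mathcal O(Y)$, and well-filteredness of $Y$ forces $K\subseteq Y\setminus B$ for some $K\in\mathcal K$, contradicting $K\cap B\neq\emptyset$. So we may choose $y_A\in \bigcap\mathcal K\cap B$. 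Then $\overline{\{y_A\}}\subseteq B$ because $B$ is closed, and $\overline{\{y_A\}}\in M(\mathcal K)$ because $y_A\in K$ for all $K\in\mathcal K$; minimality of $B$ among the members of $M(\mathcal K)$ then gives $B=\overline{\{y_A\}}$. Thus $A$ is a well-filtered determined set, and since $A\in\ir_c(X)$ was arbitrary, $\ir_c(X)\subseteq\wdd(X)$, i.e.\ $X$ is a well-filtered determined space.

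The proof is short precisely because all the real difficulty has already been absorbed into Theorem \ref{first-countable image is Rudin}, whose proof rests on the delicate diagonal induction of Lemma \ref{first-countable omega-directed} (the construction of the $c_n$, the descending sequence $K_n=\ua\{c_k:k\ge n\}$ of compact saturated sets, and Claims 1--5). The only points requiring a little care in the present step are: (i) phrasing the last argument purely via minimality in $M(\mathcal K)$, so that one does not inadvertently demand that $Y$ be sober; and (ii) checking that the $m(\mathcal K)$ appearing in Theorem \ref{first-countable image is Rudin} is the same notion as in Definition \ref{rudinset}, so that ``$\overline{f(A)}$ has the Rudin property'' can be fed directly into the minimality argument above.
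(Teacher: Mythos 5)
Your proof is correct and follows essentially the same route as the paper: both reduce the theorem to Theorem \ref{first-countable image is Rudin} and then extract a generic point of $\overline{f(A)}$ from its Rudin property in the well-filtered space $Y$. The only cosmetic difference is that you prove the last step (a Rudin set in a well-filtered space has a generic point) inline via the minimality argument, whereas the paper outsources it to Proposition \ref{DRWIsetrelation}.
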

\begin{proof} Let $X$ be a first-countable $T_0$ space and $A\in \ir_c (X)$. We need to show $A\in\wdd (X)$. Suppose that $ f : X\longrightarrow Y$ is a continuous mapping from $X$ to a well-filtered space $Y$. By Theorem \ref{first-countable image is Rudin}, $\overline{f(A)}\in \mathbf{RD}(Y)$, and whence by the well-filteredness of $Y$ and Proposition \ref{DRWIsetrelation}, there is a (unique) $y_A\in Y$ such that $\overline{f(A)}=\overline{\{y_A\}}$. Thus $A\in \wdd (X)$.
\end{proof}

In \cite[Example 4.15]{liu-li-wu}, a well-filtered space $X$ but not a Rudin space was given. It is straightforward to check that $X$ is not first-countable.

Finally, based on  Theorem \ref{sobrifcaltion first-countable is Rudin} and Theorem \ref{first-countable is WD}, we  pose the following two natural problems.

\begin{problem}\label{first-countable Rudin}
Is every first-countable $T_0$ space a Rudin space?
\end{problem}

\begin{problem}\label{first-countable omega-well-filtered determined}
Is every first-countable $T_0$ space $\omega$-well-filtered determined?
\end{problem}


\begin{thebibliography}{99}

\bibitem{E_20181}M. Ern\'e, The strength of prime ideal separation, sobriety, and compactness theorems. Topol. Appl. 241 (2018) 263-290.
\bibitem{E_20182} M. Ern\'e, Categories of locally hypercompact spaces and quasicontinuous posets, Appl. Categ. Struct. 26 (2018) 823-854.
\bibitem{redbook} G. Gierz, K. Hofmann, K. Keimel, J. Lawson, M. Mislove, D. Scott, Continuous Lattices and Domains, Encycl. Math. Appl., vol. 93, Cambridge University Press, 2003.
\bibitem{gcont} G. Gierz, J. Lawson, Generalized continuous and hypercontinuous lattices, Rocky Mt. J. Math. 11 (1981) 271-296.
\bibitem{quasicont} G. Gierz, J. Lawson, A. Stralka, Quasicontinuous posets, Houst. J. Math. 9 (1983) 191-208.
\bibitem{Jean-2013} J. Goubault-Larrecq, Non-Hausdorff Topology and Domain Theory, New Mathematical Monographs, vol. 22, Cambridge University Press, 2013.
\bibitem{Heckmann} R. Heckmann, An upper power domain construction in terms of strongly compact sets, in: Lecture Notes in Computer Science, vol. 598, Springer, Berlin Heidelberg New York, 1992, pp. 272-293.
\bibitem{Klause-Heckmann} R. Heckmann, K. Keimel, Quasicontinuous domains and the Smyth powerdomain, Electron. Notes Theor. Comput. Sci. 298 (2013) 215-232.
\bibitem{jia-2018} X. Jia, Meet-Continuity and Locally Compact Sober Dcpos, PhD thesis, University of Birmingham, 2018.
\bibitem{jia-Jung-2016} X. Jia, A. Jung, A note on coherence of dcpos, Topol. Appl. 209 (2016) 235-238.
\bibitem{Keimel-Lawson} K. Keimel, J. Lawson, $D$-completion and $d$-topology, Ann. Pure Appl. Log. 159 (3) (2009) 292-306.\bibitem{Kou}H. Kou, $U_k$-admitting dcpo¡¯s need not be sober, in: Domains and Processes, Semantic Structure on Domain Theory, vol. 1, Kluwer, 2001, pp. 41-50.
\bibitem{Lawson-Xi} J. Lawson, G. Wu, X. Xi, Well-filtered spaces, compactness, and the lower topology, Houst. J. Math. 46 (1) (2020) 283-294.
\bibitem{liu-li-wu} B. Liu, Q. Li, G. Wu, Well-filterifications of topological spaces, Topol. Appl. 279 (2020) 107245.
\bibitem{Rudin} M. Rudin, Directed sets which converge, in: General Topology and Modern Analysis, University of California, Riverside, 1980, Academic Press, 1981, pp. 305-307.
\bibitem{Schalk} A. Schalk, Algebras for Generalized Power Constructions, PhD Thesis, Technische Hochschule Darmstadt, 1993.
\bibitem{Shenchon} C. Shen, X. Xi, X. Xu, D. Zhao, On well-filtered reflections of $T_0$ spaces, Topol. Appl. 267 (2019) 106869.
\bibitem{Wyler} U. Wyler, Dedekind complete posets and Scott topologies, in: Lecture Notes in Mathematics, vol. 871, 1981, pp. 384-389.
\bibitem{xi-zhao-MSCS-well-filtered} X. Xi, D. Zhao, Well-filtered spaces and their dcpo models, Math. Struct. Comput. Sci. 27 (2017) 507-515.
\bibitem{xu-shen-xi-zhao2} X. Xu, C. Shen, X. Xi, D. Zhao, First-countability, $\omega$-well-filtered spaces and reflections, Topol. Appl. 279 (2020) 107255.
\bibitem{xu-shen-xi-zhao1} X. Xu, C. Shen, X. Xi, D. Zhao, On $T_0$ spaces determined by well-filtered spaces, Topol. Appl. 282 (2020) 107323.
\bibitem{xuxizhao} X. Xu, X. Xi, D. Zhao, A complete Heyting algebra whose Scott topology is not sober, Fundam. Math. (2020), in press.
\bibitem{xuzhao} X. Xu, D. Zhao, On topological Rudin¡¯s lemma, well-filtered spaces and sober spaces, Topol. Appl. 272 (2020) 107080.
\bibitem{ZhaoHo}D. Zhao, W. Ho, On topologies defined by irreducible sets, Journal of Logical and
   Algebraic Methods in Programmin. 84(1) (2015) 185-195.
\end{thebibliography}
\end{document}